    \OR\ifentrytype{incollection}\OR\ifentrytype{inproceedings}%
    \OR\ifentrytype{inreference}} {\printtext[title]{%
\DeclareMathOperator{\arccot}{arccot}
\DeclareFontFamily{U}{BOONDOX-calo}{\skewchar\font=45 }
\DeclareFontShape{U}{BOONDOX-calo}{m}{n}{
  <-> s*[1.05] BOONDOX-r-calo}{}
\DeclareFontShape{U}{BOONDOX-calo}{b}{n}{
  <-> s*[1.05] BOONDOX-b-calo}{}
\DeclareMathAlphabet{\mcb}{U}{BOONDOX-calo}{m}{n}
\SetMathAlphabet{\mcb}{bold}{U}{BOONDOX-calo}{b}{n}
\newcommand{\sbullet}{%
  \hbox{\fontfamily{lmr}\fontsize{.6\dimexpr(\f@size pt)}{0}\selectfont\textbullet}}
\newcommand*\bigcdot{\mathpalette\bigcdot@{.5}}
\newcommand*\bigcdot@[2]{\mathbin{\vcenter{\hbox{\scalebox{#2}{$\m@th#1\bullet$}}}}}
\definecolor{darkblue}{rgb}{0.13,0.13,0.39}%
\newtheorem{thm}{Theorem}[section] 
\newtheorem{lem}[thm]{Lemma}
\newtheorem{prop}[thm]{Proposition}
\theoremstyle{definition} 
\newtheorem{rem}[thm]{Remark} 
\newtheorem*{rem*}{Remark}
\newtheorem{assumption}[thm]{Assumption} 
\newcounter{assum}
\newcommand{\head}{{\uptext{head}}}
\newcommand{\BFS}{{\uptext{BFS}}}
\newcommand{\rB}{\uptext{r-B}}
\newcommand{\UC}{\uptext{UC}}
\newcommand{\set}[1]{\llbracket #1 \rrbracket}
\renewcommand{\d}{\mathrm{d}}
\newcommand{\T}{\mathbb{T}}
\newcommand{\V}[1]{\mathbb{V}_{\hspace{-0.1em}#1}}
\newcommand{\B}[1]{\mathbb{B}_{#1}}
\newcommand{\Q}{\mcb{V}}
\newcommand{\Qo}{\mcb{Q}}
\newcommand{\E}{\vartheta}
\newcommand{\G}{G}
\newcommand{\fh}{\mathfrak{h}}
\newcommand{\fn}{\mathfrak{n}}
\newcommand{\xx}{X}
\newcommand{\R}{\mathcal{R}}
\newcommand{\CO}{\mathcal{O}}
\newcommand{\CM}{\mathcal{M}}
\newcommand{\cK}{\mathcal{K}}
\newcommand{\cS}{\mathcal{S}}
\newcommand{\vT}{\vv{T}}
\newcommand{\arrowright}[1]{\parbox{#1}{\tikz{\draw[->](0,0)--(#1,0);}}}
\newcommand{\ra}{\vphantom{m}\arrowright{.15cm}}
\newcommand{\I}{{\rm i}} 
\newcommand{\pp}{\mathbb{P}}
\newcommand{\ee}{\mathbb{E}} 
\newcommand{\rr}{\mathbb{R}}
\newcommand{\nn}{\mathbb{N}} 
\newcommand{\zz}{\mathbb{Z}}
\newcommand{\K}{K}
\newcommand{\Id}{I}
\newcommand{\cc}{{\mathbb{C}}}
\newcommand{\p}{\partial}
\newcommand{\uno}[1]{\mathbf{1}_{#1}}
\newcommand{\ep}{\varepsilon}
\newcommand{\eps}{\varepsilon}
\newcommand{\vs}{\vspace{6pt}}
\newcommand{\qand}{\quad\text{and}\quad}
\newcommand{\qqand}{\qquad\text{and}\qquad}
\DeclareMathOperator{\Ai}{\uptext{Ai}}
\newcommand{\ts}{\hspace{0.1em}}
\newcommand{\tts}{\hspace{0.05em}}
\newcommand{\tsm}{\hspace{-0.1em}}
\newcommand{\bioneref}{\hyperlink{it:biorth4}{\normalfont ($\star$)}\xspace}
\newcommand{\bitworef}{\hyperlink{it:poly4}{\normalfont ($\star\star$)}\xspace}
\newcommand{\bionerefbar}{{\normalfont ($\bar\star$)}\xspace}
\newcommand{\bitworefbar}{{\normalfont ($\bar\star\bar\star$)}\xspace}
\newcommand\RedeclareMathOperator{%
  \@ifstar{\def\rmo@s{m}\rmo@redeclare}{\def\rmo@s{o}\rmo@redeclare}%
}
\newcommand\rmo@redeclare[2]{%
  \begingroup \escapechar\m@ne\xdef\@gtempa{{\string#1}}\endgroup
  \expandafter\@ifundefined\@gtempa
     {\@latex@error{\noexpand#1undefined}\@ehc}%
     \relax
  \expandafter\rmo@declmathop\rmo@s{#1}{#2}}
\newcommand\rmo@declmathop[3]{%
  \DeclareRobustCommand{#2}{\qopname\newmcodes@#1{#3}}%
}
\newcommand{\uptext}[1]{\text{\upshape{#1}}}
\RedeclareMathOperator{\det}{\mathop{\uptext{det}}}
\RedeclareMathOperator{\ker}{\mathop{\uptext{ker}}}
\RedeclareMathOperator{\exp}{\mathop{\uptext{exp}}}
\RedeclareMathOperator{\log}{\mathop{\uptext{log}}}
\RedeclareMathOperator*{\lim}{\mathop{\uptext{lim}}}
\RedeclareMathOperator*{\sup}{\mathop{\uptext{sup}}}
\RedeclareMathOperator*{\limsup}{\mathop{\uptext{lim\hspace{1pt}sup}}}
\RedeclareMathOperator*{\liminf}{\mathop{\uptext{lim\hspace{1pt}inf}}}
\RedeclareMathOperator*{\max}{\mathop{\uptext{max}}}
\RedeclareMathOperator*{\inf}{\mathop{\uptext{inf}}}
\RedeclareMathOperator*{\min}{\mathop{\uptext{min}}}
\RedeclareMathOperator*{\cos}{\mathop{\uptext{cos}}}
\RedeclareMathOperator*{\sin}{\mathop{\uptext{sin}}}
\RedeclareMathOperator*{\tan}{\mathop{\uptext{tan}}}
\RedeclareMathOperator*{\sec}{\mathop{\uptext{sec}}}
\RedeclareMathOperator*{\arccot}{\mathop{\uptext{arccot}}}
\RedeclareMathOperator*{\arg}{\mathop{\uptext{arg}}}
\RedeclareMathOperator*{\mod}{\mathop{~~~\uptext{mod}}~~}
\RedeclareMathOperator{\Re}{\mathop{\uptext{Re}}}
\RedeclareMathOperator{\Im}{\mathop{\uptext{Im}}}
\DeclareMathOperator{\spanning}{\uptext{span}}
\newcommand{\twopii}[1]{\ifthenelse{#1=1}{2\pi\I}{(2\pi\I)^{#1}}}
\newcommand{\SLP}{\mathbb{S}}
\newcommand{\Bpl}{B^+}
\newcommand{\Qpl}{Q^+}
\newcommand{\alphapl}{\alpha^+}
\newcommand{\barQpl}{\bar{Q}^+}
\newcommand{\taupl}{{\tau^+}}
\newcommand{\SM}{\mathcal{S}}
\newcommand{\SN}{\bar{\mathcal{S}}}
\newcommand{\fT}{\mathbf{S}}
\newcommand{\ft}{\mathbf{t}}
\newcommand{\fx}{\mathbf{x}}
\newcommand{\fy}{\mathbf{y}}
\newcommand{\fa}{\mathbf{a}}
\newcommand{\fb}{\mathbf{b}}
\newcommand{\fB}{\mathbf{B}}
\newcommand{\fQ}{\mathbf{Q}}
\newcommand{\fK}{\mathbf{K}}
\newcommand{\fI}{\mathbf{I}}
\newcommand{\ftau}{\bm{\tau}}
\renewcommand{\P}{\chi}
\newcommand{\bP}{\bar{\P}}
\newcommand{\rin}{r}
\newcommand{\rout}{\bar\rin}
\newcommand{\rrin}{\rin}
\newcommand{\rhoin}{\rho}
\newcommand{\rhoout}{\bar\rho}
\DeclareMathOperator{\hypo}{\uptext{hypo}}
\DeclareMathOperator{\epi}{\uptext{epi}}
\newcommand{\catp}{\raisebox{0.1em}{\tikz[y=0.80pt,x=0.80pt,yscale=-1.0,xscale=1.0,inner sep=0pt,outer sep=0pt]%
{
\path[draw=black,line join=bevel,line cap=round,miter limit=4.00,line width=0.280pt] (-240.5382,165.4074) circle (0.0476cm);
\path[draw=black,fill=white,line join=bevel,line cap=round,miter limit=4.00,line width=0.280pt] (-239.2864,165.0583) circle (0.0476cm);
\path[draw=black,fill=white,line join=bevel,line cap=round,miter limit=4.00,line width=0.280pt] (-238.0008,164.9891) circle (0.0476cm);
\path[draw=black,fill=white,line join=bevel,line cap=round,miter limit=4.00,line width=0.280pt] (-236.6079,164.6444) circle (0.0476cm);
\path[draw=black,fill=white,line join=bevel,line cap=round,miter limit=4.00,line width=0.280pt] (-235.2670,164.3894) circle (0.0476cm);
\path[draw=black,fill=white,line join=bevel,line cap=round,miter limit=4.00,line width=0.280pt] (-234.0488,163.8984) circle (0.0476cm);
\path[draw=black,fill=white,line join=bevel,line cap=round,miter limit=4.00,line width=0.280pt] (-233.9610,163.2756) ellipse (0.0096cm and 0.0141cm);
\path[draw=black,line join=round,line cap=round,miter limit=4.00,line width=0.130pt] (-234.3312,162.2005) .. controls (-234.3312,162.2005) and (-234.7325,160.8832) .. (-235.0441,161.2231);
\path[draw=black,line join=round,line cap=round,miter limit=4.00,line width=0.130pt] (-233.5987,162.2260) .. controls (-233.5987,162.2260) and (-233.1974,160.9087) .. (-232.8857,161.2486);
}}}
\def\dash---{\kern.16667em---\penalty\exhyphenpenalty\hskip.16667em\relax}
\numberwithin{equation}{section}
\let\oldmarginpar\marginpar
\renewcommand\marginpar[1]{\-\oldmarginpar[\raggedleft\footnotesize #1]%
  {\raggedright{\small\textsf{#1}}}}
\renewenvironment{quote}{%
  \list{}{%
    \leftmargin0.5cm 
    \rightmargin\leftmargin 
  }%
  \item\relax
}{%
  \endlist
}
\begin{document}

\setcounter{tocdepth}{2}
\setcounter{secnumdepth}{3}
\hypersetup{bookmarksdepth=3}

\protected\def\ignorethis#1\endignorethis{}
\let\endignorethis\relax
\def\TOCstop{\addtocontents{toc}{\ignorethis}}
\def\TOCstart{\addtocontents{toc}{\endignorethis}}

\title{Exact solution of TASEP and variants with inhomogeneous speeds and memory lengths}

\author{Konstantin Matetski} \address[K.~Matetski]{
  Department of Mathematics\\
  Michigan State University\\
  619 Red Cedar Road\\
  East Lansing, MI 48824\\
  USA} \email{matetski@msu.edu}

\author{Daniel Remenik} \address[D.~Remenik]{
  Departamento de Ingenier\'ia Matem\'atica and Centro de Modelamiento Matem\'atico (IRL-CNRS 2807)\\
  Universidad de Chile\\
  Av. Beauchef 851, Torre Norte, Piso 5\\
  Santiago\\
  Chile} \email{dremenik@dim.uchile.cl}

\begin{abstract}
    In \cite{fixedpt,TASEPgeneral} an explicit biorthogonalization method was developed that applies to a class of determinantal measures which describe the evolution of several variants of classical interacting particle systems in the KPZ universality class.
    The method leads to explicit Fredholm determinant formulas for the multipoint distributions of these systems which are suitable for asymptotic analysis.
    In this paper we extend the method to a broader class of determinantal measures which is applicable to systems where particles have different jump speeds and different memory lengths.
    As an application of our results we study three particular examples: some variants of TASEP with two blocks of particles having different speeds, a version of discrete time TASEP which mixes particles with sequential and parallel update, and a version of sequential TASEP with a block of long memory particles placed at the bulk of the system.
\end{abstract}

\maketitle
\tableofcontents

\vskip-40pt
  
\section{Introduction}
\label{sec:intro}

In the (continuous time, one-dimensional) \emph{totally asymmetric simple exclusion process (TASEP)}, particles perform totally asymmetric nearest neighbour random walks on the integer lattice $\zz$ subject to the exclusion rule: each particle independently attempts jumps to the neighbouring site to its right at rate $1$, the jumps being allowed only when the destination site is empty.
Despite its simplicity, TASEP presents a very rich asymptotic behavior, and due to its tractability it has become a paradigmatic model in out-of-equilibrium statistical physics.

Much of the interest in TASEP arises from the central role it plays as a member of the \emph{KPZ universality class}, a broad collection of physical and probabilistic models including particle systems, one-dimensional random growth models, directed polymers, stochastic reaction-diffusion equations, and random stirred fluids.
Models in the KPZ class share a common asymptotic fluctuation behavior, identified by their (in general, conjectural) convergence, under the characteristic KPZ \emph{1:2:3 scaling}, to a universal, scale-invariant Markov process known as the \emph{KPZ fixed point}, which was first constructed in \cite{fixedpt} as the scaling limit of TASEP.
This 1:2:3 scaling refers to the ratios between the exponents used to rescale the fluctuations, space and time: for KPZ models, as $t\to\infty$ one has fluctuations growing like $t^{1/3}$ with non-trivial spatial correlations arising at a scale of $t^{2/3}$.

What makes TASEP special in this context is that its distribution can be expressed as a marginal of an (in general, signed) determinantal point process.
For general initial data, this was first discovered in \cite{sasamoto,borFerPrahSasam} (building on exact determinantal formulas for the transition probabilities of the system derived in \cite{MR1468391} using the coordinate Bethe ansatz), where it was used to study the special case of \emph{periodic} initial data, with particles initially occupying sites at $2\zz$.
There the associated spatial fluctuations in the long time 1:2:3 scaling limit were derived; they lead to the \emph{Airy$_1$ process}, whose marginals are given by the Tracy-Widom GOE distribution from random matrix theory \cite{tracyWidom2}.
For another choice of special initial data known as \emph{step}, where particles initially occupy sites at $\zz_{<0}$, there is an even richer algebraic structure, and the analogous scaling limit had been known since the early 2000's \cite{johanssonShape,prahoferSpohn,johansson}, leading to the \emph{Airy$_2$ process} and Tracy-Widom GUE marginals \cite{tracyWidom}.

The method employed in \cite{sasamoto,borFerPrahSasam} is based on a representation of TASEP through a biorthogonal ensemble, which leads to an expression for the multipoint distribution of TASEP as the Fredholm determinant of a kernel defined implicitly as the solution of a certain biorthogonalization problem which depends on the initial data of the system.
For step initial data, the biorthogonalization turns out to be, in a certain sense, trivial (the functions one needs to biorthogonalize are essentially already orthogonal), while for periodic initial data the authors were able to solve it explicitly.
The solution of the biorthogonalization problem for general initial data was discovered in \cite{fixedpt}, and leads to a kernel which can be expressed in terms of the hitting time of a certain random walk to a curve defined by the initial data.
In the 1:2:3 scaling limit, this kernel naturally rescales to an analogous kernel defined in terms of Brownian hitting times, whose Fredholm determinants yield the finite dimensional distributions of the KPZ fixed point.

\TOCstop

TASEP is part of a family of exactly solvable models in the KPZ class for which a description in terms of biorthogonal ensembles is available.
Besides continuous time TASEP, this family includes discrete time TASEP with both sequential and parallel update, with pushing and blocking dynamics, and with Bernoulli and geometric jumps, as well as several generalizations.
 In \cite{TASEPgeneral} we extended the explicit biorthogonalization method of \cite{fixedpt} to a general class of determinantal measures which includes these models and several others (the method has also been applied in continuous space to systems of one-sided reflected Brownian motions \cite{nqr-RBM}, and more generally in the recent paper \cite{assiotis} to a certain family of one-sided reflected diffusions with polynomial drift and diffusion coefficients).

\subsection*{Description of the main result}

The purpose of this paper is to develop a further generalization of the explicit biorthogonalization method to cover extensions of these models to the case where particles have different speeds and different memory lengths.
By the \emph{speed} of a particle we mean, in the context of continuous time TASEP, simply its jump rate.
The \emph{memory length} of a particle, on the other hand, is easier to interpret in the case of discrete time TASEP: it refers to the amount of time a site remains blocked after a particle occupying it leaves.
Memory lengths $0$ and $1$ translate respectively into the standard discrete time TASEPs with sequential and parallel update.
For more general memory lengths, the system is no longer Markovian, but it can be reinterpreted as a Markovian system of \emph{interacting caterpillars}, which occupy a variable number of sites in the lattice.

Our starting point is the biorthogonal ensemble representation for such systems.
Such a representation for TASEP-like systems in the case of inhomogeneous speeds is well known \cite{bp-push,borodFerSas}.
Those papers focus on two particle systems, PushASEP (a combination of TASEP with blocking and pushing dynamics) and TASEP with parallel update, for which they compute scaling limits in the case of periodic initial data.
They actually obtain more general multipoint distributions along ``space-like paths'' (i.e., the distribution of collections of particles at different times, but subject to a certain ordering in space-time).
As we will explain in the next section, the case of TASEP with general memory lengths, or caterpillars, can be recovered by considering an extension of this setting to one where particles are also allowed to start evolving at different times.
The biorthogonal ensemble representation in this setting was obtained in some generality in \cite{TASEPgeneral}, but the explicit solution of the biorthogonalization problem in that paper was restricted to the case corresponding to equal caterpillar lengths and equal speeds.

Our main goal is thus to complete this program in the general setting of inhomogeneous speeds and memory lengths, by providing an explicit formula for the biorthogonal kernel appearing in these formulas which is amenable to asymptotic analysis.
Our main result, Thm.~\ref{thm:kernel-explicit}, will actually be proved in a more general, abstract setting involving a broad class of kernels constructed out of the solution of a biorthogonalization problem.
This abstract theorem will cover all of the examples we have mentioned so far (and several more).
In the next section we describe the result as it applies to the concrete setting of variants of TASEP with inhomogeneous speeds and memory lengths.

The explicit solution of the biorthogonalization problem in this general setting is still given in terms of a kernel involving a hitting problem for a random walk.
In the case of memory length $0$ (e.g. continuous time TASEP) and equal speeds, the jumps of this random walk have a geometric distribution with some fixed parameter.
The effect of introducing different speeds in the particle system is, perhaps not surprisingly, to make the parameters of the geometric jump distribution of the random walk appearing inside the kernel be time-inhomogeneous, and chosen according to the speed profile of the system.
The effect of considering inhomogeneous memory lengths is more delicate.
In fact, changing the memory length of the system changes, in a certain sense, the nature of the associated biorthogonalization problem, which leads to random walks with a modified jump distribution.
Having inhomogeneous memory lengths ends up leading to a kernel in terms of a random walk with a time-dependent distribution which varies accordingly, but it was far from clear a priori (to us, at least) that this would be so.
In particular, there are some structural difficulties in the biorthogonalization problem which need to be overcome (see Rem. \ref{rem:pfcat}) but, notably, the solution can still be expressed in terms of a random walk hitting problem with the same structure that had appeared earlier.

The availability of these explicit formulas, and in particular the fact that they are given in terms of relatively familiar kernels, opens up the possibility of studying several examples and, in particular, performing asymptotic analysis.
In Sec.~\ref{sec:application} we include three applications to variants of TASEP in continuous and discrete time.
Our goal is mainly to illustrate how the formulas can be applied in a few relatively simple cases, while leaving the more detailed exploration of other interesting examples, which require a more delicate asymptotic analysis, for future work; we will briefly mention a few such possibilities along the way.

Before briefly discussing those examples, we mention that in the particular case of discrete time TASEP with right Bernoulli jumps, the explicit kernel for inhomogeneous speeds was obtained recently, and independently, by \citet{bisiLiaoSaenzZigouras}.
In that paper they also provide a new derivation of the biorthogonal ensemble representation of the system which uses combinatorial properties of the Robinson-Schensted-Knuth correspondence together with intertwining relations to express the transition kernel of the system in terms of an ensemble of non-intersecting lattice paths.

\subsection*{Applications}

Our first application of Thm. \ref{thm:kernel-explicit} is to the two-speed setting studied in \cite{TwoSpeed}.
In that paper, the authors considered continuous time TASEP with a leading block of particles with a different speed.
They obtained explicit contour integral formulas in the case of periodic initial data, for which they were able to perform the asymptotic analysis necessary to describe its limiting behavior depending on the parameters of the model (the length and speed of the leading block), which in the most interesting cases lead to shocks.
Here we will obtain similar formulas for systems in a slightly more general setting which includes continuous and discrete time TASEP with both sequential and parallel update.
The result which we will obtain is for general initial data, and the resulting formulas can be used to perform asymptotic analysis in the general case, but we leave this for future work.

Instead of modifying the particle speeds in a way which leads to shocks as the previous example, one could also perturb them microscopically according to a smooth profile.
It is natural to expect that if the perturbation is chosen appropriately and the scaling is adjusted suitably, the system should still present KPZ fixed point limiting behavior.
This can actually be derived from our formulas, but the inhomogeneity in the speeds makes the asymptotic analysis more challenging, so we also leave this for future work.

In the next application we consider a system based on discrete time TASEP which mixes particles updating sequentially and in parallel (with equal speeds).
We will show that, under the appropriate scaling, the system converges to the KPZ fixed point.
We will do this at the level of pointwise convergence of the kernels, for which we provide a detailed argument; the proof can be upgraded to obtain the full convergence by adapting the arguments in \cite{fixedpt}.
It is worth noting that in this example we work with initial data with general particle density, in contrast with the standard choice of density $\frac12$ usually made in the literature.

The final application we consider is also based on discrete time TASEP.
Now we start with a system updating sequentially, and modify the memory lengths of a macroscopic block of particles located in the bulk. As one might expect, this modification introduces a delay in the system but, once this delay is accounted for in the scaling, the system still converges to the KPZ fixed point. 
A perhaps more natural variant is to place the block of long caterpillars at the beginning of the system; this leads to a more drastic perturbation of the system which introduces additional complications in the asymptotic analysis, and is also left for future work.

\subsection*{Outline}

In Sec.~\ref{sec:examples} we describe several interacting particle systems (and some of their generalizations to systems of caterpillars) in the KPZ universality class, whose distributions are particular cases of the determinantal measure considered in Sec.~\ref{sec:biorth-det}. Under quite general assumptions, Thm.~\ref{thm:biorth_general} states that a marginal of this measure can be written as a Fredholm determinant of a kernel described implicitly through the solution of a biorthogonalization problem. Sec.~\ref{sec:biorth} is devoted to the explicit solution of this problem in an abstract setting, leading to an explicit formula for the kernel in Thm.~\ref{thm:kernel-explicit}. Finally, in Sec.~\ref{sec:application} we study this kernel and its KPZ scaling limit for the particular examples mentioned above.

\subsection*{Notation}

We will use the same notation and conventions employed in \cite{TASEPgeneral}.
We use the standard notation $\nn$ for the set of natural numbers $\{1, 2, \dotsc\}$, and we use $\nn_0 = \nn \cup \{0\}$. For $n \in \nn$ we define the set $\set{n} = \{1, \dotsc, n\}$. For $N \geq 2$ the \emph{Weyl chamber} is
\begin{equation*}
\Omega_{N} = \{\vec x \in \zz^N\!: x_N < x_{N-1} <\dotsm < x_1 \}.
\end{equation*}

Throughout the paper we consider various kernels $K\!:\zz\times\zz\longrightarrow\rr$, which we identify with integral operators acting on suitable families of functions $f\!: \zz \to \cc$ as
\begin{equation}\label{eq:kernel-general}
Kf(x) = \sum_{y \in \zz} K(x,y)f(y).
\end{equation}
We prefer not to specify precisely the domains of such operators and always interpret them in terms of absolutely convergent sums \eqref{eq:kernel-general}. The composition of two such operators $K$ and $L$ is defined as $KL(x,y)=\sum_{z\in\zz}K(x,z)L(z,y)$, provided that the sum is absolutely convergent. Then we say that $K^{-1}$ is an inverse of an operator $K$ if $KK^{-1}(x,y)=K^{-1}K(x,y)=I(x,y)$, where $I$ is the identity operator $I(x,y)=\uno{x=y}$. 
We use the standard notation $K^*(x_1, x_2) = K(x_2,x_1)$ for the adjoint. 

Our kernels will often be defined in terms of functions written as contour integrals. The contours in these integrals will be usually $\gamma_{r}$, a circle in the complex plane with radius $r$ and centered at the origin. Whenever the contour is different, it will be specified explicitly. 

For a closed subset $U$ of $\cc$ we say that a complex function $f$ is analytic on $U$ if it is analytic on some open domain which contains $U$.
A particular case of interest will be when $U$ is the closed annulus on the complex plane centered at the origin and with radii $0<r<R$, which we denote by $A_{r,R}$. 

Finally, for a fixed vector $a\in\rr^m$ and indices $n_1<\dotsm<n_m$ we let 
\begin{equation}\label{eq:defChis}
\P_a(n_j,x)=\uno{x>a_j} \qqand \bP_a(n_j,x)=\uno{x\leq a_j},
\end{equation}
which we also regard as multiplication operators acting on the space $\ell^2(\{n_1,\dotsc,n_m\}\times\zz)$.

\TOCstart
\section{Motivating examples: interacting particle systems}
\label{sec:examples} 

The main result of this paper will be stated in Sec.~\ref{sec:biorth} in an abstract setting which, in general, does not necessarily originate from determinantal measures connected to particle systems.
In order to motivate that setting and to provide some physical intuition, we begin by presenting in this section some particular cases of that result, stated in the context of the variants of TASEP and systems of interacting caterpillars with inhomogeneous jump speeds and lengths (equivalently, discrete time TASEPs with inhomogeneous speeds and memory lengths) discussed in the introduction.
We will begin by presenting the general formula which we will obtain for the multipoint distribution of this type of systems.
At this stage we will not be precise about the details of and assumptions on the systems to which this result will apply.
Later on we will introduce particular cases corresponding to several particle systems and systems of caterpillars to which the result will apply.
The precise setting for our general result will be provided in Secs.~\ref{sec:biorth-det} and \ref{sec:biorth}.

A \emph{(forward) caterpillar} of length $L\geq 1$ is an element $\xx$ of the set
\begin{equation*}
\cK^{\ra}_L=\big\{(\xx^1,\dotsc,\xx^L)\in\zz^L\!: \xx^{i}-\xx^{i+1}\in\{0,1\},\,i \in \set{L-1}\big\}.
\end{equation*}
A caterpillar thus has $L$ ordered sections $\xx^{1} \geq \xx^{2} \geq \dotsm \geq \xx^{L}$; we will call $\xx^{1}$ the \emph{head} of the caterpillar.
A system of $N \geq 2$ interacting caterpillars of lengths $\vec L=(L_1, \ldots, L_N) \geq 1$ takes values in the set
\begin{equation*}%
\Omega^{\catp}_{N, \vec L}=\big\{\xx = (\xx(1), \dotsc, \xx(N))\!: \xx(i)\in\cK^{\ra}_{L_i}\!:\xx^1(i+1) < \xx^{L_i}(i),\,i \in \set{N-1}\big\},
\end{equation*}
i.e., the caterpillar $\xx(i)$ has length $L_i$ and no two caterpillars overlap. For $\xx\in\Omega^{\catp}_{N, \vec L}$ we define $\xx^{\head} = (\xx^1(i) : i \in \set{N}) \in \Omega_N$ to be the vector of heads of the caterpillars, which can be thought of as $N$ particles located at the sites $\xx^1(i)$ for $i \in \set{N}$.

Now for fixed speeds $v_i > 0$, $i\in\set{N}$, we will consider certain specific dynamics for caterpillars $\xx_t \in \Omega^{\catp}_{N, \vec L}$ in time $t$, which is either in $\rr_+$ or in $\nn_0$.
The simplest example is the case of continuous time TASEP, where all caterpillars have length $1$ and the $i$-th one jumps to the right at rate $v_i$ except that jumps onto already occupied sites are forbidden.
We provide below other examples of dynamics of caterpillars to which our results are applicable; those with lengths $2$ or more all evolve in discrete time (we remark that there is also a generalized version of continuous time TASEP which has the flavor of a length-$2$ system, but its definition does not quite fit the setting of this section, although it can be analyzed in the framework of Sec.~\ref{sec:biorth}, see \cite[Sec. 3.3]{TASEPgeneral}).

We say that the system of caterpillars $\xx_t$ has \emph{initial condition} $\vec y \in \Omega_N$ if $\xx_0 \in \Omega^{\catp}_{N, \vec L}$ is given by $\xx^1_0(k) = \dotsm = \xx^{L_k}_0(k) = y_k$ for each $k \in\ \set{N}$; in words, the $k$-th caterpillar starts with all its sections at $y_k$.
With a little ambiguity, we will write in this case $\xx_0 = \vec y \in \Omega_N$.
Throughout the paper we will be restricted to work in the case when the initial condition $\vec y$ is in the set
\begin{equation}\label{eq:Omega-kappa}
\Omega_{N}(\vec L) = \{\vec x \in \zz^N\!:x_{i} - x_{i+1} \geq (L_i-1)\vee1\uptext{ for }i = 1, \dotsc, N-1\}.
\end{equation}

The following holds for all of the systems of caterpillars considered in this paper: for fixed initial condition $\vec y\in \Omega_{N}(\vec L)$, for any $t \geq 0$ and $1 \leq n_1 < \cdots < n_m \leq N$, and for any real $a_1, \ldots, a_m$, the distribution function of the heads of the system can be written in the form
\begin{subequations}\label{eqs:caterpillars}
\begin{equation}\label{eq:caterpillars}
\pp\bigl(\xx^{\head}_t(i) > a_i,\; i = 1, \ldots, m\bigr) = \det \bigl(\Id- \bP_{a}  \K_t \bP_{a}  \bigr)_{\ell^2(\{n_1, \ldots, n_m\} \times \zz)},
\end{equation}
where the kernel $\K_t$ is given implicitly in terms of the solution of a certain biorthogonalization problem which involves the initial data $\vec y$.
The determinant on the right hand side of \eqref{eqs:caterpillars} is the Fredholm determinant on the Hilbert space $\ell^2(\{n_1, \ldots, n_m\} \times \zz)$ (see \cite[Sec.~2]{quastelRem-review} for a brief overview of the Fredholm determinant, or \cite{simon} for a comprehensive treatment).

The precise form of the biorthogonal kernel $\K_t$ is presented in Sec.~\ref{sec:biorth-det}.
We will explain shortly one way to interpret the restriction to $\xx_0=\vec y$ with $\vec y\in\Omega_{N}(\vec L)$ in our setting.
The restriction actually arises as a requirement in the proof of this representation (which for systems of caterpillars can be found in \cite{TASEPgeneral}), but it appears in general to be necessary for it to hold.
The same restriction will be crucial in the proof of our main abstract result, Thm.~\ref{thm:kernel-explicit}, from which the results presented in this section will be corollaries.

Our main result will provide an explicit formula for the kernel $\K_t$ appearing in \eqref{eq:caterpillars}.
The formula follows from solving explicitly the biorthogonalization problem defining the kernel for general initial condition and representing the result in terms of a hitting problem for a certain random walk to a curve defined by the initial data.
This representation is such that the appropriate scaling limits can be obtained naturally, by computing the limits of the kernels involved in the formula and recognizing that the random walk hitting problem converges to a similar problem for a Brownian motion; we present examples of this in Secs.~\ref{sec:caterpillars-mix} and \ref{sec:caterpillar-block} (see also \cite{fixedpt} where the scheme was implemented in detail for continuous time TASEP).

In order to state our formula we first need to make several definitions. 
Consider a meromorphic function $\varphi\!:U\longrightarrow\cc$, where the domain $U\subseteq\cc$ contains $0$ and all values $v_i$, which is analytic and non-zero in an annulus $A_{\rrin,\rout}$ with radii $0<\rrin<\min v_i$ and $\rout > \max v_i$.
Fix also a real parameter $\theta\in(\rrin,\min v_i)$.
We introduce the kernels 
\begin{equation}
Q_{(\ell, n]}(x,y) = \frac{1}{2\pi\I}\oint_{\gamma_\rrin}\d w\,\frac{\theta^{x-y}}{w^{x-y - n + \ell + 1}} \prod_{i = \ell+1}^n \frac{\alpha_i \varphi(w)^{L_{i-1}-1}}{v_i-w}
\end{equation}
with $\alpha_i = \frac{v_{i}-\theta}{\theta} \varphi(\theta)^{1 - L_{i-1}}$, integer $0 \leq \ell < n$ and $L_0 = 1$, and 
\begin{equation}
\Qpl_{(\ell, n]}(x,y) = \frac{1}{2\pi\I}\oint_{\gamma_\rrin}\d w\,\frac{\theta^{x-y}}{w^{x-y - n + \ell + 1}} \prod_{i = \ell+1}^n \frac{\alphapl_i \varphi(w)^{L_{i}-1}}{v_i-w},
\end{equation}
with $\alphapl_i = \frac{v_{i}-\theta}{\theta}\varphi(\theta)^{1 - L_{i}}$. 
These two kernels are Markov.
We let $\Bpl_m$ be the time-inhomogeneous random walk which has transitions from time $m-1$ to time $m$, $m\geq1$, with step distribution $\Qpl_{(m-1, m]}$.
For a fixed initial condition $\vec y \in \Omega_{N}(\vec L)$ we define the stopping time
\begin{equation}
\taupl= \min\{m=0,\dotsc,N-1 : \Bpl_m> y_{m+1}\},
\end{equation}
i.e., $\taupl$ is the hitting time of the strict epigraph of the ``curve'' $(y_{m+1})_{m=0,\dotsc,n-1}$ by the random walk $(\Bpl_m)_{m\geq0}$ (we set $\taupl=\infty$ if the walk does not go above the curve by time $N-1$). 

Next for integer $n \geq 1$ and $0 \leq m < n$ and for a real $t \geq 0$ we define the kernels
  \begin{align}
\SM_{-n}(x, y) & = \frac{1}{2\pi\I}\oint_{\gamma_\rrin}\d w\,\frac{\theta^{y-x}}{w^{y-x + n + 1}} \varphi(w)^t \frac{\prod_{i = 1}^n(v_i-w)}{\prod_{i = 1}^{n} \alphapl_i\prod_{i = 1}^{n-1}\varphi(w)^{L_i - 1}}, \nonumber \\
\SN_{(m, n]}(x, y)  &= -\frac{1}{2\pi\I}\oint_{\Gamma_{\vec v}} \d w\,\frac{\theta^{x-y}}{w^{x-y - n + m + 1}} \varphi(w)^{-t} \frac{\prod_{i = m+1}^{n}\alphapl_i\prod_{i = m+1}^{n-1} \varphi(w)^{L_i - 1}}{\prod_{i=m+1}^n(v_i-w)},
\end{align}
and 
\begin{equation}
\SN^{\epi(\vec y)}_{n}(x, y) = \ee_{\Bpl_{0} = x} \bigl[\SN_{(\taupl, n]}(\Bpl_{\taupl}, y) \uno{\taupl < n}\bigr].
\end{equation}

We can now state our formula for the kernel $\K_t$ appearing in the Fredholm determinant formula for the multipoint distribution of the caterpillar heads \eqref{eq:caterpillars}.
Recall that we are considering a fixed initial condition $\vec y\in \Omega_{N}(\vec L)$ and we have $\pp\bigl(\xx^{\head}_t(i) > a_i,\; i = 1, \ldots, m\bigr) = \det \bigl(\Id- \bP_{a}  \K_t \bP_{a}  \bigr)_{\ell^2(\{n_1, \ldots, n_m\} \times \zz)}$ for $t\geq 0$, $1 \leq n_1 < \cdots < n_m \leq N$, and $a_1, \ldots, a_m\in\rr$.
Our result, which is valid for all the systems of caterpillars considered in this paper, is that the kernel $\K_t$ is given by
\begin{equation}\label{eq:caterpillars-Kt}
\K_t(n_i,x_i;n_j, x_j) = -Q_{(n_i, n_j]} (x_i, x_j) \uno{n_i<n_j} + (\SM_{-n_i})^* \SN^{\epi(\vec y)}_{n_j} (x_i, x_j).
\end{equation}
\end{subequations}
This formula will appear as particular cases of Thm.~\ref{thm:kernel-explicit}, which computes the correlation kernel for a general class of determinantal measures (see the comments at the beginning of Sec.~\ref{sec:application}).

Next we provide examples of particle systems and caterpillars for which the formula \eqref{eqs:caterpillars} holds. 
The proof of the propositions stated below is explained at the beginning of Sec.~\ref{sec:application}.

\subsection{Continuous time TASEP}

In \emph{continuous time TASEP} with inhomogeneous speeds one has $N$ particles $\xx_t(1)>\xx_t(2)>\dotsm>\xx_t(N)$ evolving as follows: the $i$-th particle tries to make unit jumps to the right at rate $v_i>0$, but attempted jumps are permitted only if the destination site is empty.
Except for the exclusion restriction, jumps by different particles occur independently.

\begin{prop}\label{prop:TASEP}
The distribution function of $X_t=\xx^{\head}_t$ for continuous time TASEP is given by \eqref{eq:caterpillars}/\eqref{eq:caterpillars-Kt} with $\varphi(w) = e^w$ and $L_i = 1$ for all $i \in \set{N}$.
\end{prop}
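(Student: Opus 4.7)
The plan is to realize continuous time TASEP with inhomogeneous speeds as the particular case of the general abstract determinantal framework in which the basic contour-integral ingredients of Sec.~\ref{sec:biorth} specialize under the substitutions $\varphi(w)=e^{w}$ and $L_i=1$. Since Thm.~\ref{thm:kernel-explicit} already delivers the explicit biorthogonal kernel $\K_t$ in \eqref{eq:caterpillars-Kt} for any admissible data, the entire task is one of verifying that continuous time TASEP is admissible data with the stated parameters; no new asymptotic or analytic work is required.

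First I would invoke the biorthogonal ensemble representation for continuous time TASEP with inhomogeneous jump rates, which was derived via the coordinate Bethe ansatz in \cite{MR1468391} and cast in determinantal/biorthogonal form in \cite{sasamoto,borFerPrahSasam} and, in the inhomogeneous-speed generality, in \cite{bp-push,borodFerSas}; this representation is also captured as a special case of the abstract measure in Sec.~\ref{sec:biorth-det} (cf.\ the discussion of Thm.~\ref{thm:biorth_general} and the examples assembled in \cite{TASEPgeneral}). This gives the formula \eqref{eq:caterpillars} with some kernel $\K_t$ presented implicitly through the biorthogonalization problem, provided that the basic one-step kernel $Q_{(i-1,i]}$ and the ``time factor'' $\varphi(w)^t$ entering the abstract setting are correctly identified for continuous time TASEP.

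Next I would carry out this identification. Since the system consists of genuine point particles rather than extended caterpillars, one sets $L_i=1$ for every $i\in\set{N}$; in that case $\varphi(w)^{L_{i-1}-1}\equiv 1$ in the definition of $Q_{(\ell,n]}$ and $\alpha_i=(v_i-\theta)/\theta$, and moreover $\Omega_N(\vec L)=\Omega_N$, so no extra restriction on initial data is imposed. The ``time factor'' for continuous time TASEP is dictated by the generator of a rate-$v_i$ Poisson jump process: the single-step transition from the general biorthogonal representation amounts to a contour integral with weight $e^{(v_i-w)t}/(v_i-w)$, which matches $\varphi(w)^{t}/(v_i-w)$ precisely when $\varphi(w)=e^{w}$. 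This function is entire and non-zero on any annulus $A_{\rrin,\rout}$, so the analyticity hypotheses behind Thm.~\ref{thm:kernel-explicit} are automatic.

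With $\varphi(w)=e^{w}$ and $L_i=1$ identified, Thm.~\ref{thm:kernel-explicit} yields \eqref{eq:caterpillars-Kt} verbatim, completing the proof. The only real step requiring care is the alignment of the biorthogonal representations from \cite{bp-push,borodFerSas,TASEPgeneral} with the abstract data of Sec.~\ref{sec:biorth-det}, namely tracking the normalizing constants $\alpha_i,\alphapl_i$ and the contour $\gamma_\rrin$ (which must enclose $0$ but no $v_i$), and checking that the hypothesis $\theta\in(\rrin,\min v_i)$ can be met. This is essentially bookkeeping rather than a substantive obstacle, since the relevant contour-integral identities are already present in the Bethe-ansatz derivation.
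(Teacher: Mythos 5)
Your proof follows essentially the same route as the paper: cite the biorthogonal/determinantal representation of the multipoint distribution (from \cite{TASEPgeneral} and the antecedent works), identify the abstract data as $\psi(w)=\varphi(w)^t$ with $\varphi(w)=e^w$, $a_\ell\equiv 1$ (equivalently $L_i=1$, $\kappa_i=0$), note that the analyticity hypotheses are automatic, and then invoke Thm.~\ref{thm:kernel-explicit} to produce \eqref{eq:caterpillars-Kt}. This matches the paper's own proof, which is given succinctly at the start of Sec.~\ref{sec:application} together with Rem.~\ref{rem:caterpillars}.
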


\subsection{Discrete time TASEPs with right Bernoulli jumps}
\label{sec:right-Bernoulli}

Next we introduce \emph{discrete time TASEP with right Bernoulli jumps} and with inhomogeneous speeds.
There are two natural variants of this model: \emph{sequential} and \emph{parallel update}.
Fix \emph{speed} parameters $p_i \in (0,1)$, $i\in\set{N}$.
Again we have particles occupying $\zz$ at locations $\xx_t(1)>\xx_t(2)>\dotsm>\xx_t(N)$.
Now to go from time $t$ to time $t+1$, particles are updated one by one, from right to left in the sequential case and from left to right in the parallel case, as follows: the $i$-th particle jumps to the right with probability $p_i$ and stays put with probability $q_i=1-p_i$, but if a particle tries to jump on top of an occupied site, the transition is blocked.
Note that in the case of sequential update, a particle trying to jump at time $t$ is blocked by the position of its right neighbor at time $t+1$, while in the case of parallel update the particle is blocked by its neighbor at time $t$. 

\begin{prop}
The distribution function of $X_t=\xx^{\head}_t$ for discrete time TASEP
with right Bernoulli jumps is given by \eqref{eq:caterpillars}/\eqref{eq:caterpillars-Kt} with $\varphi(w) = 1+ w$ and $v_i = p_i / q_i$, and with $L_i = 1$ for all $i \in \set{N}$ in the case of sequential update and $L_i = 2$ for all $i \in \set{N}$ in the parallel case.
\end{prop}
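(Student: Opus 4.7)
The plan is to derive this as a corollary of the abstract result Thm.~\ref{thm:kernel-explicit}, which delivers the Fredholm determinant formula \eqref{eqs:caterpillars} for any system whose multipoint distribution admits the biorthogonal ensemble representation set up in Sec.~\ref{sec:biorth-det} with parameters $(\varphi, \vec v, \vec L)$. The task thus reduces to (i) producing such a representation for discrete time TASEP with right Bernoulli jumps in both update rules, and (ii) identifying the parameters. Since the paper defers the proof to Sec.~\ref{sec:application}, the work here is largely one of matching.

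For the sequential case, which is a genuine Markov chain on $\Omega_N$ with well-known explicit transition probabilities (see \cite{MR1468391} and, for the extension to inhomogeneous speeds, \cite{bp-push,borodFerSas,TASEPgeneral}), one recognizes the biorthogonal ensemble structure directly with $L_i=1$ for all $i$. The identifications $\varphi(w)=1+w$ and $v_i=p_i/q_i$ arise from the generating function of a single Bernoulli step $q_i\delta_0+p_i\delta_{+1}=q_i+p_iw=q_i(1+v_iw)$: after absorbing the $q_i$ factors into the normalization, the pole structure of the kernel is exactly the $(v_i-w)^{-1}$ appearing in $Q_{(\ell,n]}$, while the $t$-fold iterated evolution produces the $\varphi(w)^t=(1+w)^t$ factor in $\SM_{-n}$. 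The analyticity hypotheses on $\varphi$ (non-vanishing on an annulus $A_{\rrin,\rout}$ containing all $v_i$'s) are satisfied by $\varphi(w)=1+w$ with any choice $\rrin<\min v_i$ (and $\rrin<1$) and $\rout>\max v_i$, and the constraint $\vec y\in\Omega_N(\vec L)$ is just $\vec y\in\Omega_N$.

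For parallel update, the rule is not Markovian in $\vec x_t$ alone because blocking depends on right neighbors at time $t$ rather than $t+1$. Following the strategy of \cite{TASEPgeneral}, we enlarge the state by recording the previous position: set $\xx(i)=(\xx_t(i),\xx_{t-1}(i))$ as a length-$2$ caterpillar with head $\xx^1(i)=\xx_t(i)$, tail $\xx^2(i)=\xx_{t-1}(i)$, and difference $\xx^1(i)-\xx^2(i)\in\{0,1\}$. The non-overlap condition $\xx^1(i+1)<\xx^2(i)$ correctly encodes parallel blocking: since $\xx_{t-1}(i+1)<\xx_{t-1}(i)$ and each particle advances by at most one, one has $\xx_t(i+1)\leq\xx_{t-1}(i+1)+1\leq\xx_{t-1}(i)$, and in the adjacent case $\xx_{t-1}(i+1)=\xx_{t-1}(i)-1$ particle $i+1$ is blocked by the site occupied by $i$ at time $t-1$, so the inequality is strict. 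This enlarged process is Markovian, its one-step kernel has the required determinantal product structure, and the extra $\varphi(w)^{L_{i-1}-1}=1+w$ factor in $Q_{(\ell,n]}$ for $L_{i-1}=2$ accounts precisely for the extra memory variable carried by each length-$2$ caterpillar.

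The main obstacle is verifying the biorthogonal ensemble representation for the parallel-update system rewritten as length-$2$ caterpillars with inhomogeneous speeds; this amounts to repeating the derivation in \cite{TASEPgeneral} (carried out there for equal speeds) while inserting the particle-dependent weights $p_i,q_i$ and tracking how they distribute among the Cauchy-type factors in the contour integral representation of the transition kernel. Once this is in place, Thm.~\ref{thm:kernel-explicit} applies with the stated parameters and yields \eqref{eqs:caterpillars}, proving both cases of the proposition simultaneously.
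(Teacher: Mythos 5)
Your proposal is correct and follows essentially the same route as the paper: both reduce the proposition to the biorthogonal-ensemble representation \eqref{eq:mu_fixed} (deferring the detailed verification for inhomogeneous speeds to the argument of \cite{TASEPgeneral}, which the paper explicitly says "extends to general case without changes"), identify $\varphi(w)=1+w$, $v_i=p_i/q_i$, $L_i\in\{1,2\}$ and hence $\psi(w)=\varphi(w)^t$, $a_i(w)=\varphi(w)^{L_i-1}$, $\kappa_i=L_i-1$, and then invoke Thms.~\ref{thm:biorth_general} and \ref{thm:kernel-explicit}. Your framing of the parallel case as an augmented state $(\xx_t(i),\xx_{t-1}(i))$ is the same length-$2$ caterpillar picture the paper uses (phrased directly rather than via Lem.~\ref{lem:TASEP_and_caterpillars}), and your check that non-overlap is preserved is a correct, if minor, elaboration.
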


\subsection{Caterpillars with right Bernoulli jumps}
\label{sec:caterpillars-rB}

Now for fixed parameters $p_i \in (0,1)$, $i\in\set{N}$, we define a dynamics for caterpillars $\xx_t \in \Omega^{\catp}_{N, \vec L}$ in discrete time $t\in\nn_0$.
The transition from time $t$ to time $t+1$ occurs in the following way, with the positions of the caterpillars being updated consecutively for $i\in\set{N}$ (i.e., from right to left):
\begin{itemize}[leftmargin=0.5cm]
  \item The head of the $i$-th caterpillar makes a unit step to the right with probability $p_i \in (0,1)$ (i.e., $\xx^1_{t+1}(i) = \xx^1_t(i) + 1$), provided that the destination site is empty. Otherwise it stays put (i.e., $\xx^1_{t+1}(i) = \xx^1_t(i)$).
  \item The remaining sections of the $i$-th caterpillar move according to $\xx^j_{t+1}(i) = \xx^{j-1}_t(i)$, $j = 2, \dotsc, L_i$.
\end{itemize}
In words, the heads jump as in TASEP with right Bernoulli jumps, but are blocked by the whole caterpillar to its right, while each of the remaining sections of each caterpillar follows the movement of the section to its right in the previous time step.
One sees directly that the new configuration $\xx_{t+1}$ is again in $\Omega^{\catp}_{N, \vec L}$ and that this choice of dynamics defines a Markov chain on $\Omega^{\catp}_{N, \vec L}$.

It is easy to see from the definition of its dynamics that the heads in this system of caterpillars evolve as a version of discrete time TASEP, with right to left update, where particle $i$ at time $t$ is blocked by particle $i-1$ according to its location at time $t-L_{i-1}$, which provides the interpretation of caterpillars as encoding the memory lengths of the system.

Based on the last observation, it is natural to couple the model with a version of sequential TASEP with different starting times. 
In this extension of TASEP we fix starting times $0 \geq T_1 \geq T_2 \geq \dotsm \geq T_N$ and an initial configuration of particles $\vec{y} \in \Omega_N$,
and run the process with particle $i$ starting its evolution at $\xx^{\rB}_{T_i}(i)=y_i$ at time $T_i$.
In other words, from time $T_N$ to time $T_{N-1}$ only the $N$-th particle moves with the other particles staying put, then at time $T_{N-1}$ particle $N-1$ starts moving, and the two move together up to time $T_{N-2}$, when particle $N-2$ joins them, and so on.
Throughout its evolution, particle $i$ jumps to the right with probability $p_i$, provided that the target site is empty.
The coupling between the models is given in the following result (which for constant $L_j$ appeared as Lem.~2.1 in \cite{TASEPgeneral}), and follows directly from the definitions of the two models: %

\begin{lem}\label{lem:TASEP_and_caterpillars}
Let the process $\xx^{\rB}_t$ start at initial times $\vT = (-\sum_{1 \leq j < k}(L_j-1))_{k \in \set{N}}$ and at a configuration $\vec y \in \Omega_N(\vec L)$. Define for each $k\in\set{N}$ and $i \in \set{L_k}$
\begin{equation}
\xx^i_t(k) = \xx^{\rB}_{t - \sum_{1 \leq j < k}(L_j-1)-i+1}(k).
\end{equation}
Then $\xx_t \in \Omega^{\catp}_{N, \vec L}$ is distributed as the system of interacting caterpillars of lengths $\vec L$ described above, with initial condition $\vec y$.
\end{lem}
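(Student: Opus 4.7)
The plan is to verify three properties of the candidate process $\xx_t$ defined by the shifted reading of $\xx^{\rB}$: (a) that it takes values in $\Omega^{\catp}_{N,\vec L}$, (b) that at $t=0$ it coincides with the caterpillar initial condition associated to $\vec y$, and (c) that its one-step transition law reproduces the caterpillar dynamics of Section~\ref{sec:caterpillars-rB}. Throughout, extend $\xx^{\rB}_s(k)=y_k$ for $s\leq T_k$ so that all shifted time arguments are well defined, and abbreviate $\sigma_k=\sum_{1\leq j<k}(L_j-1)$, so that $T_k=-\sigma_k$.

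For (a), writing the intra-caterpillar difference gives $\xx^i_t(k)-\xx^{i+1}_t(k)=\xx^{\rB}_{s+1}(k)-\xx^{\rB}_s(k)\in\{0,1\}$ (with $s=t-\sigma_k-i$), since each particle in the Bernoulli TASEP increments by $0$ or $1$ per time step. For the inter-caterpillar inequality $\xx^1_t(k+1)<\xx^{L_k}_t(k)$, a short rearrangement of constants using $\sigma_{k+1}=\sigma_k+(L_k-1)$ shows that both sides are evaluations of $\xx^{\rB}$ at the common time $s=t-\sigma_{k+1}$, reducing the inequality to the TASEP exclusion $\xx^{\rB}_s(k+1)<\xx^{\rB}_s(k)$, which holds for all $s\geq T_{k+1}$ (even during $T_{k+1}\leq s<T_k$, when particle $k$ is frozen at $y_k$ and blocks particle $k+1$). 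Property (b) is immediate: for $t=0$ the shifted time is $T_k-i+1\leq T_k$, so the extension convention gives $\xx^i_0(k)=y_k$ for every section, recovering the stated initial condition.

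The substantive step is (c). The tail shifts $\xx^j_{t+1}(k)=\xx^{j-1}_t(k)$ for $j\geq 2$ are a definition-chasing identity, since both sides expand to $\xx^{\rB}_{t-\sigma_k-j+2}(k)$. For the head, set $s=t-\sigma_k$ so that $\xx^1_{t+1}(k)-\xx^1_t(k)=\xx^{\rB}_{s+1}(k)-\xx^{\rB}_s(k)$; in the Bernoulli TASEP this is a Bernoulli$(p_k)$ attempt accepted precisely when $\xx^{\rB}_{s+1}(k-1)\neq \xx^{\rB}_s(k)+1$, which is the sequential right-Bernoulli blocking rule. Matching this with the caterpillar blocking rule, under which the head of caterpillar $k$ is blocked iff the already-updated tail $\xx^{L_{k-1}}_{t+1}(k-1)$ equals $\xx^1_t(k)+1$, requires only the substitution
\begin{equation*}
\xx^{L_{k-1}}_{t+1}(k-1)=\xx^{\rB}_{t+1-\sigma_{k-1}-L_{k-1}+1}(k-1)=\xx^{\rB}_{s+1}(k-1),
\end{equation*}
so the two blocking conditions coincide and the jump probabilities are both $p_k$. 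Coupling both chains to the same family of Bernoulli variables then yields the asserted distributional identity.

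The main obstacle, such as it is, is to keep the various shifts straight: the choice $T_k=-\sigma_k$ is engineered so that the calendar time $t+1$ for the freshly updated tail of caterpillar $k-1$ lines up with the Bernoulli clock $s+1$ at which particle $k-1$ determines whether particle $k$ is blocked, while the right-to-left update order of the caterpillars matches the fact that in $\xx^{\rB}$ it is the position of particle $k-1$ at the \emph{new} time $s+1$, not the old time $s$, that enters the blocking condition. Once this pairing is spotted, each verification collapses to a one-line substitution.
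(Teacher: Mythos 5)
Your proof is correct, and it is a careful write-out of exactly the direct verification the paper alludes to when it says the coupling ``follows directly from the definitions of the two models'' without providing a proof. You check state-space membership (intra-caterpillar increments in $\{0,1\}$ via a single Bernoulli step, inter-caterpillar exclusion via the observation that $\xx^1_t(k+1)$ and $\xx^{L_k}_t(k)$ both read $\xx^{\rB}$ at the common time $t-\sigma_{k+1}$), the initial condition (via the natural freezing convention for $s\leq T_k$), and the transition law (tail shift is a tautology; the head's jump probability and blocking condition align after identifying $\xx^{L_{k-1}}_{t+1}(k-1)$ with $\xx^{\rB}_{s+1}(k-1)$, which is precisely the right-neighbor position at the new time that governs sequential blocking). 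The alignment of the right-to-left update orders in the two models is also correctly noted. The only cosmetic remark is that the role of the hypothesis $\vec y\in\Omega_N(\vec L)$, which the paper's surrounding discussion flags as resolving early-time ambiguities in the TASEP with delayed starts, is not made explicit in your argument (you implicitly absorb it into the freezing convention and the well-definedness of the shifted process); this is consistent with the paper's treatment, which likewise does not make it part of a formal proof.
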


The restriction that the initial data $\vec{y}$ for the system of caterpillars be in $\Omega_N(\vec L)$ means, at the level of the initial data of the coupled TASEP particle system, that its starting times and locations have to satisfy $\xx^{\rB}_{T_{i-1}}(i-1)-\xx^{\rB}_0(i)\geq(T_{i-1}-T_{i})\vee1$, which resolves any
ambiguity in the evolution of the particles for small times (as each particle can interact with its right neighbor only after this neighbor has started its evolution).

Lem. \ref{lem:TASEP_and_caterpillars} puts systems of caterpillars with right Bernoulli jumps in the setting of Sec. \ref{sec:biorth-det}, and thus allows us to use the results of Sec. \ref{sec:biorth}.

\begin{prop}
The distribution of the heads $\xx^{\head}_t$ of right Bernoulli caterpillars is given by \eqref{eq:caterpillars}/\eqref{eq:caterpillars-Kt} with $\varphi(w) = 1+ w$, $v_i = p_i / q_i$, and the chosen values of the length parameters $L_i$.
\end{prop}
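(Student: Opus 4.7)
The plan is to reduce the caterpillar statement to the abstract biorthogonalization result Thm.~\ref{thm:kernel-explicit} by first recasting the dynamics as an extended sequential TASEP with staggered starting times. Concretely, Lem.~\ref{lem:TASEP_and_caterpillars} identifies, in distribution,
\begin{equation*}
\xx^{\head}_t(k) = \xx^{\rB}_{t - \sum_{j < k}(L_j-1)}(k),
\end{equation*}
where $\xx^{\rB}$ is the sequential right Bernoulli TASEP in which particle $k$ starts moving at time $T_k = -\sum_{j<k}(L_j-1)$ from position $y_k$. The requirement $\vec y\in\Omega_N(\vec L)$ in \eqref{eq:Omega-kappa} guarantees that this coupled evolution is unambiguous for all small times, exactly as needed to apply the biorthogonal framework of Sec.~\ref{sec:biorth-det}.

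Next, I would invoke the biorthogonal Fredholm determinant representation for the space-like distribution of this extended TASEP, which is the content of \cite{TASEPgeneral} in the sequential Bernoulli setting; this places the joint law of $(\xx^{\rB}_{t_k}(k))_k$ precisely in the determinantal-measure framework of Sec.~\ref{sec:biorth-det}, and hence in the hypotheses of Thm.~\ref{thm:kernel-explicit}. The matching of parameters is then essentially algebraic: the one-step Bernoulli jump distribution $q_i + p_i w$ factors as $q_i(1 + v_i^{-1}\cdot v_i w)$ with $v_i = p_i/q_i$, and once one pulls the Markov normalization $\alpha_i$ through the contour representation the free propagation of particle $i$ over a time increment contributes a factor $\varphi(w) = 1+w$ per step. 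Thus the parameters in the abstract kernels $Q_{(\ell,n]}$, $\Qpl_{(\ell,n]}$, $\SM_{-n}$, $\SN_{(m,n]}$ are forced to be $\varphi(w) = 1+w$ and $v_i = p_i/q_i$, with the memory-length exponents $\varphi(w)^{L_{i-1}-1}$ and $\varphi(w)^{L_i - 1}$ encoding precisely the time lag $L_{i-1}-1$ between consecutive particles dictated by $T_k - T_{k-1}$.

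The main obstacle is the bookkeeping that verifies the two kernels $Q_{(\ell,n]}$ and $\Qpl_{(\ell,n]}$ arise correctly from the extended dynamics, in particular that the asymmetric placement of $L_{i-1}-1$ in one and $L_i - 1$ in the other matches the sequential blocking rule (particle $i$ is blocked by particle $i-1$'s position $L_{i-1}-1$ time-steps in the past). After that is checked, the hitting-time walk $\Bpl_m$ and the stopping time $\taupl$ are exactly the objects solving the biorthogonalization problem for initial data $\vec y\in\Omega_N(\vec L)$, so Thm.~\ref{thm:kernel-explicit} yields \eqref{eq:caterpillars-Kt} directly, and hence the Fredholm determinant formula \eqref{eq:caterpillars} as claimed. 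The verification of the analyticity assumptions of Thm.~\ref{thm:kernel-explicit} is immediate: $\varphi(w) = 1+w$ is entire and nonzero on any annulus $A_{\rrin,\rout}$ with $0 < \rrin < \min_i v_i$ and $\rout > \max_i v_i$, so the contour $\gamma_\rrin$ and parameter $\theta\in(\rrin,\min_i v_i)$ can be chosen freely.
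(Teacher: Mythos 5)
Your proposal follows the paper's own route essentially verbatim. The paper proves this proposition (together with the other propositions in Section 2) via the short argument at the start of Section 4: Lemma~\ref{lem:TASEP_and_caterpillars} couples the caterpillar heads to sequential right Bernoulli TASEP with staggered starting times $T_k=-\sum_{j<k}(L_j-1)$; the arguments of \cite{TASEPgeneral} place the resulting law in the determinantal-measure framework of Section~\ref{sec:biorth-det}, so Theorem~\ref{thm:biorth_general} gives the implicit biorthogonal Fredholm determinant; and Theorem~\ref{thm:kernel-explicit} applied with $\psi(w)=\varphi(w)^t$, $a_i(w)=\varphi(w)^{L_i-1}$, $\kappa_i=L_i-1$, $\varphi(w)=1+w$, $v_i=p_i/q_i$ then yields the explicit kernel, with Remark~\ref{rem:caterpillars} supplying the conjugation identity that reconciles \eqref{eq:K-schutz} with \eqref{eq:KernelK-particular} without affecting the determinant. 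You have all of these ingredients, including the $L_{i-1}$ vs.\ $L_i$ bookkeeping in $Q_{(\ell,n]}$ versus $\Qpl_{(\ell,n]}$.

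One small inaccuracy to qualify: $\varphi(w)=1+w$ is \emph{not} nonzero on every annulus $A_{\rrin,\rout}$ with $0<\rrin<\min_i v_i$ and $\rout>\max_i v_i$, since $\varphi(-1)=0$ and such an annulus will contain $-1$ whenever the speeds $v_i=p_i/q_i$ straddle $1$ (i.e.\ some $p_i<1/2$ and some $p_i>1/2$). One must therefore be able to choose the annulus to avoid $|w|=1$, which is possible exactly when $\max_i v_i<1$ or $\min_i v_i>1$. This requirement is built into Assumption~\ref{a:phi} and Assumption~\ref{assum:apsi}, so it does not break the argument, but the blanket analyticity claim should be replaced with an explicit verification that these assumptions can be satisfied for the chosen $\varphi$ and $\vec v$.
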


\subsection{Other types of caterpillars}

There are four basic variants of TASEP whose transition probabilities have determinantal formulas (of the form \eqref{eq:G} below), corresponding to combinations of Bernoulli and geometric jumps, and blocking and pushing dynamics.
These four variants were described in \cite{MR2469339} in relation to each of the four known variants of the Robinson-Schensted-Knuth (RSK) correspondence: the RSK and Burge algorithms, as well as their dual variants.
The TASEP dynamics considered in the previous two subsections correspond to Bernoulli jumps and blocking dynamics.
In the case of pushing dynamics, particles now jump to the left instead of to the right, updating from right to left, and when a jumping particle lands in an occupied site, the occupying particle is pushed to the left, being forced to jump.
The case of geometric jumps is similar, with particles now jumping according to a geometric distribution, with parameter $q_i \in (0,1)$ for particle $i$; in the case of pushing dynamics particles still update from right to left, but in the blocking case the update is from left to right (i.e., in parallel).

In the two cases with pushing dynamics one can construct corresponding systems of caterpillars (with inhomogeneous speeds and lengths) through a construction which is completely analogous to the one in Sec. \ref{sec:caterpillars-rB}.
The resulting caterpillar dynamics are described in Secs. 2.2 and 2.3 of \cite{TASEPgeneral} in the case of equal speeds and lengths, and can be adapted to the inhomogeneous case straightforwardly.

In the remaining case, right geometric jumps with blocking dynamics, the construction is slightly different, and is  restricted to considering mixtures of particles updating sequentially and in parallel; the construction and resulting dynamics are described in Sec. 2.4 of \cite{TASEPgeneral} for the case of all particles updating sequentially (an analog in this case of all $L_i$'s being equal to $1$), and can be adapted similarly to the inhomogeneous case.

\begin{prop}
The distribution of the heads of the caterpillars $\xx^{\head}_t$ is given in the above cases by \eqref{eq:caterpillars}/\eqref{eq:caterpillars-Kt} with (here $p_i=1-q_i$)
\begin{itemize}
\item $\varphi(w) = 1+ 1 / w$ and $v_i = p_i / q_i$ for left Bernoulli jumps with pushing dynamics,
\item $\varphi(w) = 1 / (1-1/w)$ and $v_i = 1 / q_i$ for left geometric jumps with pushing dynamics,
\item $\varphi(w) = 1 / (1-w)$ and $v_i = q_i$ for right geometric jumps with blocking dynamics,
\end{itemize}
and the chosen values of the length parameters $L_i$.
\end{prop}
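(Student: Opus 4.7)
The plan is to follow exactly the template used for the right-Bernoulli case, reducing each of the three variants to the abstract determinantal setting of Sec.~\ref{sec:biorth-det} and then invoking Thm.~\ref{thm:kernel-explicit}. First I would construct, for each of the variants, a system of interacting caterpillars by direct analogy with Sec.~\ref{sec:caterpillars-rB}: the heads evolve according to the corresponding TASEP-type dynamics (left Bernoulli with pushing, left geometric with pushing, and the sequential/parallel mixture for right geometric with blocking), and each caterpillar's trailing sections record the head's recent trajectory via $\xx^{j}_{t+1}(i)=\xx^{j-1}_t(i)$. For the two pushing dynamics this is a routine adaptation of Secs.~2.2 and 2.3 of \cite{TASEPgeneral} with site-dependent parameters; for the right geometric/blocking case it is the mixed-update construction of Sec.~2.4 of \cite{TASEPgeneral}, again inhomogenized.

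Next I would establish, for each variant, the analog of Lem.~\ref{lem:TASEP_and_caterpillars}, coupling the caterpillar system at initial data $\vec y\in\Omega_N(\vec L)$ with an extended particle system in which particle $i$ starts its evolution at time $T_i=-\sum_{1\leq j<i}(L_j-1)$. This reduces the problem of computing the distribution of $\xx^{\head}_t$ to computing the joint distribution of heads in the extended system. The key input that makes the abstract machinery apply is that the one-step transition probabilities of the extended system admit a determinantal formula of the type \eqref{eq:G}, with the underlying measure kernels factorizing into the contour-integral form required by the hypotheses of Sec.~\ref{sec:biorth-det}. In each of the three cases this factorization is essentially classical (it is the inhomogeneous refinement of the determinantal transition probabilities associated with the four RSK-type dynamics reviewed in \cite{MR2469339}), so the only work is to track the particle-dependent parameters $p_i,q_i$ through the derivation.

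The identification of $\varphi$ and $v_i$ is then read off by matching the one-particle transition kernel with the contour-integral prefactor $\varphi(w)^t \prod(v_i-w)^{-1}$ appearing in $\SM_{-n}$ and $\Qpl_{(\ell,n]}$. The jump distribution contributes $\varphi(w)^t$: the generating function of a single left Bernoulli$(p_i)$ step gives $\varphi(w)=1+1/w$, a left geometric step with parameter $q_i$ gives $\varphi(w)=1/(1-1/w)$, and a right geometric step gives $\varphi(w)=1/(1-w)$. The interaction weights produce factors of the form $(v_i-w)^{-1}$, and matching these (as in Props.~2.4 and 2.6 above) forces $v_i=p_i/q_i$ in the first case, $v_i=1/q_i$ in the second, and $v_i=q_i$ in the third. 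Once identified, the proposition follows from applying Thm.~\ref{thm:biorth_general} to obtain the implicit Fredholm determinant representation \eqref{eq:caterpillars} and then Thm.~\ref{thm:kernel-explicit} to obtain the explicit formula \eqref{eq:caterpillars-Kt}.

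The main obstacle I expect is checking the analyticity hypotheses imposed on $\varphi$ in Sec.~\ref{sec:biorth}, namely that $\varphi$ be analytic and non-zero on an annulus $A_{\rrin,\rout}$ with $\rrin<\min v_i$ and $\rout>\max v_i$, and that a real parameter $\theta\in(\rrin,\min v_i)$ can be chosen. For $\varphi(w)=1/(1-w)$ and $\varphi(w)=1/(1-1/w)$ this constrains the admissible radii because of the pole at $w=1$ (resp.\ $w=0$); one must verify that for the given $v_i$ there is genuine room to place $\rrin,\rout,\theta$, which amounts to the standard nondegeneracy constraints on the $q_i$ (i.e.\ $q_i$ bounded away from $0$ and $1$ in the geometric cases). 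Once this is checked, the rest of the argument is a direct citation of the abstract results.
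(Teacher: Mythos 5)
Your proposal follows essentially the same route as the paper, which proves this (together with the other propositions of Sec.~\ref{sec:examples}) in a few lines at the start of Sec.~\ref{sec:application}: cite \cite{TASEPgeneral} (Secs.~2--3) for the biorthogonal-ensemble representation of these caterpillar dynamics, note that the inhomogeneous case follows by repeating those arguments, and then apply Thm.~\ref{thm:kernel-explicit} with $\psi=\varphi^t$, $a_i=\varphi^{L_i-1}$, $\kappa_i=L_i-1$ and the stated choices of $\varphi$ and $v_i$. One small slip: for $\varphi(w)=1/(1-1/w)=w/(w-1)$ the relevant pole is at $w=1$ (not $w=0$, which is a zero of $\varphi$ and lies inside the inner contour anyway); in this case $v_i=1/q_i>1$ and one chooses $1<\rrin<\min_i v_i$, so the annulus avoids the pole exactly as you anticipate.
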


\subsection{PushASEP}

As a last example we consider the case of PushASEP \cite{bp-push}, which is a version of TASEP where blocking and pushing dynamics occur together.
We will only discuss the model in continuous time and in a setting corresponding to all caterpillars having length $1$, although similar constructions can be made in some other cases. 
In this model there are two global parameters $r, \ell \geq0$, and the evolution is as follows. 
Particles jump independently to the right and to the left, with particle $i$ jumping to the right at rate $r v_i$ and to the left at rate $\ell / v_i$. 
When a particle jumps to the right onto an occupied site, the jump is forbidden (blocking dynamics).
When a particle jumps to the left onto an occupied site, it pushes the particle to the left, forcing it to jump (pushing dynamics).

\begin{prop}\label{prop:PushASEP}
The distribution of the particles $X_t=\xx^{\head}_t$ is given again by \eqref{eq:caterpillars}/\eqref{eq:caterpillars-Kt}, in this case with  $\varphi(w) = e^{r w+\ell/w}$, $L_i=1$, and the chosen values of the speed parameters $v_i$.
\end{prop}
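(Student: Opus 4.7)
The plan is to reduce the statement to the abstract main theorem, Thm.~\ref{thm:kernel-explicit}, after placing PushASEP inside the determinantal framework of Sec.~\ref{sec:biorth-det}. Since all caterpillar lengths equal $1$, the caterpillar system collapses to the particle system itself, so $\xx^{\head}_t = \xx_t$ and no analogue of Lem.~\ref{lem:TASEP_and_caterpillars} is needed; moreover $\Omega_{N}(\vec L) = \Omega_N$, so the initial data is unrestricted beyond strict ordering. With these identifications the only task is to verify that PushASEP fits the hypotheses of Sec.~\ref{sec:biorth-det} with the specific function $\varphi(w) = e^{rw+\ell/w}$ and speed parameters $v_i$.

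First I would recall the biorthogonal ensemble representation of the joint distribution of PushASEP obtained in \cite{bp-push} (and compare also with \cite{borodFerSas}). Starting from $\vec y \in \Omega_N$, the distribution function of $\xx_t$ can be written as a (signed) determinantal measure of the general shape considered in Sec.~\ref{sec:biorth-det}, with correlation kernel determined implicitly by a biorthogonalization problem that depends on $\vec y$. The one-particle jumps are a compound Poisson process on $\zz$ with right rate $r v_i$ and left rate $\ell/v_i$, so the free (non-interacting) transition kernel admits the contour integral
\begin{equation*}
\frac{1}{2\pi\I}\oint_{\gamma}\frac{\d w}{w^{y-x+1}}\,e^{t(rw+\ell/w)}\,\frac{1}{v_i-w}\cdot(\text{speed-dependent prefactor}),
\end{equation*}
up to the usual rearrangements absorbed into the $\alpha_i$ and $v_i - w$ factors appearing in the abstract kernels $Q_{(\ell,n]}$, $\Qpl_{(\ell,n]}$, $\SM_{-n}$, and $\SN_{(m,n]}$. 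This identifies the driving function as $\varphi(w) = e^{rw+\ell/w}$, which is precisely what appears in the statement.

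Second, I would check the analyticity assumptions imposed on $\varphi$ in Sec.~\ref{sec:biorth}. Although $e^{rw+\ell/w}$ carries an essential singularity at $w=0$ when $\ell > 0$, it is holomorphic and non-vanishing on $\cc\setminus\{0\}$; hence it is analytic and non-zero on any annulus $A_{\rrin,\rout}$ with $0<\rrin<\min v_i$ and $\rout>\max v_i$, and the contours $\gamma_{\rrin}$ defining $Q_{(\ell,n]}$, $\Qpl_{(\ell,n]}$, $\SM_{-n}$ stay strictly away from the origin, so all integrals are well-defined. Applying Thm.~\ref{thm:kernel-explicit} with these data then produces the kernel \eqref{eq:caterpillars-Kt}, and plugging it into the Fredholm determinant \eqref{eq:caterpillars} yields the claimed formula.

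The main obstacle is the first step: extracting from \cite{bp-push} the explicit form of the biorthogonalization problem so that it matches the abstract setup of Sec.~\ref{sec:biorth-det} term-by-term, in particular checking that the prefactors $\alpha_i = (v_i-\theta)\varphi(\theta)^{1-L_{i-1}}/\theta$ (which collapse to $(v_i-\theta)/\theta$ since $L_{i-1}=1$) align correctly with the normalization used in the PushASEP kernel. Once this matching is done, the proof is a direct specialization of Thm.~\ref{thm:kernel-explicit}, exactly as outlined at the beginning of Sec.~\ref{sec:application}.
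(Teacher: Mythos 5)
Your proposal is correct and follows essentially the same route as the paper: establish that PushASEP's multipoint distribution falls into the determinantal framework of Sec.~\ref{sec:biorth-det} (the paper cites \cite{TASEPgeneral} for this, with \cite{bp-push} as the underlying source, so your reference is equivalent), observe that $L_i\equiv 1$ makes $a_i\equiv 1$, $\kappa_i=0$ and $\Omega_N(\vec L)=\Omega_N$ so that no caterpillar coupling is needed, check the analyticity hypotheses for $\varphi(w)=e^{rw+\ell/w}$, and then specialize Thm.~\ref{thm:kernel-explicit} with $\psi(w)=\varphi(w)^t$ as described at the start of Sec.~\ref{sec:application}. The only thing worth making explicit (and which the paper does state) is that the identifications $\psi=\varphi^t$, $a_i=\varphi^{L_i-1}$, $\kappa_i=L_i-1$, together with Rem.~\ref{rem:caterpillars}, are exactly what turns the abstract kernel of Thm.~\ref{thm:kernel-explicit} into the kernel $\K_t$ of \eqref{eq:caterpillars-Kt}, conjugation factors and all.
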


\TOCstart
\section{Biorthogonal ensemble formula for determinantal measures}
\label{sec:biorth-det}

In \cite[Sec. 4]{TASEPgeneral} a Fredholm determinant formula, involving kernels given implicitly in a biorthogonal form, was given for certain marginals of a class of (in general, signed) determinantal measures in an abstract setting.
That result is a generalization of the results obtained for specific particle systems in earlier work such as \cite{sasamoto,borFerPrahSasam,bp-push,borodFerSas}, which covers all the examples considered in that paper, as well as the case of different speeds and different memory lengths considered here.
For clarity, and because it involves some definitions and notation which we will need later on anyway, we include the full result here (for proofs we refer to \cite{TASEPgeneral}).
At the end of this section we will explain how it applies to the particle systems discussed in Sec.~\ref{sec:examples}.

Throughout the section, $t$ denotes a time variable taking values in $\T$, which is either $\rr$ or $\zz$.
We fix $N\in\nn$ and a vector of speeds $\vec v = (v_i)_{i \in \set{N}}$ with $v_i > 0$ for each $i$.

The following result, whose proof can be found in \cite{TASEPgeneral} (Lem.~5.6), will be used often in this section and the next one to compute compositions of the kernels of a certain form:

\begin{lem}\label{lem:conv}
Consider two kernels $S_1$ and $S_2$ given by
\begin{equation}
S_i(x,y)=\frac1{2\pi\I}\oint_{\gamma}\d w\,\frac{\theta^{x-y}}{w^{x-y+1}}\phi_i(w),\label{eq:Si}
\end{equation}
where $\phi_1,\phi_2$ are complex functions which are both analytic on an annulus $A_{r_1,r_2}$ for some $r_1<r_2$ and $\gamma$ is any simple, positively oriented closed contour contained in $A_{r_1,r_2}$.
Then the series $S_1S_2(x,y)=\sum_{z\in\zz}S_1(x,z)S_2(z,y)$ is absolutely convergent and
\begin{equation}\label{eq:conv}
S_1S_2(x,y)=\frac1{2\pi\I}\oint_{\gamma}\d w\,\frac{\theta^{x-y}}{w^{x-y+1}}\phi_1(w)\phi_2(w).
\end{equation}
\end{lem}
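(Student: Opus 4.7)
The plan is to use the absolutely convergent Laurent expansions of $\phi_1$ and $\phi_2$ on the annulus $A_{r_1,r_2}$ and reduce the identity to the elementary fact that the Laurent coefficients of a product on an annulus are the Cauchy convolution of the Laurent coefficients of the factors. This will simultaneously deliver the absolute convergence of the series defining $S_1 S_2$.

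First I would expand $\phi_i(w) = \sum_{k \in \zz} a_k^{(i)} w^k$, absolutely convergent on any circle $|w| = \rho$ with $\rho \in (r_1, r_2)$. Deforming $\gamma$ to such a circle (allowed since $\phi_i$ is analytic on $A_{r_1,r_2}$) and reading off the residue at $w=0$ in \eqref{eq:Si} gives
\begin{equation*}
S_i(x,y) = \theta^{x-y}\, a_{x-y}^{(i)},
\end{equation*}
so that $S_i$ is a convolution operator on $\zz$ whose kernel records the Laurent coefficients of $\phi_i$.

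Next I would compute
\begin{equation*}
S_1 S_2(x,y) \;=\; \sum_{z \in \zz} S_1(x,z)\, S_2(z,y) \;=\; \theta^{x-y}\sum_{z \in \zz} a_{x-z}^{(1)}\, a_{z-y}^{(2)},
\end{equation*}
and, after the change of summation variable $m = x-z$, recognize the right-hand sum as $\sum_{m\in\zz} a_m^{(1)} a_{(x-y)-m}^{(2)}$, the Cauchy convolution of the Laurent coefficient sequences. Since both $\phi_i$ are analytic on $A_{r_1,r_2}$, so is $\phi_1 \phi_2$; the product series on any circle $|w|=\rho \in (r_1,r_2)$ converges absolutely and equals the Cauchy product of the two component series. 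In particular the convolution $\sum_m a_m^{(1)} a_{(x-y)-m}^{(2)}$ is absolutely convergent and equals the $(x-y)$-th Laurent coefficient $b_{x-y}$ of $\phi_1 \phi_2$. This is exactly what is needed to justify the absolute convergence of the original sum over $z$ and the rearrangement.

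Finally, applying Cauchy's integral formula once more identifies
\begin{equation*}
b_{x-y} \;=\; \frac{1}{2\pi\I}\oint_{\gamma} \d w\,\frac{\phi_1(w)\phi_2(w)}{w^{x-y+1}},
\end{equation*}
and multiplying by $\theta^{x-y}$ yields precisely the right-hand side of \eqref{eq:conv}. The only genuine difficulty is the absolute-convergence bookkeeping for the sum defining $S_1 S_2$, which the Cauchy product for absolutely convergent Laurent series handles essentially for free; an equivalent but more laborious route would work directly at the level of double contour integrals, using two concentric circles $\gamma_a, \gamma_b$ in the annulus and splitting $\sum_{z\in\zz}$ into two geometric pieces after deforming one contour inside the other in each piece so that the relevant ratio $w_1/w_2$ has modulus less than one.
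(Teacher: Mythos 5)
Your proof is correct. Note that the present paper does not actually supply a proof of Lemma~\ref{lem:conv}; it defers to \cite[Lem.~5.6]{TASEPgeneral}. The argument typically given in this literature is the double-contour-integral one you sketch in your final sentence: write $S_1S_2(x,y)$ as $\sum_{z\in\zz}$ of a product of two contour integrals, split the $z$-sum into two half-lines, place the two integration circles at distinct radii inside $A_{r_1,r_2}$ so that each half becomes an absolutely convergent geometric series in $w_1/w_2$ (or its reciprocal), sum each piece, and recombine by a residue computation. Your Laurent-series route reaches the same conclusion more directly and with less bookkeeping: the identity $S_i(x,y)=\theta^{x-y}a^{(i)}_{x-y}$ turns the kernel composition into a Cauchy convolution of Laurent coefficients, and absolute convergence falls out of $\sum_k|a^{(i)}_k|\rho^k<\infty$ for any $\rho\in(r_1,r_2)$ together with Tonelli. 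One small point worth making explicit: the step where you deform $\gamma$ to a circle and read off a single Laurent coefficient tacitly uses that the simple closed contour $\gamma\subset A_{r_1,r_2}$ winds once around the origin. If it did not, both sides of \eqref{eq:conv} would vanish identically, so the lemma is trivially true; under the (intended) nondegenerate reading, $\gamma$ is homotopic within the annulus to a circle $|w|=\rho$, and your computation goes through.
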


Define the kernel
\begin{equation*}%
\Q_{i}(x_1, x_2) = \frac{1}{2\pi\I}\oint_{\gamma_{\rhoout}}\!\d w\, \frac{(w - v_i)^{-1}}{w^{x_2 - x_1}} = v_i^{x_1 - x_2} \uno{x_1 \geq x_2}
\end{equation*}
for $i\in\set{N}$ and $x_1, x_2 \in \zz$, where $\rhoout>\max_iv_i$.
The inverse of $\Q_{i}$ is
\begin{equation*}%
\Q^{-1}_{i}(x_1, x_2) = \frac{1}{2\pi\I}\oint_{\gamma_{\rhoin}}\d w\, \frac{w - v_i}{w^{x_2 - x_1 + 2}} = \uno{x_1 = x_2} -  v_i \uno{x_1 = x_2 + 1},
\end{equation*}
where $\rhoin>0$. 
For $k \in \set{N}$ we set
\begin{equation*}%
\Q^{[k]} = \Q_{1} \Q_{2} \dotsm \Q_{k}, \qquad \Q^{[-k]} = \Q_{k}^{-1} \dotsm \Q_{2}^{-1} \Q_{1}^{-1},
\end{equation*}
with the convention $\Q^{[0]} = I$.
The kernels of these operators can be written explicitly (using Lem.~\ref{lem:conv}) as 
\begin{equation*}%
\Q^{[k]}(x_1, x_2) = \frac{1}{2\pi\I}\oint_{\gamma_{\rhoout}}\!\d w\, \frac{\prod_{i = 1}^k(w - v_i)^{-1}}{w^{x_2 - x_1 - k + 1}}, \qquad \Q^{[-k]}(x_1, x_2) = \frac{1}{2\pi\I}\oint_{\gamma_\rhoin}\!\d w\, \frac{\prod_{i = 1}^k(w - v_i)}{w^{x_2 - x_1 + k + 1}}.
\end{equation*}
We also introduce the (multiplication) kernels
\begin{equation*}%
\E_{k} (x_1, x_2) = v_k^{-x_1} \uno{x_1 = x_2}, \qquad \E_{-k} (x_1, x_2) = v_k^{x_2} \uno{x_1 = x_2}.
\end{equation*}

Next we introduce a kernel
\begin{equation*}%
\R_t(x_1,x_2) = \frac{1}{2\pi\I}\oint_{\gamma_\rhoin}\d w\, \frac{\varphi(w)^t}{w^{x_2 - x_1 +1}},
\end{equation*}
which depends on a given complex function $\varphi$.
We will assume that $\varphi$ and the radii $\rhoin$ and $\rhoout$ satisfy the following:

\begin{assumption}\label{a:phi}
\leavevmode
\begin{enumerate}[label=\uptext{(\roman*)}]
\item $\varphi\!:U\longrightarrow\cc$, where the domain $U\subseteq\cc$ contains $0$ and all values $v_i$, and $\varphi$ has at most a finite number of singularities in $U$.
\item $\varphi$ is analytic on an annulus $A_{\rhoin,\rhoout} \subseteq U$ with radii $0<\rhoin < \min_iv_i$ and $\rhoout > \max_iv_i$.
\item $\varphi(w) \neq 0$ for all $w\in A_{\rhoin,\rhoout}$.
\end{enumerate}
\end{assumption}

For $k, \ell \in \set{N}$ and $t \in \T$ we define the function
\begin{align}
F_{k, \ell}(x_1,x_2; t) &= \bigl(\E_k \Q^{[k]} \R_t \Q^{[-\ell]} \E_{-\ell}\bigr) (x_1,x_2)\label{eq:F_def}\\
&= \frac{1}{2\pi\I}\oint_{\gamma_{\rhoout}}\!\!\d w\, \frac{(w/v_k)^{x_1}}{(w/v_\ell)^{x_2}} \frac{\prod_{i = 1}^{\ell} (w - v_i)}{\prod_{i = 1}^{k} (w - v_i)} \frac{\varphi(w)^t}{w^{\ell - k +1}}.\label{eq:F_formula}
\end{align}
Finally, for $\vec y, \vec x \in \Omega_N$ and $s \leq t$, we define
\begin{align}\label{eq:G}
\G^{(N)}_{s, t} (\vec y, \vec x) = \left( \prod_{i = 1}^N \varphi(v_i)^{s-t} \right) \det \bigl[F_{k, \ell}(y_{k}, x_{\ell}; t-s)\bigr]_{k, \ell \in \set{N}}.
\end{align}

The function \eqref{eq:G} defines, by convolution, an (in general, signed) measure on particle configurations in a space-time domain. 
We are interested in the projections of this measure to special sets known as space-like paths, which we introduce now. For $(n_1, t_1), (n_2, t_2) \in \set{N} \times \T$ we write $(n_1, t_1) \prec (n_2, t_2)$ if $n_1 \leq n_2$, $t_1 \geq t_2$ and $(n_1, t_1) \neq (n_2, t_2)$. We write $\fn=(n,t)$ to denote elements of $\set{N} \times \T$.
Then we define the set of \emph{space-like paths} as
\begin{equation}\label{eq:space-like-paths}
\SLP_N = \bigcup_{m \geq 1} \bigl\{(\fn_i)_{i \in \set{m}}\!: \fn_i \in \set{N} \times \T, \fn_i \prec \fn_{i+1}\bigr\}.
\end{equation}
For a space-like path $\cS = \{(n_1, t_1), \dotsc, (n_m, t_m)\} \in \SLP_N$ and for $\vec y \in \Omega_N$ and $\vec x \in \Omega_m$, we set
\begin{equation}\label{eq:G+}
G^{+}_{\cS} (\vec y, \vec x) = \sum_{\substack{\vec x(t_i) \in \Omega_{n_i} : \\ x_{n_i}(t_i) = x_i, i \in \set{m}}} \G^{(n_m)}_{0, t_m}(\vec y_{\leq n_m}, \vec x(t_m)) \prod_{i=1}^{m-1} \G^{(n_i)}_{t_{i + 1}, t_{i}}(\vec x_{\leq n_{i}}(t_{i+1}), \vec x(t_{i})),
\end{equation}
where we write $\vec x_{\leq n_{i}}(t_{i+1})$ for the vector in $\Omega_{n_i}$ obtained by removing the entries $n_{i}+1, \dotsm, n_{i+1}$ from $\vec x(t_{i+1})$, and similarly for $\vec y_{\leq n_m}$.
We use $\vec x(t_i)$ to parametrize vectors by time points. In particular, we postulate that $\vec x(t_i)$ and $\vec x(t_{i+1})$ are different vectors even if $t_{i} = t_{i+1}$ (this slight abuse of notation, which makes clear the correspondence between vectors and the associated time points, will simplify the presentation later on).
For $T_N \leq \dotsm \leq T_1$ and for $\vec x \in \Omega_N$ and $\vec y \in \zz^N$, we set  
\begin{equation}\label{eq:G-}
G^{-}_{\vT} (\vec y, \vec x)  = \left( \prod_{i = 1}^N \varphi(v_i)^{T_i} \right)  \det \bigl[F_{k, \ell}(y_k, x_\ell; - T_{k})\bigr]_{k, \ell \in \set{N}},
\end{equation}
In applications to particle systems such as TASEP, $G^{+}_{\cS} (\vec y, \vec x)$ corresponds to starting the system in configuration $\vec y$ at time $t=0$ and asking for the probability that $X_{t_i}(n_i)=x_i$ for each $i\in\set{m}$, while $G^{-}_{\vT} (\vec y, \vec x)$ corresponds to having particle $i$ start evolving at time $T_i$ from $x_i$ and asking for the probability that $X_0=\vec y$.

Convolving \eqref{eq:G+} and \eqref{eq:G-} in the case $T_1 \leq t_m$, we define
\begin{equation}\label{eq:G_TS}
G_{\vT, \cS}(\vec y, \vec x) = \sum_{\vec z \in \Omega_N} \G^{-}_{\vT} (\vec y, \vec z) \G^{+}_{\cS} (\vec z, \vec x).
\end{equation}
Our goal is to obtain a formula for the following integrated version of $\G_{\vT,\cS}$: for $\vec y\in\zz^N$, $\vec a\in\zz^m$,
\begin{equation}\label{eq:mu_fixed}
\CM_{\vT,\cS}(\vec y, \vec a) = \sum_{\substack{\vec x \in \Omega_m : \\ x_i > a_i,  i \in \set{m}}} \G_{\vT,\cS}(\vec y, \vec x).
\end{equation}
In words, one should think of a collection of $N$ particles evolving in time, such that the $i$-th particle starts at location $y_i$ at time $T_i$. Then for a fixed space-like path $\cS$, containing pairs $(n_i, t_i)$, $G_{\vT, \cS}(\vec y, \vec x)$ defines a measure on $\vec x \in \Omega_m$, with the $i$-th element of $\vec x$ intepreted as the position of the $n_i$-th particle at time $t_i$. $\CM_{\vT,\cS}(\vec y, \vec a)$ is then the measure of the set of all particle configurations so that the $n_i$-th particle is located strictly to the right from $a_i$ at time $t_i$.

Before stating the result we need to introduce a certain space of functions $\V{n}(\vec v, \theta)$.
For fixed $n\in\nn$, $\theta > 0$ and a vector $\vec v$ as above, let $\nu(n)$ be the number of distinct values among the first $n$ entries $v_1,\dotsc,v_n$ of $\vec v$, let $u_1(n) < u_2(n) < \dotsm < u_{\nu(n)}(n)$ denote these distinct values, and let $\beta_k(n)$ be the multiplicity of $u_k(n)$ among these entries (in particular, $\sum_{k = 1}^{\nu(n)} \beta_k(n) = n$).
Then we define 
\begin{equation}\label{eq:space}
\V{n}(\vec v, \theta) = \spanning \bigl\{x \in \zz \longmapsto x^\ell (u_k(n) / \theta)^{x} : 1 \leq k \leq \nu(n),\; 0 \leq \ell < \beta_k(n)\bigr\}.
\end{equation}

Finally, given a space like path $\cS=\{\fn_1,\dotsc,\fn_m\}$ as above and a fixed vector $a\in\rr^m$ we extend the notation introduced in \eqref{eq:defChis} to $\P_a(\fn_j,x)=1-\bP_a(\fn_j,x)=\uno{x>a_j}$. The following result can be found in \cite[Thm.~4.3]{TASEPgeneral}.

\begin{thm}\label{thm:biorth_general}
Let the function $\varphi$ and the values $v_i$ satisfy Assum.~\ref{a:phi}, and fix $T_N \leq \dotsm \leq T_1$ and a space-like path $\cS$, the time points of which are all greater than $T_1$.
Then the function \eqref{eq:mu_fixed} can be written as
\begin{equation}\label{eq:M_formula}
\CM_{\vT,\cS}(\vec y, \vec a) = \det \bigl(\Id- \bP_{a}  \K \bP_{a}  \bigr)_{\ell^2(\cS \times \zz)},
\end{equation}
where $\det$ is the Fredholm determinant and:
\begin{enumerate}[label=\uptext{(\arabic*)}]
\item The kernel $\K\!: (\cS \times \zz)^2\longrightarrow\rr$ depends on $\vv T$ and $\vec y$, and is given by
\begin{equation}\label{eq:KernelK}
\K(\fn_i, x_i; \fn_j, x_j)=-\phi^{(\fn_i, \fn_j)}(x_i,x_j) \uno{\fn_i \prec \fn_j} + \sum_{k = 1}^{n_j}\Psi^{\fn_i}_{n_i - k}(x_i)\Phi^{\fn_j}_{n_j - k}(x_j),
\end{equation}
for $\fn_i = (n_i, t_i)$ and $\fn_j = (n_j, t_j)$ in $\cS$.
\item For $\fn_i$ and $\fn_j$ as before, such that $\fn_i \prec \fn_j$, the function $\phi^{(\fn_i, \fn_j)}$ is defined as
\begin{equation}\label{eq:phi}
\phi^{(\fn_i, \fn_j)}(x_i, x_j) = \frac{1}{2\pi\I}\oint_{\gamma_\rhoin}\d w\, \frac{\theta^{x_i - x_j} \varphi(w)^{t_i - t_j}}{w^{x_i - x_j - n_j + n_i + 1}} \prod_{k = n_i + 1}^{n_j} (v_k - w)^{-1}.
\end{equation}
\item For $\fn = (n, t) \in\cS$ and $k \in \set N$, the function $\Psi^{\fn}_{n - k}$ is given by
\begin{equation}\label{eq:Psi}
\Psi^{\fn}_{n - k}(x) = \frac{1}{2\pi\I}\oint_{\gamma_\rhoin}\d w\,\frac{\theta^{x-y_{k}} \varphi(w)^{t - T_k}}{w^{x - y_k + n - k + 1}} \frac{\prod_{i = 1}^n (v_i - w)}{\prod_{i = 1}^k (v_i - w)}.
\end{equation}
\item The functions $\Phi^{\fn}_{n - k}$, for $ k \in \set n$ and $\fn = (n,t)$, are uniquely characterized by:
\begin{enumerate}[label=\uptext{(\alph*)}]
\item The biorthogonality relation $\sum_{x\in\zz}\Psi_\ell^{\fn}(x)\Phi_k^{\fn}(x)=\uno{k=\ell}$, for each $k,\ell=0,\dotsc,n-1$.\label{it:biorth}
\item $\spanning\{x \in \zz \longmapsto \Phi^{\fn}_{k}(x) : 0 \leq k < n \} = \V{n}(\vec v, \theta)$.\label{it:poly}
\end{enumerate}
\end{enumerate}
\end{thm}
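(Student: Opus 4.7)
The plan is to recognize the measure $\G_{\vT,\cS}$ as a (signed) Eynard--Mehta ensemble along the space-like path $\cS$, extract its correlation kernel in biorthogonal form, and then identify $\Psi^{\fn}_{n-k}$ and $\Phi^{\fn}_{n-k}$ explicitly. This follows the general strategy pioneered in \cite{sasamoto,borFerPrahSasam} and developed abstractly in \cite{TASEPgeneral}. The crucial algebraic lever is the operator factorization $F_{k,\ell}(\cdot,\cdot;t)=\E_k \Q^{[k]}\R_t\Q^{[-\ell]}\E_{-\ell}$ from \eqref{eq:F_def}, which turns the Karlin--McGregor-type determinants in \eqref{eq:G}, \eqref{eq:G+}, \eqref{eq:G-} into chainable building blocks.

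The first step is to rewrite the convolutions in \eqref{eq:G+} and \eqref{eq:G_TS} via Cauchy--Binet; multilinearity of the determinants together with Lem.~\ref{lem:conv} for composing the $\R$/$\Q^{[\pm k]}$ pieces allows one to compress the chain into a product of determinants of the shape required by the extended Eynard--Mehta/LGV theorem. That theorem then yields the Fredholm determinant \eqref{eq:M_formula} with a correlation kernel $\K$ of the prescribed shape
\begin{equation*}
\K(\fn_i,x_i;\fn_j,x_j)=-\phi^{(\fn_i,\fn_j)}(x_i,x_j)\uno{\fn_i\prec\fn_j}+\sum_{k=1}^{n_j}\Psi^{\fn_i}_{n_i-k}(x_i)\Phi^{\fn_j}_{n_j-k}(x_j).
\end{equation*}
The propagator $\phi^{(\fn_i,\fn_j)}$ is read off as the single-particle space-like transition $\E_{n_i}\Q^{[n_i]}\R_{t_i-t_j}\Q^{[-n_j]}\E_{-n_j}$, which via Lem.~\ref{lem:conv} reduces to the contour integral \eqref{eq:phi}. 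Similarly $\Psi^{\fn}_{n-k}$ emerges as $(\E_k\Q^{[k]}\R_{t-T_k}\Q^{[-n]}\E_{-n})(y_k,\cdot)$, matching \eqref{eq:Psi} after a residue computation.

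The remaining task is to verify that \ref{it:biorth}--\ref{it:poly} characterize $\Phi^{\fn}_{n-k}$ uniquely. The span condition \ref{it:poly} is forced by the structure of Cauchy--Binet: $\Phi^{\fn}_{n-k}$ must lie in the linear span of the functions dual to those appearing in the last columns of the compressed determinant. A residue analysis of the contour integral in \eqref{eq:Psi} shows that this dual space is precisely $\V{n}(\vec v,\theta)$, where the polynomial prefactors $x^\ell$ in \eqref{eq:space} arise from higher-order poles at coinciding values among $v_1,\dotsc,v_n$. Uniqueness then reduces to showing that the linear map $f\mapsto\bigl(\sum_x\Psi^{\fn}_\ell(x)f(x)\bigr)_{0\le\ell<n}$ is a bijection from $\V{n}(\vec v,\theta)$ onto $\cc^n$; evaluating these pairings by residues produces a confluent (block) Vandermonde matrix in the distinct values $u_k(n)$ with multiplicities $\beta_k(n)$, whose non-singularity is a standard fact.

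The main obstacle I expect is the combinatorics of the fully inhomogeneous regime: when speeds coincide, polynomial prefactors appear in $\V{n}(\vec v,\theta)$ and the contour integrals defining $\Psi^{\fn}_{n-k}$ develop higher-order poles at the corresponding $v_i$, so bookkeeping residues through both the Cauchy--Binet reduction and the span verification requires care. A secondary subtlety is justifying all contour deformations: $\varphi(w)^t$ is analytic only on $A_{\rhoin,\rhoout}$ (Assum.~\ref{a:phi}), and the various $(v_i-w)^{\pm1}$ factors together with the ordering $\rhoin<\min_iv_i<\max_iv_i<\rhoout$ dictate which poles may be crossed at each step; the exponent structure inherited from $\vec y\in\Omega_N$ and the space-like ordering of $\cS$ is what makes the deformations consistent. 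With these two points under control, the identification of $\K$ in the form \eqref{eq:KernelK}--\eqref{eq:Psi} together with the defining properties \ref{it:biorth}--\ref{it:poly} is a routine assembly.
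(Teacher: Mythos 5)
The paper itself does not prove Theorem~\ref{thm:biorth_general}: the text preceding it explicitly refers the reader to \cite{TASEPgeneral} for proofs, and the lemmas in Section~\ref{sec:biorth} establish the existence, uniqueness and explicit solution of the biorthogonalization problem \emph{posed} by the theorem rather than the reduction to \eqref{eq:M_formula} itself. So there is no in-paper proof to compare against; what I can do is assess whether your sketch is a plausible account of the argument in that reference.

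Your plan captures the right strategy — treat $\G_{\vT,\cS}$ as a (signed) Eynard--Mehta ensemble, compress the chain of convolutions in \eqref{eq:G+} and \eqref{eq:G_TS} using multilinearity together with Lemma~\ref{lem:conv}, and invoke the extended Eynard--Mehta machinery to obtain \eqref{eq:M_formula} with a kernel of the biorthogonal form \eqref{eq:KernelK}. One concrete identification is wrong, though: you read off $\phi^{(\fn_i,\fn_j)}$ as $\E_{n_i}\Q^{[n_i]}\R_{t_i-t_j}\Q^{[-n_j]}\E_{-n_j}$, but commuting and cancelling (legitimate by Lemma~\ref{lem:conv}) gives $\Q^{[n_i]}\Q^{[-n_j]}=\Q_{n_j}^{-1}\dotsm\Q_{n_i+1}^{-1}$, whose integrand carries $\prod_{k=n_i+1}^{n_j}(w-v_k)$, the \emph{reciprocal} of the $\prod_k(v_k-w)^{-1}$ appearing in \eqref{eq:phi}. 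The propagator must instead be the forward product, i.e.~a conjugate of $\E_{n_i}\Q^{[-n_i]}\R_{t_i-t_j}\Q^{[n_j]}\E_{-n_j}=\E_{n_i}\ts\Q_{n_i+1}\dotsm\Q_{n_j}\R_{t_i-t_j}\E_{-n_j}$; your identification of $\Psi^{\fn}_{n-k}$ as a conjugate of $F_{k,n}(y_k,\cdot;t-T_k)$ is, by contrast, correct. Finally, your confluent-Vandermonde argument for uniqueness is sound and is essentially the same fact the paper does establish in its abstract setting (Lemma~\ref{lem:biorth-unique}), only organized differently: there the invertibility of the pairing matrix is deduced from the block-triangular action of $(Q_n^*)^{-1}$ on the basis $\B{n}(\vec v,\theta)$ proved in Lemma~\ref{lem:Q-spaces}, rather than by evaluating the residues of $\Psi^n_\ell$ directly.
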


In applications to particle systems we are usually interested in the case $\cS = \{(i, t + T_i) : i \in \set{N}\}$ for some $T_1\geq\dotsm\geq T_N$, corresponding to starting particle $i$ at time $t+T_i$. In this case each point $\fn = (n, t)$ in $\cS$ is determined by its first component $n$ and the the kernel in \eqref{eq:KernelK} can be reexpressed as a kernel $\K\!: (\set{N} \times \zz)^2\longrightarrow\rr$ given by 
\begin{equation}\label{eq:KernelK-particular}
\K(n_i, x_i; n_j, x_j)=-\phi^{(n_i, n_j)}(x_i,x_j) \uno{n_i < n_j} + \sum_{k = 1}^{n_j}\Psi^{n_i}_{n_i - k}(x_i)\Phi^{n_j}_{n_j - k}(x_j),
\end{equation}
with $\phi^{(n_i,n_j)}(x_i,x_j)=\frac{1}{2\pi\I}\oint_{\gamma_\rhoin}\d w\, \frac{\theta^{x_i - x_j} \varphi(w)^{T_{n_i} - T_{n_j}}}{w^{x_i - x_j - n_j + n_i + 1}} \prod_{k = n_i + 1}^{n_j} (v_k - w)^{-1}$ for $n_i<n_j$, $\Psi^{n}_{n - k}(x) = \frac{1}{2\pi\I}\oint_{\gamma_\rhoin}\d w\,\frac{\theta^{x-y_{k}} \varphi(w)^{t + T_n - T_k}}{w^{x - y_k + n - k + 1}} \frac{\prod_{i = 1}^n (v_i - w)}{\prod_{i = 1}^k (v_i - w)}$, and the $\Phi^{n}_{n - k}$ are uniquely characterized by the conditions in (4) of the last theorem (with $\fn$ replaced by $n$).

Consider now the concrete setting of Sec.~\ref{sec:caterpillars-rB}, where $T_k = - \sum_{1 \leq j < k} (L_j-1)$ (see Lem. \ref{lem:TASEP_and_caterpillars}) with the special choice $\varphi(w)=1+w$ and $v_i = p_i/q_i$. As explained in \cite[Sec.~1.2]{TASEPgeneral}, the representation of the distribution functions of caterpillars as measures of the type \eqref{eq:mu_fixed} requires separation of initial states $y_{j} - y_{j+1}\geq L_{j} - 1$ for all $j$ (this condition on the initial state appears also in Thm.~\ref{thm:kernel-explicit} where we prove a formula for the kernel corresponding to this measure).
In that case the functions $\phi^{(n_i,n_j)}$ and $\Psi^n_{n-k}$ appearing in \eqref{eq:KernelK-particular} are given by
\begin{equation}\label{eq:phi-caterpillars}
\phi^{(n_i, n_j)}(x_i, x_j) = \frac{1}{2\pi\I}\oint_{\gamma_\rhoin}\d w\, \frac{\theta^{x_i - x_j}}{w^{x_i - x_j - n_j + n_i + 1}} \prod_{k = n_i + 1}^{n_j} \frac{\varphi(w)^{L_{k-1}-1}}{v_k - w}
\end{equation}
and 
\begin{equation}\label{eq:Psi-caterpillars}
\Psi^{n}_{n - k}(x) = \frac{1}{2\pi\I}\oint_{\gamma_\rhoin}\d w\,\frac{\theta^{x-y_{k}} \varphi(w)^{t}}{w^{x - y_k + n - k + 1}} \frac{\prod_{i = 1}^n (v_i - w) / \varphi(w)^{L_{i-1}-1}}{\prod_{i = 1}^k (v_i - w) / \varphi(w)^{L_{i-1}-1}},
\end{equation}
where $L_{0} = 1$.
Analogous formulas can be obtained from \eqref{eq:KernelK-particular} for the other models described in Sec.~\ref{sec:examples}, using the choices of function $\varphi$ and speeds $v_i$ detailed in that section.
The fact that the multipoint distributions of these models can be expressed through \eqref{eq:mu_fixed} is explained in \cite[Secs.~2, 3]{TASEPgeneral} (the argument was given there in the case of equal speeds, but it extends to general case without changes).

\section{Explicit biorthogonalization scheme}
\label{sec:biorth}

Throughout this section we fix $N\in\nn$, which in applications to particle systems corresponds to the number of particles in the system under consideration.
We also fix vectors $\vec v\in(\rr_{>0})^N$ and $\vec y\in\zz^N$, which play the role of the particle speeds and initial positions\footnote{In applications we usually consider systems with infinitely many particles but where the evolution of the first $N$ particles is not affected by the remaining ones; since our formulas will yield the finite-dimensional distributions of the system, this restriction to $\vec v$ and $\vec y$ of size $N$ is not consequential.}.

\subsection{Setting}

We consider a family of strictly positive measures $(q_{\ell}(i))_{i\in\zz}$ on $\zz$, $\ell\in\set{N-1}$, which satisfies:

\begin{assumption}\label{assum:q}
\leavevmode
\begin{enumerate}[label=\uptext{(\roman*)}]
\item For each $\ell\in\set{N-1}$ there is a $\kappa_\ell \in\nn_0$ such that $q_{\ell}(i)=1$ for all $i>\kappa_\ell$,
\item\label{item:q2} There is a $\theta\in(0,\min_{j\in\set{N}}v_j)$ such that or each $\ell\in\set{N-1}$, $\sum_{i\in\zz}q_{\ell}(i)(\theta/v_\ell)^i<\infty$ and $\sum_{i\in\zz}q_{\ell}(i)(\theta/v_{\ell+1})^i<\infty$.
\end{enumerate}
\end{assumption}

Next we introduce a function $a_\ell(w)$, $\ell\in\set{N-1}$, which is constructed out of the measures $q_\ell$ through the following Laurent series:
\begin{equation}\label{eq:def-a}
a_\ell(w)=\sum_{i\leq\kappa_\ell}(q_{\ell}(i+1)-q_{\ell}(i))(w / v_\ell)^i.
\end{equation}
For convenience we also set
\begin{equation}\label{eq:convention0}
q_0(i)=\uno{i>0},\qquad\kappa_0=0\qqand a_0(w)=1.
\end{equation}
We also consider a fixed complex function $\psi$.
We assume that $\psi$ and the $a_\ell$'s satisfy:

\begin{assumption}\label{assum:apsi}
There are radii $\rin$ and $\rout$ satisfying $0<\rrin<\theta<\min v_i$, and $\rout > \max v_i$ (with $\theta$ given in Assum.~\ref{assum:q}) such that $a_\ell(w)$ is analytic on $\{w\in\cc\!:|w|\geq\rrin\}$ while $1/a_\ell(w)$, $\psi(w)$ and $1/\psi(w)$ are analytic and non-zero on the annulus $A_{\rin,\rout}$.
\end{assumption}

Using the functions $a_\ell$ we introduce the Markov kernels
\begin{equation}\label{eq:Q}
Q_\ell(x,y) = \frac{\alpha_{\ell}}{2\pi\I}\oint_{\gamma_\rrin}\d w\,\frac{\theta^{x-y}}{w^{x-y}}\frac{a_{\ell-1}(w)}{v_\ell - w}
\end{equation}
for $x, y \in \zz$ and $\ell\in\set{N}$, with 
\begin{equation}
\alpha_{\ell}=\frac{v_\ell-\theta}{a_{\ell-1}(\theta)\theta} = \frac{1}{\sum_{i \in \zz} (\theta / v_\ell)^i q_{\ell-1}(i)}.\label{eq:alphaell}
\end{equation}
Due to Assump.~\ref{assum:q}\ref{item:q2}, the sum in this expression is finite. 
Note that since $\rrin<v_\ell$, the contour $\gamma_\rrin$ in the integral in \eqref{eq:Q} includes only the pole at $w = 0$. 
Using \eqref{eq:def-a} we can write explicitly, for $\ell\in\set{N}$, 
\begin{equation}\label{eq:Q-alt}
Q_\ell(x,y) = \alpha_{\ell} (\theta / v_\ell)^{x-y} q_{\ell-1}(x-y).
\end{equation}
which shows that $Q_\ell$ is indeed a Markov kernel (by the definition of $\alpha_\ell$; recall also that $q_{\ell-1}$ is a positive measure).
In particular, since for $x-y>\kappa_{\ell}$ we have $q_\ell(x-y) = 1$,
\begin{equation}\label{eq:Q-simple}
Q_\ell(x,y) = \alpha_{\ell} (\theta / v_\ell)^{x-y} \qquad\forall~x - y > \kappa_{\ell-1}.
\end{equation}
Note in particular that $Q_1$ is simply the transition kernel of a geometric random walk:
\[Q_1(x,y)=\frac{v_1-\theta}{\theta} (\theta / v_1)^{x-y}\uno{x>y}.\]

\begin{rem}
Note that we have defined $Q_\ell$ using the function $a_{\ell-1}$. It might seem more natural to use $a_\ell$ in the definition, but in our setting this is not the case: thinking about the systems of caterpillars from Sec.~\ref{sec:caterpillars-rB}, the dynamics of the head of the $\ell$-th caterpillar depends on its ``speed'' $p_\ell$ and on the length $L_{\ell-1}$ of the caterpillar to its right, but not on its own length. This is also why we do not need to introduce the measures $q_\ell$ and the functions $a_\ell$ for $\ell=N$.
\end{rem}

The inverse kernel of $Q_\ell$ is
\begin{equation}\label{eq:Q-inverse}
Q^{-1}_\ell(x,y) = \frac{\alpha^{-1}_{\ell}}{2\pi\I}\oint_{\gamma_\rrin}\d w\,\frac{\theta^{x-y}}{w^{x-y + 2}}\frac{v_\ell - w}{a_{\ell-1}(w)}.
\end{equation}
Given integers $0 \leq \ell < n$ we denote  
\begin{equation}\label{eq:Q-many}
Q_{(\ell, n]}(x,y) = Q_{\ell+1} \cdots Q_n(x,y) = \frac{1}{2\pi\I}\oint_{\gamma_\rrin}\d w\,\frac{\theta^{x-y}}{w^{x-y - n + \ell + 1}} \prod_{i = \ell+1}^n \frac{\alpha_{i} a_{i-1}(w)}{v_i-w},
\end{equation}
whose inverse is
\begin{equation}\label{eq:Q-many-inv}
Q_{(\ell,n]}^{-1}(x,y) = Q^{-1}_{n} \cdots Q^{-1}_{\ell+1}(x,y) = \frac{1}{2\pi\I}\oint_{\gamma_\rrin}\d w\,\frac{\theta^{x-y}}{w^{x-y + n - \ell + 1}} \prod_{i = \ell+1}^n \frac{v_i-w}{\alpha_{i} a_{i-1}(w)},
\end{equation}
where we used Lem.~\ref{lem:conv}. We note that these formulas make sense also for $\ell =n$ if we postulate that the (empty) products in this case are equal to $1$: $Q_{(n, n]}=Q^{-1}_{(n, n]}=I$.
We also set $Q_{[\ell, n]} = Q_{(\ell - 1, n]}$ for $1 \leq \ell \leq n$ and $Q_{[1, k)} = Q_{[1, k-1]}$ for $k \geq 2$.

Similarly, using the function $\psi$ we define a kernel $\R$ and its inverse $\R^{-1}$ as
\begin{equation}\label{eq:defR}
\R(x,y)=\frac1{2\pi\I}\oint_{\gamma_\rin}\d w\,\frac{\theta^{x-y}}{w^{x-y+1}}\psi(w), \qquad \R^{-1}(x,y)=\frac1{2\pi\I}\oint_{\gamma_\rin}\d w\,\frac{\theta^{x-y}}{w^{x-y+1}} \frac{1}{\psi(w)}.
\end{equation}

\subsection{The biorthogonalization problem}\label{sec:biorthproblem}

For $k \in \set n$, we define
\begin{equation}\label{eq:defPsink}
\Psi^n_{n - k}(x) = \R Q^{-1}_{(k, n]}(x,y_{k}) = \frac{\theta^{x-y}}{2\pi\I}\oint_{\gamma_{\rin}}\d w\,\frac{\psi(w)}{w^{x-y_{k}+n - k+1}} \prod_{i = k+1}^n \frac{v_i-w}{\alpha_{i} a_{i-1}(w)}.
\end{equation}
We extend this definition to $k>n$ by setting
\begin{equation}\label{eq:defPsinkext}
\Psi^n_{n - k}(x) = \R Q_{(n, k]}(x,y_{k}) = \frac{\theta^{x-y}}{2\pi\I}\oint_{\gamma_{\rin}}\d w\,\frac{\psi(w)}{w^{x-y_{k}+n - k+1}} \prod_{i = n+1}^k \frac{\alpha_{i} a_{i-1}(w)}{v_i-w}.
\end{equation}
We consider a family of functions $(\Phi^n_k)_{k=0,\dotsc,n-1}$ characterized by:
\begin{itemize}
\item[($\star$)] \hypertarget{it:biorth4}{The} biorthogonality relation $\sum_{x\in\zz}\Psi_\ell^{n}(x)\Phi_k^{n}(x)=\uno{k=\ell}$ for each $k,\ell=0,\dotsc,n-1$.
\item[($\star\star$)] \hypertarget{it:poly4} $\spanning\{x \in \zz \longmapsto \Phi^{n}_{k}(x) : 0 \leq k < n \} = \V{n}(\vec v, \theta)$, where the set $\V{n}(\vec v, \theta)$ is defined in \eqref{eq:space}. %
\end{itemize}
When all values $v_i$ are equal to $1$, this biorthogonalization problem simplifies to the one considered in \cite[Sec.~5.2]{TASEPgeneral}.

Existence and uniqueness of the solution to this biorthogonalization problem is proved in Lem.~\ref{lem:biorth-unique} below, while an exact solution is provided in Thm.~\ref{thm:h_heat_Q}.

It will be convenient in the following computations to employ a different basis of the space \eqref{eq:space}: 
\begin{equation}\label{eq:basis}
\B{n}(\vec v, \theta) = \bigl\{ e^n_{k, \ell}(x) : 1 \leq k \leq \nu(n),\; 0 \leq \ell < \beta_k(n)\bigr\},
\end{equation}
where the basis functions are 
\begin{equation}\label{eq:basis-functions}
e^n_{k, \ell}(x) = (x)_\ell (u_k(n) / \theta)^{x},
\end{equation}
$(x)_\ell = x(x-1) \cdots (x - \ell+1)$ is the falling factorial and, we recall, $\nu(n)$ and $\beta_k(n)$ where defined in the paragraph preceding \eqref{eq:basis}.
Then the space \eqref{eq:space} can be expressed as follows:
\begin{equation}\label{eq:newbasis}
\V{n}(\vec v, \theta) = \spanning \bigl\{x \in \zz \longmapsto f(x) : f \in \B{n}(\vec v, \theta)\bigr\}.
\end{equation}

In the following two lemmas we demonstrate how convolutions with the kernels $(Q_{n}^*)^{-1}$, $\R^*$ and $(\R^{-1})^*$ act on the functions \eqref{eq:basis-functions}.

\begin{lem}\label{lem:Q-spaces}
Fix $n \geq 1$.
For each $1 \leq k \leq \nu(n)$ and $0 \leq \ell < \beta_k(n)$, there exist real values $c^n_{k}(\ell, m)$, $0 \leq m < \beta_k(n)$, such that 
\begin{equation}\label{eq:basis-and-Q}
(Q_{n}^*)^{-1} e^{n}_{k, \ell}(x) = \sum_{m = 0}^{\ell} c^n_{k}(\ell, m) e^{n}_{k, m}(x).
\end{equation}
Moreover, $c^n_{k}(\ell, \ell) \neq 0$ if $v_n \neq u_k(n)$. In the case $v_n = u_k(n)$ we have $c^n_{k}(\ell, \ell) = 0$ and $c^n_{k}(\ell, \ell-1) \neq 0$, where the latter holds if $\ell \geq 1$. 
In particular, the operator $(Q_{n}^*)^{-1}$ maps $\V{n}(\vec v, \theta)$ to $\V{n - 1}(\vec v, \theta)$, with the convention $\V{0}(\vec v, \theta) = \{0\}$. 
\end{lem}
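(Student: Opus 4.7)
The strategy is to exploit that $(Q_n^*)^{-1}$ is a convolution operator on $\zz$, so it acts diagonally on pure exponentials $y\mapsto z^y$, with the falling-factorial prefactors $(x)_\ell$ in $e^n_{k,\ell}$ produced by differentiation in $z$. First, expand $F(w):=(v_n-w)/a_{n-1}(w)=\sum_{j\in\zz}d_jw^j$ as a Laurent series on $A_{\rin,\rout}$ (valid by Assum.~\ref{assum:apsi}); a residue calculation in \eqref{eq:Q-inverse} then yields the clean formula $(Q_n^*)^{-1}(x,y)=Q_n^{-1}(y,x)=\alpha_n^{-1}\theta^{y-x}d_{y-x+1}$.

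Setting $z=u_k(n)/\theta$ and substituting $j=y-x+1$ gives
\[
(Q_n^*)^{-1}e^n_{k,\ell}(x)=\alpha_n^{-1}z^x\sum_{j\in\zz}d_j(\theta z)^{j-1}(x+j-1)_\ell;
\]
the Chu--Vandermonde identity $(x+j-1)_\ell=\sum_{m=0}^\ell\binom{\ell}{m}(x)_m(j-1)_{\ell-m}$ then separates $x$ from $j$ and exhibits the expansion \eqref{eq:basis-and-Q}, with coefficients $c_k^n(\ell,m)$ proportional to $\bigl[w^{\ell-m}\partial_w^{\ell-m}(F(w)/w)\bigr]_{w=u_k(n)}$.

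The claims about $c_k^n(\ell,\ell)$ and $c_k^n(\ell,\ell-1)$ now reduce to local analysis of $F(w)/w$ at $w=u_k(n)$. For $m=\ell$ the coefficient equals a nonzero constant times $(v_n-u_k(n))/\bigl(u_k(n)\,a_{n-1}(u_k(n))\bigr)$, which vanishes iff $v_n=u_k(n)$ (using that $a_{n-1}$ has no zeros in $A_{\rin,\rout}$). In that degenerate case $F$ has a simple zero at $v_n$, so one further derivative produces $c_k^n(\ell,\ell-1)$ proportional to $F'(v_n)=-1/a_{n-1}(v_n)\neq 0$ (provided $\ell\geq 1$). The inclusion $(Q_n^*)^{-1}\V{n}\subseteq\V{n-1}$ then drops out from the coefficient structure: either the unique basis vector of $\V{n}\setminus\V{n-1}$ is annihilated (when $v_n$ is a new speed, the vector $e^n_{k^*,0}$ with $u_{k^*}(n)=v_n$) or it is pushed to strictly lower degree in the falling-factorial basis (when $v_n$ coincides with some $u_{k^*}(n-1)$, the vector $e^n_{k^*,\beta_{k^*}(n)-1}$ goes to degrees $\leq\beta_{k^*}(n-1)-1$), and all resulting lower-degree vectors belong to $\V{n-1}$.

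The main obstacle is justifying the manipulations with the doubly-infinite Laurent series: both the interchange of $\sum_y$ with the contour integral defining $Q_n^{-1}$ and the term-by-term differentiation of $F(w)/w$ must be verified. Both are legitimate because $F$ is analytic on the open annulus $A_{\rin,\rout}$, which contains the circle $|w|=u_k(n)$; the resulting geometric decay of the coefficients $d_j$ at infinity in both directions delivers absolute convergence of every series that appears, even after multiplication by the polynomial factor $(x+j-1)_\ell$.
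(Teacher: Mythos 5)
Your argument is correct and takes essentially the same route as the paper's: compute $(Q_n^*)^{-1}e^n_{k,\ell}$ by shifting the summation variable, applying the Vandermonde identity for falling factorials, and identifying the coefficients $c^n_k(\ell,m)$ with derivatives of $(v_n-w)/(w\,a_{n-1}(w))$ evaluated at $w=u_k(n)$, from which the (non)vanishing claims and the inclusion $\V{n}\to\V{n-1}$ follow by inspection. The only cosmetic difference is that you read the kernel entries of $Q_n^{-1}$ directly off the Laurent expansion of $F(w)/w$ and resum, whereas the paper splits the sum over $x\ge0$ and $x<0$, deforms the contour to $\gamma_{\rout}$ resp. $\gamma_{\rin}$, and recombines into a small circle around $u_k(n)$ before applying Cauchy's formula; these are the same manipulation under different bookkeeping, and both rest on the same analyticity hypotheses on $1/a_{n-1}$.
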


\begin{proof}
We need to prove only the expansion \eqref{eq:basis-and-Q} and the stated properties of the coefficients $c^n_{k}(\ell, m)$, since the last statement in the lemma follows from those.

From \eqref{eq:Q-inverse} and \eqref{eq:basis} we have
\begin{equation}
(Q_{n}^*)^{-1} e^{n}_{k, \ell}(z) = \sum_{x \in \zz} e^{n}_{k, \ell}(x) Q_{n}^{-1}(x,z) 
= \sum_{x \in \zz} (x)_\ell\Big(\frac{u_k(n)}{\theta}\Big)^{x}\ts\frac{\alpha^{-1}_{n}}{2\pi\I}\oint_{\gamma_\rrin}\d w\,\frac{\theta^{x-z}}{w^{x-z + 1}}\frac{v_{n} - w}{w a_{n-1}(w)}.
\end{equation}
Changing the summation variable $x \longmapsto x+z$ and using the binomial identity for falling factorials $(x+z)_\ell=\sum_{m=0}^\ell\binom{\ell}{m}(x)_m(z)_{\ell-m}$, we write the preceding expression as
\begin{equation}\label{eq:sum}
\sum_{m = 0}^\ell {\ell \choose m} (z)_{\ell-m} \Big(\frac{u_k(n)}{\theta}\Big)^{z} \sum_{x \in \zz} (x)_{m} \frac{\alpha^{-1}_{n}}{2\pi\I}\oint_{\gamma_\rrin}\d w\,\frac{u_k(n)^{x}}{w^{x + 1}}\frac{v_{n} - w}{w a_{n-1}(w)}.
\end{equation}
Now for any $m\in\zz_{\geq 0}$ and any complex $\xi$ satisfying $|\xi| < 1$ we have 
\[\textstyle\sum_{x \geq 0} (x)_m \xi^x = \xi^m \frac{\d^m}{\d \xi^m} \sum_{x \geq 0} \xi^x = \xi^m \frac{\d^m}{\d \xi^m} \frac{1}{1 - \xi} = \xi^m \frac{m!}{(1 - \xi)^{m+1}},\]
and, similarly, in the case $|\xi| > 1$ we have 
\[\textstyle\sum_{x < 0} (x)_m \xi^x = \xi^m \frac{\d^m}{\d \xi^m} \sum_{x < 0} \xi^x = - \xi^m \frac{\d^m}{\d \xi^m} \frac{1}{1-\xi} = -\xi^m \frac{m!}{(1 - \xi)^{m+1}}.\]
Hence for the sum over $x \geq 0$ in \eqref{eq:sum} we can deform the integration contour to $\gamma_{\rout}$ (thanks to Assum.~\ref{assum:apsi}) so that $|w| > u_k(n)$ to get
\begin{equation*}
\sum_{x \geq 0} (x)_{m} \frac{\alpha^{-1}_{n}}{2\pi\I}\oint_{\gamma_{\rout}}\d w\,\frac{u_k(n)^{x}}{w^{x + 1}}\frac{v_{n} - w}{w a_{n-1}(w)} = m! \frac{\alpha^{-1}_{n}}{2\pi\I}\oint_{\gamma_{\rout}}\d w\,\frac{1}{(w - u_k(n))^{m+1}}\frac{v_{n} - w}{w a_{n-1}(w)},
\end{equation*}
while for the sum over $x < 0$ the contour satisfies $|w| < u_k(n)$, so
\begin{equation*}
\sum_{x < 0} (x)_{m} \frac{\alpha^{-1}_{n}}{2\pi\I}\oint_{\gamma_{\rin}}\d w\,\frac{u_k(n)^{x}}{w^{x + 1}}\frac{v_{n} - w}{w a_{n-1}(w)} = - m! \frac{\alpha^{-1}_{n}}{2\pi\I}\oint_{\gamma_{\rin}}\d w\,\frac{1}{(w - u_k(n))^{m+1}}\frac{v_{n} - w}{w a_{n-1}(w)}.
\end{equation*}
In these computations we used Fubini's theorem to swap summation and integration. Since $\rrin<u_k(n)<\rout$, adding the two expressions we conclude that the sum over $x$ in \eqref{eq:sum} equals 
\begin{equation*}
m! \frac{\alpha^{-1}_{n}}{2\pi\I}\oint_{\Gamma_{u_k(n)}}\d w\,\frac{1}{(w - u_k(n))^{m+1}}\frac{v_{n} - w}{w a_{n-1}(w)}
\end{equation*}
where the contour $\Gamma_{u_k(n)}$ includes only the pole at $u_k(n)$.
Using this in \eqref{eq:sum} together with Cauchy's integral formula we get
\begin{equation*}
(Q_{n}^*)^{-1} e^{n}_{k, \ell}(z) = \alpha^{-1}_{n} \sum_{m = 0}^\ell {\ell \choose m} (z)_{\ell-m} \left(\frac{u_k(n)}{\theta}\right)^{z} \frac{\d^m}{\d w^m} \left(\frac{v_{n} - w}{w a_{n-1}(w)}\right) \bigg|_{w = u_k(n)}.
\end{equation*}
The right-hand side is in the span of the functions $e^{n}_{k, \ell-m}(z)$ for $0 \leq m \leq \ell$, and it can be written as \eqref{eq:basis-and-Q} with the constants 
\begin{equation*}
c^n_{k}(\ell, m) = \alpha^{-1}_{n}  {\ell \choose \ell - m} \frac{\d^{\ell - m}}{\d w^{\ell - m}} \left(\frac{v_{n} - w}{w a_{n-1}(w)}\right) \bigg|_{w = u_k(n)}.
\end{equation*}
If $v_{n} \neq u_k(n)$, then the preceding formula yields $c^n_{k}(\ell, \ell) = \alpha^{-1}_{n} \frac{v_{n} - u_k(n)}{u_k(n) a_{n-1}(u_k(n))} \neq 0$. On the other hand, if $v_{n} = u_k(n)$, then we have $c^n_{k}(\ell, \ell) = 0$ and additionally if $\ell\geq1$,
\begin{equation*}
c^n_{k}(\ell, \ell-1) = \alpha^{-1}_{n}  \ell \frac{\d}{\d w} \left(\frac{v_{n} - w}{w a_{n-1}(w)}\right) \bigg|_{w = v_n} = - \frac{\alpha^{-1}_{n} \ell}{v_n a_{n-1}(v_n)} \neq 0.\qedhere
\end{equation*}
\end{proof}

\begin{lem}\label{lem:R-image}
The operators $\R^*$ and $(\R^{-1})^*$ map $\V{n}(\vec v, \theta)$ onto itself. 
\end{lem}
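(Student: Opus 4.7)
The strategy is to adapt the contour-deformation computation from the proof of Lem.~\ref{lem:Q-spaces} to the operators $\R^*$ and $(\R^{-1})^*$, using the basis $\B{n}(\vec v, \theta)$ from \eqref{eq:basis}. Concretely, I will show that for each basis function $e^n_{k,\ell}(x) = (x)_\ell (u_k(n)/\theta)^x$ one has an upper-triangular expansion
\begin{equation*}
\R^* e^n_{k,\ell}(x) = \sum_{m=0}^\ell d^n_k(\ell,m)\, e^n_{k,m}(x),
\end{equation*}
with nonzero diagonal coefficient $d^n_k(\ell,\ell)=\psi(u_k(n))\neq 0$, so that $\R^*$ maps $\V{n}(\vec v,\theta)$ into itself and is invertible on it (hence onto). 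The same computation, but with $\psi$ replaced by $1/\psi$, gives the corresponding statement for $(\R^{-1})^*$.

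To carry out the expansion, I start from $\R^* e^n_{k,\ell}(x)=\sum_{y\in\zz} e^n_{k,\ell}(y)\, \R(y,x)$, substitute the contour-integral representation \eqref{eq:defR} of $\R$, and change summation variable $y \mapsto y+x$. Using the binomial identity for falling factorials $(y+x)_\ell = \sum_{m=0}^\ell \binom{\ell}{m}(y)_m (x)_{\ell-m}$, the expression factors as
\begin{equation*}
\R^* e^n_{k,\ell}(x) = \sum_{m=0}^\ell \binom{\ell}{m}(x)_{\ell-m}(u_k(n)/\theta)^x \sum_{y\in\zz}(y)_m \frac{1}{2\pi\I}\oint_{\gamma_\rin}\d w\, \frac{u_k(n)^y}{w^{y+1}}\psi(w).
\end{equation*}
Exactly as in the proof of Lem.~\ref{lem:Q-spaces}, I split the sum into $y\geq 0$ and $y<0$. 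For $y\geq 0$ I deform the contour to $\gamma_\rout$ so that $|w|>u_k(n)$ (this is where Assum.~\ref{assum:apsi} is used: $\psi$ is analytic on the whole annulus $A_{\rin,\rout}$, and $\rin<u_k(n)<\rout$ since $u_k(n)\in\{v_1,\dotsc,v_n\}$). The two geometric sums then evaluate to $\pm m!/(w-u_k(n))^{m+1}$ and their difference produces, by Cauchy's integral formula, a contour around $u_k(n)$. This gives the closed form
\begin{equation*}
\sum_{y\in\zz}(y)_m \frac{1}{2\pi\I}\oint_{\gamma_\rin}\d w\, \frac{u_k(n)^y}{w^{y+1}}\psi(w) = \frac{\d^m\psi}{\d w^m}(u_k(n)),
\end{equation*}
and substituting back yields the claimed triangular expansion with diagonal coefficient $\psi(u_k(n))$.

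The diagonal entry $\psi(u_k(n))$ is nonzero because of the non-vanishing assumption on $\psi$ on $A_{\rin,\rout}$ in Assum.~\ref{assum:apsi}; hence $\R^*$ restricted to $\V{n}(\vec v,\theta)$ is upper-triangular in the basis $\B{n}(\vec v,\theta)$ (with the blocks corresponding to distinct $u_k(n)$) and has nonzero diagonal, so it is a bijection of $\V{n}(\vec v,\theta)$ onto itself. An identical argument applied to $1/\psi$, which also satisfies Assum.~\ref{assum:apsi}, shows that $(\R^{-1})^*$ maps $\V{n}(\vec v,\theta)$ into itself; combined with $\R^*(\R^{-1})^*=(\R^{-1})^*\R^*=I$ on this finite-dimensional space, both operators are surjective. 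The only technical point to check carefully is the justification for swapping summation and integration and for deforming the contour across the annulus, which is handled using Assum.~\ref{assum:apsi} exactly as in the proof of Lem.~\ref{lem:Q-spaces}; I expect no further obstacles beyond this routine bookkeeping.
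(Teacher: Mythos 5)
Your proposal is correct and follows essentially the same route as the paper: it reproduces the contour-deformation argument from Lemma~\ref{lem:Q-spaces}, obtains the block-triangular expansion of $\R^*$ in the basis $\B{n}(\vec v,\theta)$ with diagonal entries $\psi(u_k(n))\neq 0$, and concludes surjectivity, handling $(\R^{-1})^*$ by the same computation applied to $1/\psi$ (the paper compresses this last step to ``it is enough to prove the statement for $\R^*$''; your remark that $\R^*(\R^{-1})^*=I$ on the finite-dimensional space makes that reduction explicit).
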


\begin{proof}
It is enough to prove the statement for $\R^*$.
The argument is in fact essentially the same as the one in the proof of Lem.~\ref{lem:Q-spaces}.
Using the definition \eqref{eq:defR} and repeating that argument,
\[\R^* e^{n}_{k, \ell}(z) = \sum_{x \in \zz} \left(\frac{u_k(n)}{\theta}\right)^{x} \frac{ (x)_\ell}{2\pi\I}\oint_{\gamma_\rrin}\d w\,\frac{\theta^{x-z}}{w^{x-z + 1}}\psi(w) = \sum_{m = 0}^\ell {\ell \choose m}\frac{\d^m}{\d w^m} \psi(w)\Big|_{w=u_k(n)} e^{n}_{k, \ell-m}(z).\]
As a consequence, $\R^*$ maps $\V{n}(\vec v, \theta)$ into itself, and the matrix of this map with respect to the basis $\B{n}(\vec v, \theta)$ is block diagonal, with blocks indexed by the index $k$ in \eqref{eq:basis}, and these blocks are triangular.
Moreover, the diagonal entries in the $k$-th block are given by the coefficients with $m=0$ in the above sum for each $\ell$; this coefficient equals $\psi(u_k(n))$, and Assum.~\ref{assum:apsi} guarantees that $\psi(u_k(n)) \neq 0$ (since by the assumption the function $\psi$ is analytic and non-zero in an annulus containing all speeds $v_i$, and hence all values $u_k(n)$).
This implies that the matrix of $\R^*$ with respect to the basis $\B{n}(\vec v, \theta)$ is non-singular, and hence that the map is onto.
\end{proof}

\begin{lem}\label{lem:biorth-unique}
There is a unique family of functions $(\Phi^n_k)_{k=0,\dotsc,n-1}$ satisfying the properties \bioneref--\bitworef.
\end{lem}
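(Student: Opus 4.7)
The plan is to reduce the existence-and-uniqueness statement to the injectivity of a natural evaluation map and dispose of that injectivity by induction on $n$. Since $\dim \V{n}(\vec v, \theta) = \sum_k \beta_k(n) = n$ matches the number of biorthogonality conditions coming from the $\Psi^n_{n-k}$ with $k=1,\dotsc,n$, the family $(\Phi^n_k)_{k=0,\dotsc,n-1}$ characterized by \bioneref and \bitworef exists and is unique if and only if the evaluation map
\[E\!:f\in\V{n}(\vec v,\theta)\longmapsto\Bigl(\sum_{x\in\zz}f(x)\Psi^n_{n-k}(x)\Bigr)_{k=1,\dotsc,n}\in\cc^n\]
is a linear isomorphism. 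Using the representation $\Psi^n_{n-k}(x) = (\R Q^{-1}_{(k,n]})(x,y_k)$ and the definition of the adjoint, one rewrites
\[E(f)_k = \bigl((Q_{(k,n]}^{-1})^*\R^* f\bigr)(y_k)=h_k(y_k), \qquad h_k := (Q_{k+1}^*)^{-1}\dotsm(Q_n^*)^{-1}g, \quad g:=\R^* f.\]
Since $\R^*$ is a bijection of $\V{n}(\vec v,\theta)$ by Lem.~\ref{lem:R-image}, the task reduces to showing that $g\in\V{n}(\vec v,\theta)\mapsto (h_k(y_k))_{k=1}^n$ is injective.

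I would then run an induction on $n$. The base case $n=1$ is trivial since $\V{1}(\vec v,\theta)=\spanning\{(v_1/\theta)^x\}$ and $h_1(y_1)=g(y_1)$ vanishes only for $g=0$. For the inductive step, given $g\in\V{n}(\vec v,\theta)$ with $h_k(y_k)=0$ for all $k$, set $\tilde g=(Q_n^*)^{-1}g$; by Lem.~\ref{lem:Q-spaces} this lies in $\V{n-1}(\vec v,\theta)$, and the quantities $h_k$ for $k\leq n-1$ coincide with those arising in the $(n-1)$-problem applied to $\tilde g$. The inductive hypothesis therefore forces $\tilde g=0$, so that $g\in\ker\bigl((Q_n^*)^{-1}|_{\V{n}(\vec v,\theta)}\bigr)$ while also satisfying $g(y_n)=h_n(y_n)=0$.

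The crux is then to identify this kernel, which I expect to be one-dimensional and spanned by the function $x\mapsto(v_n/\theta)^x$. This should follow by unpacking the block-triangular description of $(Q_n^*)^{-1}$ in the basis $\B{n}(\vec v,\theta)$ derived inside the proof of Lem.~\ref{lem:Q-spaces}: on the block $k^*$ with $u_{k^*}(n)=v_n$ the diagonal coefficients $c^n_{k^*}(\ell,\ell)$ all vanish while the subdiagonal ones $c^n_{k^*}(\ell,\ell-1)$ are non-zero, and on every other block the diagonal entries $c^n_{k}(\ell,\ell)$ are non-zero. A routine linear-algebra argument then identifies the kernel as $\spanning\{e^n_{k^*,0}\}=\spanning\{(v_n/\theta)^x\}$, and $g=c(v_n/\theta)^x$ together with $g(y_n)=c(v_n/\theta)^{y_n}=0$ gives $c=0$, closing the induction.

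The main obstacle will be this final kernel computation, which requires handling separately the case where $v_n$ is distinct from $v_1,\dotsc,v_{n-1}$ (so that $\beta_{k^*}(n)=1$ and the kernel comes from a single ``new'' eigenvalue) and the case where $v_n$ repeats an earlier speed (so $\beta_{k^*}(n)=\beta_{k^*}(n-1)+1$ and the nontrivial subdiagonal structure of the block must be exploited), together with a careful matching of the image of $(Q_n^*)^{-1}$ with the natural basis of $\V{n-1}(\vec v,\theta)$. Everything else is routine: the adjoint manipulation in the first paragraph uses only the definition of the adjoint and Lem.~\ref{lem:conv}, and the induction itself is standard.
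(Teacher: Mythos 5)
Your proof is correct, and it follows the same essential strategy as the paper's: use Lem.~\ref{lem:R-image} to dispose of the $\R^*$ conjugation, and then reduce everything to a linear-algebra fact that Lem.~\ref{lem:Q-spaces} makes accessible. Where the paper frames the remaining step as nonsingularity of the matrix $F$ with entries $(Q^*_{(n-\ell,n]})^{-1}f(y_{n-\ell})$, $f\in\B{n}$, and then simply asserts ``Lem.~\ref{lem:Q-spaces} implies that $F$ is a non-singular matrix,'' you reformulate it as injectivity of the evaluation map $g\mapsto\bigl((Q^*_{(k,n]})^{-1}g(y_k)\bigr)_k$ and actually prove it by induction on $n$. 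The crux of your argument---that $\ker\bigl((Q_n^*)^{-1}|_{\V{n}(\vec v,\theta)}\bigr)$ is one-dimensional and spanned by $x\mapsto(v_n/\theta)^x$, verified via the block-triangular structure of $(Q_n^*)^{-1}$ in both the ``new speed'' and ``repeated speed'' cases---is exactly the content the paper compresses into that assertion. In fact, your induction closely mirrors the one the paper runs separately in Lem.~\ref{lem:unique-solution} for the boundary value problem, which likewise treats the $v_{n-\ell}\neq u_p(n-\ell)$ and $v_{n-\ell}=u_p(n-\ell)$ blocks differently. So your approach is not really a different route, but a cleanly spelled out version of the paper's argument; the extra detail you provide for the nonsingularity step is genuinely welcome, since the paper leaves it implicit.
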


\begin{proof}
Lem.~\ref{lem:R-image} suggests that solving the problem \bioneref--\bitworef is equivalent to solving the following one: find functions $\big(\bar\Phi_k^{n}\big)_{k=0,\dots,n-1}$ such that
\begin{itemize}
\item[($\bar\star$)] $(Q^*_{(n-\ell, n]})^{-1} \bar\Phi_k^{n}(y_{n-k})=\uno{k=\ell}$ for each $k,\ell=0,\dotsc,n-1$.
\item[($\bar\star\bar\star$)] $\spanning\{x \in \zz \longmapsto \bar\Phi^{n}_{k}(x) : 0 \leq k < n \} = \V{n}(\vec v, \theta)$.
\end{itemize}
Indeed, we have $Q_{(k, n]}^{-1}(x,y_{k}) = \R^{-1}\Psi^n_{n - k}(x)$ (see \eqref{eq:defPsink}), so $(Q^*_{(n-\ell, n]})^{-1} \bar\Phi_k^{n}(y_{n-k})$ is equal to  $\sum_{x\in\zz}\R^{-1}\Psi^n_{n-k}(x)\bar\Phi_k^n(x)$ and thus the solutions to these two problems are related by the one-to-one correspondence $\bar{\Phi}^n_k = \R^* \Phi^n_k$. 
Then the lemma will follow if we prove that this new problem has a unique solution.

Property \bitworefbar means that the solution $(\bar \Phi^n_k)_{k=0,\dotsc,n-1}$ which we are looking for has to be given as
\begin{equation*}
\textstyle\bar\Phi^{n}_{k}(x) = \sum_{f \in \B{n}} \bar W(k, f) f(x).
\end{equation*}
for some square matrix $\bar W = (\bar W(k, f) : 0 \leq k < n, f \in \B{n})$, where we write $\B{n}$ for $ \B{n}(\vec v, \theta)$.
With this, showing that $\big(\bar\Phi_k^{n}\big)_{k=0,\dots,n-1}$ satisfies  \bionerefbar--\bitworefbar reduces to proving that the matrix $\bar W$ can be chosen so that property \bionerefbar is satisfied and, moreover, that the matrix is uniquely characterized by that property.

From the above formula for $\bar\Phi^{n}_{k}$ we have for each $k,\ell=0,\dotsc,n-1$ that
\[\textstyle(Q^*_{(n-\ell, n]})^{-1} \bar\Phi_k^{n} (y_{n-k}) = \sum_{f \in \B{n}} \bar W (k, f) (Q^*_{(n-\ell, n]})^{-1} f(y_{n-k})\]
For a fixed $k$, consider the square matrix $\big(F_{f, \ell}\big)_{f \in \B{n}, 0 \leq \ell < n}$ with entries $F_{f,\ell} = (Q^*_{(n-\ell, n]})^{-1} f(y_{n-k})$, so that the identity $(Q^*_{(n-\ell, n]})^{-1} \bar\Phi_k^{n} (y_{n-k})=\uno{k=\ell}$ can be written as $\bar W F=I$ ($I$ being the identity matrix of size $n$).
Lem.~\ref{lem:Q-spaces} implies that $F$ is a non-singular matrix, so the matrix $\bar W$ satisfying this identity is unique.
\end{proof}

By analogy with \eqref{eq:KernelK-particular} we use now the solution $(\Phi^n_k)_{k=0,\dotsc,n-1}$ of \bioneref--\bitworef to define the (extended) kernel
\begin{equation}\label{eq:K-schutz}
K(n_i,x_i;n_j,x_j)=-Q_{(n_i, n_j]}(x_i,x_j)\uno{n_i<n_j}+\sum_{k=1}^{n_j}\Psi^{n_i}_{n_i-k}(x_i)\Phi^{n_j}_{n_j-k}(x_j)
\end{equation}
for $n_i,n_j\in\set{N}$ and $x_i,x_j\in\zz$, which is our main object of interest.
In what follows section we will obtain exact formulas for the functions $\Phi^n_k$, which will yield explicit expressions for this kernel.

\begin{rem}\label{rem:caterpillars}
If we take $\psi(w) = \varphi(w)^t$ and $a_\ell(w) = \varphi(w)^{L_{\ell}-1}$, $\ell\in\set{N-1}$, then $Q_{(m, n]}=\alpha_{m+1}\dotsm\alpha_{n}\tts\phi^{(m,n)}$ for $\phi^{(m,n)}$ as given in \eqref{eq:phi-caterpillars}, while the functions $\Psi^n_{n-k}$ and $\Phi^n_{n-k}$ coincide with those defined in \eqref{eq:Psi-caterpillars} and the corresponding biorthogonalization problem, after dividing the first one by $\alpha_{n-k+1}\dotsm\alpha_{n}$ and multiplying the second one by the same factor.
This implies that, with these choices, the kernel $K(n_i,x_i;n_j,x_j)$ defined in \eqref{eq:K-schutz} equals the one appearing in \eqref{eq:KernelK-particular} with an additional conjugation $\prod_{\ell=1}^{n_j}\alpha_\ell/\prod_{\ell=1}^{n_i}\alpha_\ell$, and such a conjugation does not change the value of the Fredholm determinant in \eqref{eq:M_formula}.
In view of this and Lem. \ref{lem:TASEP_and_caterpillars}, the abstract setting of this section will allow us to study models such as the general systems of interacting caterpillars with right Bernoulli jumps introduced in Sec.~\ref{sec:caterpillars-rB}.
\end{rem}

\subsection{The boundary value problem}\label{sec:bdvpb}

Our goal now is to derive an explicit formula for the functions $(\Phi^n_k)_{k=0,\dotsc,n-1}$ satisfying the properties \bioneref--\bitworef. 
We will prove in Thm.~\ref{thm:h_heat_Q} that these functions are uniquely defined via the solutions of the boundary value problem
\ifthenelse{\boolean{final}}{%
  \mathtoolsset{showonlyrefs=false}
}{%
}
\begin{subnumcases}{\label{bhe}}
(Q_{n - \ell}^*)^{-1} h^{n}_{k}(\ell,z) = h^{n}_{k}(\ell+1,z), &  $\ell<k,\,z \in \zz$,\label{bhe1}\\ 
h^{n}_{k}(k,z) = (\theta / v_{n - k})^{y_{n-k}-z}, & $z \in \zz$,\label{bhe2}\\ 
h^n_k(\ell,y_{n-\ell}) = 0, & $\ell<k$,\label{bhe3} \\
\spanning\{x \in \zz \longmapsto h^n_k(\ell,x) : \ell \leq k < n \} \subseteq \V{n-\ell}(\vec v, \theta),\quad & $0 \leq \ell < n$, \label{bhe4}
\end{subnumcases}
for fixed $0\leq k<n$. 
\ifthenelse{\boolean{final}}{%
  \mathtoolsset{showonlyrefs=true}
}{%
}
Note that we are looking for solutions in the particular spaces appearing in \eqref{bhe4} (recall their definition in \eqref{eq:space}). 
We have opted here for a slightly weaker version of the boundary value problem compared to \cite{fixedpt, TASEPgeneral}, as we consider inclusions rather than equalities of the sets in \eqref{bhe4}. This will simplify the proof of Lem.~\ref{lem:Phi-formula}, where we show that particular functions satisfy all the conditions in \eqref{bhe}.
On the other hand, we will prove in the next lemma that if the problem \eqref{bhe} has a solution, then the inclusions are necessarily equalities.
In this lemma we also show that the problem \eqref{bhe} has at most a unique solution; existence will be proved later in Sec.~\ref{sec:rw}.
 
\begin{lem}\label{lem:unique-solution}
The boundary value problem \eqref{bhe} has at most one solution. 
Moreover, if $\big(h^n_k(\ell,\cdot)\big)_{0\leq\ell\leq k}$ solves \eqref{bhe}, then the inclusion in \eqref{bhe4} is for this solution an equality. 
\end{lem}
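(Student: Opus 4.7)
The plan is to exploit a single structural fact: the kernel of $(Q^*_{n-\ell})^{-1}$ acting on $\V{n-\ell}(\vec v,\theta)$ is one-dimensional, spanned by the exponential $x \mapsto (v_{n-\ell}/\theta)^x$. To establish this I would apply Lem.~\ref{lem:Q-spaces} with $n$ replaced by $n-\ell$, letting $k^*$ be the index with $u_{k^*}(n-\ell) = v_{n-\ell}$. On each block $k \neq k^*$ of the basis $\B{n-\ell}(\vec v,\theta)$ the operator $(Q^*_{n-\ell})^{-1}$ is triangular with nonzero diagonal and hence invertible on that block; on the $k^*$-block the diagonal entry $c^{n-\ell}_{k^*}(0,0)$ vanishes while $c^{n-\ell}_{k^*}(j,j-1) \neq 0$ for $j \geq 1$, so $e^{n-\ell}_{k^*,0}(x) = (v_{n-\ell}/\theta)^x$ lies in the kernel while the remaining $\beta_{k^*}(n-\ell)-1$ vectors of that block have linearly independent images. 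A dimension count then forces the kernel to be exactly one-dimensional.

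For uniqueness, let $d(\ell,\cdot)$ be the difference of two candidate solutions. Then $d$ inherits \eqref{bhe1} and \eqref{bhe4} together with the homogeneous boundary conditions $d(k,\cdot) \equiv 0$ and $d(\ell, y_{n-\ell}) = 0$ for $\ell < k$. Proceed by downward induction on $\ell$ starting from $d(k,\cdot) \equiv 0$: if $d(\ell+1,\cdot) \equiv 0$, then \eqref{bhe1} places $d(\ell,\cdot)$ in the one-dimensional kernel identified above, so $d(\ell,x) = c\,(v_{n-\ell}/\theta)^x$ for some scalar $c$. Evaluating at $x = y_{n-\ell}$ and invoking \eqref{bhe3} gives $c\,(v_{n-\ell}/\theta)^{y_{n-\ell}} = 0$, and since $(v_{n-\ell}/\theta)^{y_{n-\ell}} \neq 0$ this forces $c = 0$ and hence $d(\ell,\cdot) \equiv 0$.

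For the equality in \eqref{bhe4}, since $\dim\V{n-\ell}(\vec v,\theta) = n-\ell$ matches the number of functions $h^n_\ell(\ell,\cdot), \ldots, h^n_{n-1}(\ell,\cdot)$, it suffices to prove their linear independence. I would again induct downward on $\ell$, with base case $\ell = n-1$ trivial since $h^n_{n-1}(n-1,z) = (\theta/v_1)^{y_1 - z}$ is not identically zero. For the step, suppose $\sum_{k=\ell}^{n-1} c_k h^n_k(\ell,\cdot) = 0$ and apply $(Q^*_{n-\ell})^{-1}$: the $k = \ell$ term vanishes because \eqref{bhe2} makes $h^n_\ell(\ell,\cdot)$ a nonzero scalar multiple of $(v_{n-\ell}/\theta)^x$, which lies in the kernel, while for $k > \ell$ the term becomes $h^n_k(\ell+1,\cdot)$ via \eqref{bhe1}. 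The resulting identity $\sum_{k=\ell+1}^{n-1} c_k h^n_k(\ell+1,\cdot) = 0$ combined with the inductive hypothesis yields $c_{\ell+1} = \cdots = c_{n-1} = 0$, and then $c_\ell h^n_\ell(\ell,\cdot) = 0$ with $h^n_\ell(\ell,\cdot) \not\equiv 0$ gives $c_\ell = 0$.

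The main obstacle is the preliminary kernel identification in the presence of repeated speeds: without the block-triangular description of $(Q^*_{n-\ell})^{-1}$ on $\V{n-\ell}(\vec v,\theta)$ supplied by Lem.~\ref{lem:Q-spaces}, it is not obvious that multiplicities in $\vec v$ cannot inflate the kernel beyond one dimension. Once that structural point is in hand, both uniqueness and the span equality reduce to essentially the same short downward induction, driven by the single kernel vector $(v_{n-\ell}/\theta)^x$ and the fact that \eqref{bhe3} evaluates it at a point where it does not vanish.
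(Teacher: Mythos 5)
Your proof is correct and follows essentially the same approach as the paper's: both rely on Lem.~\ref{lem:Q-spaces} and a downward induction on $\ell$, with the key step in both parts being that $(Q^*_{n-\ell})^{-1}$ annihilates exactly the multiples of $(v_{n-\ell}/\theta)^x$ inside $\V{n-\ell}(\vec v,\theta)$. The one genuine improvement in your presentation is that you isolate this one-dimensional kernel as a standalone preliminary fact and then reuse it in both halves, and you prove the span equality by direct induction on linear independence rather than the paper's argument by contradiction via a maximal $\ell_*$; mathematically these are the same argument, but your organization makes the shared mechanism more transparent.
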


\begin{proof}
Assume that there exist two solutions of \eqref{bhe}, which we denote by $h^n_k(\ell, z)$ and $\bar{h}^n_k(\ell, z)$, $0\leq\ell\leq k$. We set $g^n_k(\ell, z) = h^n_k(\ell, z) - \bar h^n_k(\ell, z)$, which satisfies \eqref{bhe1} and \eqref{bhe4}.
We are going to show by induction, backwards in $\ell$, that $g^{n}_{k}(\ell,z) = 0$ for all $0\leq\ell\leq k$ and $z \in \zz$. 
The case $\ell = k$ follows directly from \eqref{bhe2}.
Now assume that $g^{n}_{k}(\ell+1,z) = 0$ for some $0 \leq \ell \leq k-1$, so that we need to prove that $g^{n}_{k}(\ell,z) = 0$.

The functions $h^n_k(\ell, z)$ and $\bar h^n_k(\ell, z)$ are in $\V{n-\ell}(\vec v, \theta)$, so the same is true for $g^n_k(\ell, z)$. Thus, by \eqref{eq:newbasis}, we can write 
\begin{equation}\label{eq:g-basis}
\textstyle g^n_k(\ell, z) = \sum_{p = 1}^{\nu(n - \ell)} \sum_{q = 0}^{\beta_p(n - \ell)-1} b_{p, q} e^{n - \ell}_{p, q}(z),
\end{equation}
for some real constants $b_{p,q}$. From \eqref{bhe1} and the induction hypothesis we have $(Q_{n - \ell}^*)^{-1} g^{n}_{k}(\ell,z) = g^n_k(\ell+1, z) = 0$, which can be written explicitly as 
\begin{equation}\label{eq:g-expansion}
\textstyle\sum_{p = 1}^{\nu(n - \ell)} \sum_{q = 0}^{\beta_p(n - \ell)-1} b_{p, q} (Q_{n - \ell}^*)^{-1} e^{n - \ell}_{p, q}(z) = 0.
\end{equation}
From Lem.~\ref{lem:Q-spaces} we conclude that $(Q_{n - \ell}^*)^{-1} e^{n - \ell}_{p, q}(z) = \sum_{m = 0}^{q} c^{n-\ell}_{p}(q, m) e^{n - \ell}_{p, m}(z)$. 
Hence, after an interchange of summations, \eqref{eq:g-expansion} can be rewritten as
\begin{equation*}
\textstyle\sum_{p = 1}^{\nu(n - \ell)} \sum_{m = 0}^{\beta_p(n - \ell)-1} \sum_{q = m}^{\beta_p(n - \ell)-1} b_{p, q} c^{n-\ell}_{p}(q, m) e^{n - \ell}_{p, m}(z) = 0,
\end{equation*}
which means that 
\begin{equation}\label{eq:system}
\textstyle\sum_{q = m}^{\beta_p(n - \ell)-1} b_{p, q} c^{n-\ell}_{p}(q, m) =0
\end{equation}
for all $p$ and $m$. 
Now we fix $p$ and consider \eqref{eq:system} as a system of equations in the unknowns $b_{p, q}$, $0 \leq q < \beta_p(n - \ell)$, for the given matrix of coefficients $c^{n-\ell}_{p}(q, m)$, $0 \leq m < \beta_p(n - \ell)$, $m \leq q < \beta_p(n - \ell)$.
 
If $v_{n-\ell} \neq u_p(n - \ell)$, then Lem.~\ref{lem:Q-spaces} implies that the matrix $c^{n-\ell}_{p}(q, m)$ in the system \eqref{eq:system} is triangular and non-singular, and we conclude that $b_{p, q} = 0$ for all $q$. If $v_{n-\ell} = u_p(n - \ell)$, then Lem.~\ref{lem:Q-spaces} yields $c^{n-\ell}_{p}(q, q) = 0$, and the system of equations \eqref{eq:system} turns to $\sum_{q = m + 1}^{\beta_p(n - \ell)-1} b_{p, q} c^{n-\ell}_{p}(q, m) =0$. The matrix of coefficients restricted to $0 \leq m < \beta_p(n - \ell)-1$, $m+1 \leq q < \beta_p(n - \ell)$ is triangular and non-singular, which implies again that $b_{p, q} = 0$ for all $1 \leq q < \beta_p(n - \ell)$.

Now consider the case of $p=p^*$ where $p^*$ is such that $v_{n-\ell} = u_{p_*}(n-\ell)$.
From the argument in the preceding paragraph we conclude that the expansion \eqref{eq:g-basis} simplifies to 
\begin{equation*}
g^n_k(\ell, z) = b_{p_*, 0} \bar e_{p_*, 0}(z).
\end{equation*} 
From \eqref{bhe3} we have $g^n_k(\ell, y_{n-\ell}) = 0$, and since $\bar e_{p_*, 0}(y_{n-\ell}) \neq 0$, we conclude that $b_{p_*, 0} = 0$.
This finishes the proof of $g^n_k(\ell, z) = 0$ for all $z \in \zz$.

We have proved that there is a unique solution $h^n_k(\ell, z)$ of \eqref{bhe}, so what is left to prove that the inclusion in \eqref{bhe4} is an equality for any solution. 
We will proceed by contradiction, so let us assume that there is $\ell_* < n-1$ such that the inclusion in \eqref{bhe4} for $\ell_*$ is strict.
We can take $\ell_*$ to be maximal, i.e., we can assume that \eqref{bhe4} holds with equality for $\ell_* < \ell < n$. 
The functions $h^n_k(\ell_*,x)$, $\ell_* \leq k < n$, are necessarily linearly dependent, so
\begin{equation}\label{eq:h-are-dependent}
\textstyle\sum_{k=\ell_*}^{n-1} a_k h^n_k(\ell_*, z) = 0 \qquad \forall ~z \in \zz
\end{equation}
for some real values $a_k$, not all equal $0$. Using the integral formula \eqref{eq:Q-inverse} and the exact value \eqref{bhe2} we can compute $(Q_{n - \ell_*}^*)^{-1} h^n_{\ell_*}(\ell_*, z) = 0$. Applying then $(Q_{n - \ell_*}^*)^{-1}$ and using \eqref{bhe1} yields 
\begin{equation}
\textstyle\sum_{k=\ell_*+1}^{n-1} a_k h^n_k(\ell_*+1, z) = 0 \qquad \forall ~z \in \zz.
\end{equation}
Since $\ell_*$ is maximal, the span of the functions $h^n_k(\ell_*+1, z)$ is $\V{n-\ell_* - 1}(\vec v, \theta)$ and the preceding identity may hold only if $a_k = 0$, $\ell_* < k < n$. Then \eqref{eq:h-are-dependent} simplifies to $ h^n_{\ell_*}(\ell_*, z) = 0$, which is a contradiction due to \eqref{bhe2}.
 \end{proof}
 
Now we are ready to solve the biorthogonalization problem \bioneref--\bitworef. 

\begin{thm}\label{thm:h_heat_Q}
Let $h^n_k(\ell, z)$, $0\leq\ell\leq k$, $z\in\zz$, be the unique functions satisfying \eqref{bhe}. Then the unique solution of the biorthogonalization problem \bioneref--\bitworef with respect to $(\Psi^n_k)_{k=0,\dotsc,n-1}$ is given by $(\Phi^n_k)_{k=0,\dotsc,n-1}$ with 
\begin{equation}\label{eq:h_heat_Q}
\Phi^n_k(x)= (\R^*)^{-1}h^n_k(0,x).
\end{equation}
\end{thm}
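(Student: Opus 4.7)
Uniqueness is already provided by Lem.~\ref{lem:biorth-unique}, so all that is needed is to exhibit a solution. By the bijective substitution $\bar\Phi^n_k = \R^*\Phi^n_k$ introduced in the proof of that lemma, the biorthogonalization problem \bioneref--\bitworef for $\Phi^n_k$ is equivalent to the reduced problem \bionerefbar--\bitworefbar for $\bar\Phi^n_k$. Hence the task reduces to verifying that the candidate $\bar\Phi^n_k(x):=h^n_k(0,x)$ satisfies \bionerefbar--\bitworefbar.

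The span condition \bitworefbar follows by reading \eqref{bhe4} at $\ell=0$, which gives $\spanning\{h^n_k(0,\cdot):0\leq k<n\}\subseteq\V{n}(\vec v,\theta)$, and then invoking Lem.~\ref{lem:unique-solution}, which upgrades this inclusion to equality. For the biorthogonality \bionerefbar, decompose
\begin{equation*}
(Q^*_{(n-\ell,n]})^{-1}=(Q^*_{n-\ell+1})^{-1}\cdots(Q^*_n)^{-1}
\end{equation*}
and peel off the factors one at a time, starting from the right, applied to $h^n_k(0,\cdot)$. In the range $0\leq \ell\leq k$ the recursion \eqref{bhe1} is valid at every step (the hypothesis $\ell'<k$ holds for $\ell'=0,\dotsc,\ell-1$), so after $\ell$ applications one obtains exactly $h^n_k(\ell,\cdot)$. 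Evaluating at $z=y_{n-\ell}$ and using the boundary condition \eqref{bhe2} for $\ell=k$ and \eqref{bhe3} for $\ell<k$ gives $\uno{k=\ell}$, as required.

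The case $\ell>k$ is where the only real subtlety lies, because \eqref{bhe1} is not postulated beyond $\ell'=k-1$. Extend the recursion by defining $h^n_k(\ell+1,\cdot):=(Q^*_{n-\ell})^{-1}h^n_k(\ell,\cdot)$ for $\ell\geq k$ and observe that the very first extended step annihilates the function. Indeed, by \eqref{bhe2}, $h^n_k(k,z)=(\theta/v_{n-k})^{y_{n-k}-z}$ is a constant multiple of the basis element $e^{n-k}_{p,0}$, where $p$ is the index with $u_p(n-k)=v_{n-k}$; Lem.~\ref{lem:Q-spaces} applied with $n$ replaced by $n-k$ and $\ell=0$ (the case $v_{n-\ell}=u_p(n-\ell)$) yields $c^{n-k}_p(0,0)=0$, so $(Q^*_{n-k})^{-1}h^n_k(k,\cdot)\equiv 0$. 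Therefore $h^n_k(\ell,\cdot)\equiv 0$ for every $\ell>k$, and the evaluation at $y_{n-\ell}$ gives $0$, completing \bionerefbar.

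The main obstacle is precisely this $\ell>k$ case; what makes it work is the careful calibration of the boundary condition \eqref{bhe2} to the speed $v_{n-k}$, which is exactly what triggers the annihilation coming from the factor $(v_{n-k}-w)$ at $w=v_{n-k}$ in the integral formula for $Q^{-1}_{n-k}$. Once this compatibility is spotted, everything reduces to a clean iteration of the recursion \eqref{bhe1} together with Lems.~\ref{lem:Q-spaces} and~\ref{lem:unique-solution}.
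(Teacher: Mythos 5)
Your proof is correct and mirrors the paper's own argument: you iterate \eqref{bhe1} for $\ell\leq k$ and exploit the calibration of the boundary condition \eqref{bhe2} to $v_{n-k}$, which forces $(Q^*_{n-k})^{-1}h^n_k(k,\cdot)\equiv0$ when $\ell>k$ (the paper verifies this vanishing directly from the integral formula \eqref{eq:Q-inverse}, while you invoke Lem.~\ref{lem:Q-spaces} — the same fact, packaged differently — and you handle the span condition through the reduced problem \bionerefbar--\bitworefbar rather than applying Lem.~\ref{lem:R-image} directly, but the two are equivalent). One trivial typo: the annihilation condition in your parenthetical should read $v_{n-k}=u_p(n-k)$, not $v_{n-\ell}=u_p(n-\ell)$, since Lem.~\ref{lem:Q-spaces} is applied with $n$ replaced by $n-k$.
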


\begin{proof}
The argument is similar to the proof of biorthogonality in \cite[Thm.~2.2]{fixedpt}. To prove \bioneref we write
\begin{align*}
  \textstyle\sum_{x\in\zz}\Psi^n_\ell(x)\Phi^n_k(x)&\textstyle= \sum_{x\in\zz}\R Q_{(n-\ell, n]}^{-1}(x,y_{n-\ell})(\R^*)^{-1}h^{n}_{k}(0,x) \\
  &\textstyle=\R^* (Q_{(n-\ell, n]}^{-1})^*(\R^*)^{-1}h^n_k(0,y_{n-\ell})
  =(Q_{(n-\ell, n]}^{-1})^* h^n_k(0,y_{n-\ell}).
\end{align*}
If $\ell \leq k$, then \eqref{bhe1} allows to write this expression as $h^n_k(\ell,y_{n-\ell})$, which according to \eqref{bhe2}-\eqref{bhe3} equals $\uno{k = \ell}$.
If $\ell > k$ we write $(Q_{(n-\ell, n]}^{-1})^* = (Q_{(n-\ell, n-k]}^{-1})^* (Q_{(n-k, n]}^{-1})^*$ and \eqref{bhe1} yields $(Q_{(n-\ell, n]}^{-1})^* h^n_k(0,y_{n-\ell}) = (Q_{(n-\ell, n-k]}^{-1})^* h^n_k(k,y_{n-\ell})=(Q_{(n-\ell, n-k)}^{-1})^*(Q_{n-k}^{-1})^* h^n_k(k,y_{n-\ell})$.
Using \eqref{bhe2} we have $(Q_{n-k}^{-1})^* h^n_k(k,y_{n-\ell})=\sum_{z \in \zz} (Q_{(n-\ell, n-k]}^{-1})^*(y_{n-\ell}, z) (\theta / v_{n - k})^{y_{n-k}-z}$, and this vanishes using \eqref{eq:Q-inverse} after a simple computation, so $(Q_{(n-\ell, n]}^{-1})^* h^n_k(0,y_{n-\ell}) = 0$.

Now we turn to \bitworef.
From \eqref{bhe4} we have $\spanning\{x \in \zz \longmapsto h^n_k(0,x) : \ell \leq k < n \} = \V{n}(\vec v, \theta)$ and Lem.~\ref{lem:R-image} implies that the same holds if we convolve the functions with $(\R^*)^{-1}$. 
\end{proof}

\subsection{Main result: representation in terms of random walk hitting times}\label{sec:rw}

\subsubsection{Preliminaries}

Let, for $\ell\geq1$,
\begin{equation}\label{eq:Q-ring}
\Qo_\ell(x,y) = \frac{1}{2\pi\I}\oint_{\gamma_\rrin}\d w\,\frac{\theta^{x-y-1}}{w^{x-y}} \frac{v_\ell - \theta}{v_\ell - w} = \frac{v_\ell - \theta}{\theta} (\theta / v_\ell)^{x-y} \uno{x > y},
\end{equation}
and for $0<\ell\leq n$
\begin{equation}\label{eq:Q-ring-many}
\Qo_{(\ell, n]}(x,y) = \Qo_{\ell+1} \cdots \Qo_n(x,y) = \frac{1}{2\pi\I}\oint_{\gamma_\rrin}\d w\,\frac{\theta^{x-y- n + \ell}}{w^{x-y - n + \ell+1}} \prod_{i = \ell+1}^n \frac{v_\ell - \theta}{v_i-w},
\end{equation}
which coincide with the functions \eqref{eq:Q-many} if we set $a_i \equiv 1$ and $\alpha_i = 1$ for all $i$.
Then the kernels \eqref{eq:Q} and \eqref{eq:Q-many} can be written as
\begin{equation}\label{eq:different-Qs}
Q_m = \Qo_m A_{m-1}, \qquad\qquad Q_{(k, m]} = \Qo_{(k, m]} A_{k} \cdots A_{m-1}
\end{equation}
for $m\in\set{N}$ and $0\leq k<m$, with
\begin{equation}\label{eq:A-def}
A_\ell(x,y) = \frac{1}{2\pi\I}\oint_{\gamma_\rrin}\d w\,\frac{\theta^{x-y}}{w^{x-y+1}} \frac{a_{\ell}(w)}{a_{\ell}(\theta)},
\end{equation}
$\ell=0,\dotsc,N-1$.
Note that, in view of \eqref{eq:convention0},
\[A_0=I\]
(and thus $Q_1=\Qo_1$).
We also have
\begin{equation}\label{eq:A-inverse}
A_\ell^{-1}(x,y) = \frac{1}{2\pi\I}\oint_{\gamma_\rrin}\d w\,\frac{\theta^{x-y}}{w^{x-y+1}} \frac{a_{\ell}(\theta)}{a_{\ell}(w)}.
\end{equation}
Note that, since the contour $\gamma_r$ does not include any of the $v_i$'s, $\Qo_{(\ell, n]}(x,y)$ vanishes whenever $y>x+\ell-n$.
Now let
\begin{equation}\label{eq:Q-ring-many-extension}
\bar \Qo_{(\ell, n]}(x,y) = -\frac{1}{2\pi\I}\oint_{\Gamma_{\vec v}}\d w\,\frac{\theta^{x-y- n + \ell}}{w^{x-y - n + \ell+1}} \prod_{i = \ell+1}^n \frac{v_i - \theta}{v_i-w},
\end{equation}
where $\Gamma_{\vec v}$ is a simple, positively oriented contour enclosing all the $v_i$'s but not the origin. We denote for $1 \leq m \leq n$
\begin{equation*}
(\vec v)_{m}^n = (v_m, \ldots, v_n),
\end{equation*}
and we claim that, for fixed $x\in\zz$,
\begin{equation}
\bar\Qo_{(\ell,n]}(x,\cdot)\in\V{n-\ell}((\vec v)_{\ell+1}^n, \theta)\qqand\Qo_{(\ell, n]}(x,y) = \bar \Qo_{(\ell, n]}(x,y) \quad\forall\, y<x;
\end{equation}
in this sense, we think of $\bar\Qo(x,\cdot)$ as an extension of $\Qo(x,\cdot)$ to $\V{n-\ell}((\vec v)_{\ell+1}^n, \theta)$.
To see the identity simply note that if $x > y$ then the residue at infinity of the integrand in \eqref{eq:Q-ring-many} vanishes and thus by Cauchy's formula, \eqref{eq:Q-ring-many} equals \eqref{eq:Q-ring-many-extension}.
That $\bar\Qo_{(\ell,n]}(x,\cdot)\in\V{n-\ell}((\vec v)_{\ell+1}^n,\theta)$ also follows from Cauchy's formula, since the integral in \eqref{eq:Q-ring-many-extension} is a sum of residues computed at the different values among $v_{\ell+1},\dotsc,v_n$.
Moreover, one can readily compute
\begin{equation}\label{eq:QobarQoinv}
\Qo^{-1}_{(k, n]} \bar\Qo_{(\ell, n]}=\bar\Qo_{(\ell, n]}\Qo^{-1}_{(k,n]}=\bar\Qo_{(\ell,k]}\;\;\text{for}\;\;\ell<k,
\qquad\Qo^{-1}_{(k, n]} \bar\Qo_{(\ell, n]}=\bar\Qo_{(\ell, n]}\Qo^{-1}_{(k,n]}=0\;\;\text{for}\;\;\ell\geq k.
\end{equation}

Next we introduce a new kernel
\begin{equation}\label{eq:Q-underline}
\Qpl_{\ell}(x,y) = \Qo_{\ell} A_{\ell}(x,y) = A_{\ell-1}^{-1}Q_\ell A_\ell(x,y)
=\frac{\alphapl_\ell}{2\pi\I}\oint_{\gamma_\rrin}\d w\,\frac{\theta^{x-y}}{w^{x-y}}\frac{a_{\ell}(w)}{v_\ell - w},
\end{equation}
$\ell\in\set{N-1}$, where 
\begin{equation}\label{eq:alphapl}
\alphapl_\ell = \frac{v_\ell-\theta}{a_{\ell}(\theta)\theta} = \frac{1}{\sum_{i \in \zz} (\theta / v_\ell)^i q_{\ell}(i)},
\end{equation}
which is well-defined thanks to Assump.~\ref{assum:q}\ref{item:q2}. 
As above we also write $\Qpl_{(\ell,n]}= \Qpl_{\ell+1}\dotsm\Qpl_n=\Qo_{(\ell,n]} A_{\ell+1} \dotsm A_{n}$, for $\ell\leq n$ in $\set{N-1}$, and we have
\begin{align}%
\Qpl_{(\ell, n]}(x,y) & = \frac{1}{2\pi\I}\oint_{\gamma_\rrin}\d w\,\frac{\theta^{x-y}}{w^{x-y - n + \ell + 1}} \prod_{i = \ell+1}^n \frac{\alphapl_i a_{i}(w)}{v_i-w},\\
(\Qpl_{(\ell,n]})^{-1}(x,y) &= \frac{1}{2\pi\I}\oint_{\gamma_\rrin}\d w\,\frac{\theta^{x-y}}{w^{x-y + n - \ell + 1}} \prod_{i = \ell+1}^n \frac{v_i-w}{\alphapl_i a_{i}(w)}.
\end{align}
We define an extension of $Q_{(\ell, n]}$ to $\V{n-\ell}((\vec v)_{\ell+1}^n, \theta)$ as follows:
\begin{equation}
\barQpl_{(\ell, n]}(x, y) = \bar\Qo_{(\ell, n]} A_{\ell+1} \dotsm A_{n}(x, y)
= -\frac{1}{2\pi\I}\oint_{\Gamma_{\vec v}} \d w\,\frac{\theta^{x-y}}{w^{x-y - n + \ell + 1}} \prod_{i = \ell+1}^n \frac{\alphapl_i a_i(w)}{v_i-w}.\label{eq:bar-Qpl-many}
\end{equation}
Here we choose $\Gamma_{\vec v}$ as above, with the additional restriction that the contour is contained in $\{w\in\cc\!:|w|>r\}$.
Each $a_i(w)$ is analytic in this region, so exactly as for $\bar\Qo_{(\ell,n]}$, we have $\barQpl_{(\ell,n]}(x,\cdot)\in\V{n-\ell}((\vec v)_{\ell+1}^n)$.
Moreover, the definition \eqref{eq:def-a} of $a_\ell$ implies that the coefficient of $w^k$ in the Laurent series for $\prod_{i=\ell+1}^na_i(w)$ vanishes for all $k\geq\sum_{i=\ell+1}^n\kappa_i$, and hence arguing again as for $\bar\Qo_{(\ell,n]}$ we have
\begin{equation}\label{eq:Qbar-equal}
\barQpl_{m}(x,y)=\Qpl_{m}(x,y)\;\; \forall x-y>\textstyle\kappa_m,
\qquad\barQpl_{(\ell,n]}(x,y)=\Qpl_{(\ell,n]}(x,y)\;\; \forall x-y>\textstyle\sum_{i=\ell+1}^n\kappa_i.
\end{equation}
We also use the notation $\Qpl_{[\ell,n]} = \Qpl_{(\ell-1,n]}$, and respectively for the other kernels. 

The kernel $\Qpl_{m}$ is Markov; we let $\Bpl_m$ be the time-inhomogeneous random walk which has transitions from time $m-1$ to time $m$, $m\in\set{N-1}$, with step distribution $\Qpl_m$.
We also define the stopping time
\begin{equation}
\taupl= \min\{m=0,\dotsc,N-1 : \Bpl_m> y_{m+1}\}.\label{eq:deftau-ul}
\end{equation}
Next for $n \geq 1$ and $0 \leq m < n$ define the kernels 
\begin{align}
\SM_{-n}(z_1, z_2) &= a_{n}(\theta) (\R(\Qpl_{[1, n]})^{-1}A_n)^* (z_1, z_2) = a_{n}(\theta) (\R Q^{-1}_{[1,n]})^*(z_1,z_2) \label{eq:SM} \\
& = \frac{1}{2\pi\I}\oint_{\gamma_\rrin}\d w\,\frac{\theta^{z_2-z_1}}{w^{z_2-z_1 + n + 1}} \psi(w)\frac{\prod_{i = 1}^n (v_i-w)}{\prod_{i = 1}^{n}\alphapl_i \prod_{i = 1}^{n-1}a_i(w)}, \nonumber \\
\SN_{(m, n]}(z_1, z_2)  &= a_{n}(\theta)^{-1} \barQpl_{(m, n]} A_{n}^{-1} \R^{-1}(z_1, z_2), \label{eq:SN}\\
& = - \frac{1}{2\pi\I}\oint_{\Gamma_{\vec v}} \d w\,\frac{\theta^{z_1-z_2}}{w^{z_1-z_2 - n + m + 1}}\psi(w)^{-1}\frac{\prod_{i = m+1}^{n} \alphapl_i \prod_{i = m+1}^{n-1} a_i(w)}{\prod_{i=m+1}^n(v_i-w)},
\end{align}
and 
\begin{equation}\label{eq:SN-epi}
\SN^{\epi(\vec y)}_{n}(z_1, z_2) = \ee_{\Bpl_{0} = z_1} \bigl[\SN_{(\taupl, n]}(\Bpl_{\taupl}, z_2) \uno{\taupl < n}\bigr].
\end{equation}

\subsubsection{Main result for the kernel}

The following is our main result:

\begin{thm}\label{thm:kernel-explicit}
Assume $y_{j} - y_{j+1}\geq\kappa_{j}$ for each $j\in\set{N-1}$. Then the kernel $K$ defined in \eqref{eq:K-schutz} can be expressed as
\begin{equation}\label{eq:kernel-explicit}
K(n_i,x_i;n_j, x_j) = -Q_{(n_i, n_j]} (x_i, x_j) \uno{n_i<n_j} + (\SM_{-n_i})^* \SN^{\epi(\vec y)}_{n_j} (x_i, x_j).
\end{equation}
\end{thm}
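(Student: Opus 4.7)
The strategy is to reduce the theorem to the identity
\[
\sum_{k=1}^{n_j}\Psi^{n_i}_{n_i-k}(x_i)\,\Phi^{n_j}_{n_j-k}(x_j)\;=\;(\SM_{-n_i})^*\SN^{\epi(\vec y)}_{n_j}(x_i,x_j),
\]
since the $-Q_{(n_i,n_j]}\uno{n_i<n_j}$ terms in \eqref{eq:K-schutz} and \eqref{eq:kernel-explicit} already agree. By Thm.~\ref{thm:h_heat_Q}, $\Phi^{n}_{k}=(\R^*)^{-1}h^{n}_{k}(0,\cdot)$, where $h^{n}_{k}$ is the unique solution of the boundary value problem \eqref{bhe}. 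The plan therefore splits into two pieces: produce an explicit hitting-time representation for $h^{n}_{k}$, and then reassemble the left-hand sum into the composition on the right-hand side.

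For the first piece I would pass from the kernels $Q_m$ to the Markov kernels $\Qpl_m$ of the random walk $\Bpl$, using the conjugation $Q_m=A_{m-1}\Qpl_m A_m^{-1}$ that follows from \eqref{eq:different-Qs} and \eqref{eq:Q-underline}. Because $\R$ and all the $A_\ell$ are convolution kernels (Lem.~\ref{lem:conv}), they pairwise commute, which turns \eqref{bhe1} into the backward one-step identity $g^n_k(\ell,\cdot)=\Qpl^*_{n-\ell}g^n_k(\ell+1,\cdot)$ for the transformed function $g^n_k(\ell,z):=A^*_{n-\ell}h^n_k(\ell,z)$. I would then guess that $g^n_k(\ell,z)$ equals, up to the geometric weight dictated by \eqref{bhe2}, an expectation under $\Bpl$ suitably started at $(\ell,z)$ of the event that $\Bpl$ first exceeds the curve $(y_{m+1})_m$ at the $k$-th index and lands at $y_{n-k}$ there. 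Equation \eqref{bhe1} then holds by the Markov property of $\Bpl$; the terminal condition \eqref{bhe2} is built into the definition; and the span condition \eqref{bhe4} follows by rewriting the candidate through the extended kernel $\barQpl$ of \eqref{eq:bar-Qpl-many}, whose second argument lies in $\V{\cdot}(\vec v,\theta)$ by Cauchy's formula at the poles $v_i$. Uniqueness (Lem.~\ref{lem:unique-solution}) identifies this candidate with $h^n_k$.

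For the second piece I would expand $\Psi^{n_i}_{n_i-k}(x_i)=\R Q^{-1}_{(k,n_i]}(x_i,y_k)$, use the factorization $Q^{-1}_{(k,n_i]}=A_{n_i}\Qpl^{-1}_{(k,n_i]}A_k^{-1}$ together with the commutativity of all the $A_\ell$ and $\R$ to extract $(\SM_{-n_i})^*(x_i,\cdot)$ (in the form provided by \eqref{eq:SM}) on the left of the sum, and then identify the residual $k$-summation. The latter pairs $\Qpl$-transition factors in the $z$ variable with $\Phi^{n_j}_{n_j-k}=(\R^*)^{-1}h^{n_j}_{n_j-k}(0,\cdot)$ from Part~1; after decomposing the hitting event according to the value of $\taupl$ and using \eqref{eq:Qbar-equal} and \eqref{eq:bar-Qpl-many} to replace $\Qpl$ by $\barQpl$ past the curve, the sum collapses into $\ee_{\Bpl_0=\cdot}[\SN_{(\taupl,n_j]}(\Bpl_{\taupl},x_j)\uno{\taupl<n_j}]=\SN^{\epi(\vec y)}_{n_j}(\cdot,x_j)$, with the $a_m(\theta)$ prefactors from \eqref{eq:SM} and \eqref{eq:SN} aligning along the way.

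The main obstacle is verifying the boundary condition \eqref{bhe3}, $h^n_k(\ell,y_{n-\ell})=0$ for $\ell<k$, for the candidate in Part~1. This is precisely where the separation hypothesis $y_j-y_{j+1}\geq\kappa_j$ is used essentially: by \eqref{eq:Qbar-equal}, $\Qpl_m$ agrees with its polynomial-valued extension $\barQpl_m$ exactly on jumps exceeding $\kappa_m$, and the separation condition guarantees that the walk started at $y_{n-\ell}$ cannot have already crossed the curve through a step lying in the discrepant region between $\Qpl$ and $\barQpl$. Without this hypothesis the candidate would pick up spurious boundary contributions from the tails of the step distributions and fail to vanish on the curve, breaking the identification with the probabilistic hitting-time formula.
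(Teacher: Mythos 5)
Your proposal is correct and takes essentially the same route as the paper: reduce to the one-point kernel $K^{(n)}$, construct $h^n_k$ as the $\V{\cdot}(\vec v,\theta)$-extension of a hitting probability for the walk $\Bpl$ via the barred kernels $\barQpl$ (Lems.~\ref{lem:barp-formula} and \ref{lem:Phi-formula}), then reassemble the $k$-sum $\sum_k\Psi^{n}_{n-k}\Phi^{n}_{n-k}$ into $(\SM_{-n})^*\SN^{\epi(\vec y)}_n$ by a last-hitting-time decomposition, with the separation hypothesis $y_j-y_{j+1}\geq\kappa_j$ entering precisely where you identify. The one spot where you are noticeably vaguer than the paper is the reassembly step: the paper first computes $\hat G^{(k)}_{0,n}=A_{k-1}^{-1}G^{(k)}_{0,n}A_n$ under the restriction $z_2\leq y_n-\kappa_n$, where $\Qpl$ and $\barQpl$ agree by \eqref{eq:Qbar-equal}, and then upgrades to all $z_2$ by uniqueness of the finite-dimensional $\V{n-k+1}((\vec v)_k^n,\theta)$-extension --- this is the precise mechanism behind what you informally call ``replacing $\Qpl$ by $\barQpl$ past the curve.''
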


\begin{rem}\label{rem:main}
\leavevmode
\begin{enumerate}[label=(\alph*)]
\item As explained in \cite[Rem.~5.16(b)]{TASEPgeneral}, the choice of parameter $\theta$ enters simply as a conjugation in the kernel \eqref{eq:kernel-explicit}. Indeed, if $\hat K$ is defined by the same formula but for a different value $\hat \theta$ satisfying Assum.~\ref{assum:q}, then the two kernels are related as $\hat K(n_i,x_i;n_j;x_j) = \left(\prod_{\ell = 0}^{n_j - 1}(\frac{\hat \alpha^+_\ell}{\alphapl_\ell}) / \prod_{\ell = 0}^{n_i - 1}(\frac{\hat\alpha^+_\ell}{\alphapl_\ell})\right) (\frac{\hat \theta}{\theta})^{x_i - x_j} K(n_i,x_i;n_j;x_j)$, where $\hat\alpha^+_\ell$ is defined in the same way as $\alphapl_\ell$ but using the value $\hat \theta$. This conjugation does not change the value of the Fredholm determinant of the kernel, which in our applications is all we are interested in (see Thm.~\ref{thm:biorth_general}).
In applications to scaling limits, the value of $\theta$ needs to be adjusted according to the average density of particles in the system, see Sec.~\ref{sec:caterpillars-mix} for an example.
\item In the case when all values $v_i$ are equal to $1$ and all functions $a_\ell$ are equal to $a$, we recover from \eqref{eq:kernel-explicit} the formula for the kernel obtained in \cite[Thm.~5.15]{TASEPgeneral}. 
\item In the case of discrete time TASEP with right Bernoulli jumps, sequential update and inhomogeneous rates (corresponding to letting the $v_i$'s be different, setting all $a_\ell$'s to be identically $1$, and choosing $\psi(w)=(1+w)^t$), formula \eqref{eq:kernel-explicit} was obtained recently and independently in \cite[Thm.~1.1]{bisiLiaoSaenzZigouras}.
In that article, the authors consider a more general version of the model where the jump rates are also allowed to depend on time in the following way: the $k$-th particle makes a right jump at time $t$ with probability $p_t q_k / (1+p_t q_k)$, for positive parameters $p_t$ and $q_k$ (here we are using the notation of \cite[Thm.~1.1]{bisiLiaoSaenzZigouras}; the values $p_t$ and $q_k$ should not be confused with the jump rates in Sec.~\ref{sec:right-Bernoulli}).
This more general version of the result for sequential TASEP can be recovered in our setting by choosing $v_k = q_k$ and $\psi(w)= \prod_{s = 1}^t (1+p_s w)$, as one can see by comparing \cite[Eqs.~1.1-1.2]{bisiLiaoSaenzZigouras} and \eqref{eq:SM}-\eqref{eq:SN};
the fact that a representation of the form \eqref{eq:mu_fixed} holds in this case (which ensures that our results are applicable) can be proved by composing the dynamics at successive time steps using the convolution result appearing in Appdx. A of \cite{TASEPgeneral}.
See also Rem.~\ref{rem:pfcat}(b).
\end{enumerate}
\end{rem}

The first step in the proof of the Thm.~\ref{thm:kernel-explicit} consists in deriving an explicit formula for the functions $\Phi^n_k$ in terms of the hitting time problem for the random walk $\Bpl_m$.
We turn to this task next; the proof of the theorem appears in Sec.~\ref{sec:formula-proof}.

In the proof it will be convenient to write \eqref{eq:kernel-explicit} in terms of the one-point kernel 
\begin{equation}\label{eq:kernel-explicit-one-point}
K^{(n)} = (\SM_{-n})^* \SN^{\epi(\vec y)}_{n}
\end{equation}
as
\begin{equation}\label{eq:kernel-explicit-2}
K(n_i,x_i;n_j, x_j) =  - Q_{(n_i, n_j]}(x_i, x_j) \uno{n_i<n_j} + Q^{-1}_{[1, n_i]} Q_{[1, n_j]} K^{(n_j)} (x_i, x_j).
\end{equation}

\subsubsection{Explicit formula for $\Phi^n_k$}

For $0 \leq \ell \leq k < n$ and $z\leq y_{n-\ell}- \kappa_{n-\ell}$ we define
\begin{equation}\label{eq:p-as-probability}
p^n_k(\ell, z) = \sum_{\eta > y_{n-k}} \pp_{\Bpl_{n-k-1} = \eta} (\Bpl_m \leq y_{m+1} \text{ for } n-k \leq m < n-\ell, \Bpl_{n-\ell} = z),
\end{equation}
which can also be thought of as a hitting time distribution for the walk $\Bpl_m$ moving backwards in time: more precisely, it corresponds to starting with the walk at $z$ at time $n-\ell$, and moving backwards in time hitting the strict epigraph of $(y_{m+1})_{\geq0}$ exactly at time $m=n-k-1$.
In the next result we will find a $\V{k-\ell + 1}((\vec v)_{n-k}^{n-\ell},\theta)$ extension of this function to all $z \in \zz$, which we denote by $\bar p^n_k(\ell,z)$:

\begin{lem}\label{lem:barp-formula}
Assume $y_{j} - y_{j+1}\geq\kappa_{j}$ for each $j\in\set{N-1}$ and let
\begin{multline}\label{eq:bar-p-formula}
\bar p^n_k(\ell, z) = \sum_{\eta > y_{n-k}} \barQpl_{[n-k, n-\ell]}(\eta, z) \\
 - \uno{\ell < k} \sum_{\eta > y_{n-k}} \sum_{\eta' \in \zz} \Qpl_{n-k} (\eta, \eta') \ee_{\Bpl_{n-k} = \eta'} \bigl[\barQpl_{(\taupl, n-\ell]}(\Bpl_{\taupl}, z) \uno{\taupl < n - \ell}\bigr]
\end{multline}
for $\ell \leq k$ and $z\in\zz$.
Then
\[\bar p^n_k(\ell,\cdot)\in\V{k-\ell + 1}((\vec v)_{n-k}^{n-\ell},\theta)\qqand\bar p^n_k(\ell, z)=p^n_k(\ell, z)\quad\forall~z\leq y_{n-\ell}- \kappa_{n-\ell}.\]
\end{lem}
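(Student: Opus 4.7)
The lemma makes two assertions: a space-membership claim for $\bar p^n_k(\ell,\cdot)$, and an equality with the hitting-time probability $p^n_k(\ell,z)$ on the range $z\leq y_{n-\ell}-\kappa_{n-\ell}$. I would address them in that order, since neither depends on the other.

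For the $\V{k-\ell+1}((\vec v)_{n-k}^{n-\ell},\theta)$ membership, the driving input is the extension property noted just after \eqref{eq:bar-Qpl-many}: for every fixed $x$, the function $\barQpl_{(r,s]}(x,\cdot)$ belongs to $\V{s-r}((\vec v)_{r+1}^{s},\theta)$. Applied to the first term of \eqref{eq:bar-p-formula}, writing $\barQpl_{[n-k,n-\ell]}=\barQpl_{(n-k-1,n-\ell]}$ shows that for each $\eta$ the function of $z$ lies in $\V{k-\ell+1}((\vec v)_{n-k}^{n-\ell},\theta)$, and the sum over $\eta$ preserves this. For the second term, on each realization with $\taupl=m^*$ the integrand is $\barQpl_{(m^*,n-\ell]}(\Bpl_{m^*},\cdot)$, which lies in $\V{n-\ell-m^*}((\vec v)_{m^*+1}^{n-\ell},\theta)$. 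Since $m^*\geq n-k$, both the dimension $n-\ell-m^*\leq k-\ell+1$ and the speed multiset $(\vec v)_{m^*+1}^{n-\ell}$ embed into $(\vec v)_{n-k}^{n-\ell}$, and the corresponding $\V$-spaces are nested (subspaces grow when speeds are added with their multiplicities). The expectation is a linear combination over such realizations and therefore also lies in the target space.

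For the equality on $z\leq y_{n-\ell}-\kappa_{n-\ell}$, the plan is to reduce every occurrence of $\barQpl$ to $\Qpl$ using \eqref{eq:Qbar-equal}, which states $\barQpl_{(r,s]}(x,y)=\Qpl_{(r,s]}(x,y)$ whenever $x-y>\sum_{i=r+1}^{s}\kappa_i$. For the first term, $\eta>y_{n-k}$ combined with $z\leq y_{n-\ell}-\kappa_{n-\ell}$ and the standing hypothesis $y_j-y_{j+1}\geq\kappa_j$ telescopes to $\eta-z>\sum_{j=n-k}^{n-\ell}\kappa_j$, so the substitution applies. For the second term, on $\{\taupl=m^*\}$ one has $\Bpl_{m^*}>y_{m^*+1}$, and the same telescoping gives $\Bpl_{m^*}-z>\sum_{j=m^*+1}^{n-\ell}\kappa_j$, again permitting the substitution. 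Once the replacement is done, each $\Qpl_{(r,s]}$ is a genuine transition kernel of the time-inhomogeneous walk $\Bpl$, so the two displayed terms become honest random-walk probabilities. The strong Markov property at $\taupl$ combined with inclusion–exclusion on the event that the walk hits the strict epigraph of $(y_{m+1})$ in the intermediate time window then identifies the difference with the constrained probability defining $p^n_k(\ell,z)$.

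The main subtlety I expect to manage is aligning the stopping-time conventions: $\taupl$ in \eqref{eq:deftau-ul} is defined for a walk started at time $0$, whereas inside $\ee_{\Bpl_{n-k}=\eta'}$ it must be interpreted as the first exceedance time of the walk restarted at $\eta'$ at time $n-k$. Careful tracking of this shift, together with folding the one-step kernel $\Qpl_{n-k}(\eta,\eta')$ into the walk started at $\Bpl_{n-k-1}=\eta$, is what reconciles the indexing so that the inclusion–exclusion yields exactly the range $n-k<m<n-\ell$ appearing in the definition of $p^n_k$; the remaining manipulations are routine bookkeeping.
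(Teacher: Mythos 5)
Your proposal is correct and follows essentially the same route as the paper: the space-membership claim is established from the fact that $\barQpl_{(r,s]}(x,\cdot)\in\V{s-r}((\vec v)_{r+1}^s,\theta)$ and the nesting of these spaces, while the equality on $z\leq y_{n-\ell}-\kappa_{n-\ell}$ follows by telescoping the gaps $y_j-y_{j+1}\geq\kappa_j$ to invoke \eqref{eq:Qbar-equal} and then an inclusion--exclusion/strong Markov identity matching the unrestricted hitting-time expectation against the constrained probability $p^n_k(\ell,z)$. The paper runs the equality argument in the opposite direction (deriving the formula from $p^n_k(\ell,z)$ and then upgrading $\Qpl$ to $\barQpl$), but the mathematical content, including the key cancellation for $\eta'>y_{n-k+1}$ that you flag as the "main subtlety" to manage, is identical.
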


\begin{proof}
For $\ell = k$ and $z\leq y_{n-k}-\kappa_{n-k}$ we have 
\begin{equation*}
\textstyle p^n_k(k, z) = \sum_{\eta > y_{n-k}} \pp_{B_{n-k-1} = \eta} (\Bpl_{n-k} = z) = \sum_{\eta > y_{n-k}} \Qpl_{n-k} (\eta, z)=\sum_{\eta > y_{n-k}} \barQpl_{n-k} (\eta, z),
\end{equation*}
where the last equality follows from \eqref{eq:Qbar-equal}, since inside the sum we have $\eta-z>\kappa_{n-k}$, showing that $\bar p^n_k(k, z)=p^n_k(k, z)$ for such $z$.
On the other hand, we know already that $\barQpl_{n-k}(\eta,\cdot)\in\V{1}(v_{n-k},\theta)$, i.e., that $\barQpl_{n-k}(\eta,z)=c\tts(v_1/\theta)^{z-\eta}$ for some $c\in\rr$, from which it is straightforward to deduce that $\bar p^n_k(k, z) = \sum_{\eta > y_{n-k}} \barQpl_{n-k}(\eta, z)$ is in $\V{1}(v_{n-k},\theta)$.

Next we turn to the case $\ell < k$.
For $z\leq y_{n-\ell}- \kappa_{n-\ell}$, $p^n_k(\ell, z)$ equals
\begin{equation}
\textstyle \sum_{\eta > y_{n-k}} \sum_{\eta' \leq y_{n-k +1}} \Qpl_{n-k} (\eta, \eta') \pp_{\Bpl_{n-k} = \eta'} (\Bpl_m \leq y_{m+1} \text{ for } n-k < m < n-\ell, \Bpl_{n-\ell} = z). \label{eq:p-k-some-derivation}
\end{equation}
The last probability can be written as (using the stopping time $\taupl$ defined in \eqref{eq:deftau-ul})
\begin{align}
&\textstyle \Qpl_{(n-k, n-\ell]}(\eta', z) - \pp_{\Bpl_{n-k} = \eta'} (\text{hit on } (n - k, n - \ell), \Bpl_{n-\ell} = z) \\
&\;\;\textstyle = \Qpl_{(n-k, n-\ell]}(\eta', z) - \sum_{m = n - k+1}^{n-\ell-1} \pp_{\Bpl_{n-k} = \eta'} (\taupl =m, \Bpl_{n-\ell} = z), \\
&\;\;\textstyle = \Qpl_{(n-k, n-\ell]}(\eta', z) - \sum_{m = n - k+1}^{n-\ell-1} \sum_{\eta'' > y_{m+1}} \pp_{\Bpl_{n-k} = \eta_{k-1}} (\taupl =m, \Bpl_{m} = \eta'') \Qpl_{(m, n-\ell]}(\eta'', z) \\
&\;\;\textstyle = \Qpl_{(n-k, n-\ell]}(\eta', z) - \ee_{\Bpl_{n-k} = \eta'} \bigl[\Qpl_{(\taupl, n-\ell]}(\Bpl_{\taupl}, z) \uno{\taupl < n - \ell}\bigr].
\end{align}
Plugging this into the above expression for $p^n_k(\ell, z)$ gives
\begin{multline}\label{eq:alt-p-formula}
\textstyle p^n_k(\ell, z) = \sum_{\eta > y_{n-k}} \Qpl_{n-k} \bP_{y_{n-k +1}} \Qpl_{(n-k, n-\ell]}(\eta, z) \\
\textstyle - \sum_{\eta > y_{n-k}} \sum_{\eta' \leq y_{n-k +1}} \Qpl_{n-k} (\eta, \eta') \ee_{\Bpl_{n-k} = \eta'} \bigl[\Qpl_{(\taupl, n-\ell]}(\Bpl_{\taupl}, z) \uno{\taupl < n - \ell}\bigr].
\end{multline}
Observe now that for $\eta'>y_{n-k+1}$ we have $\tau^+=n-k$ in the last expectation, which then equals $\Qpl_{(n-k,n-\ell]}(\eta',z)$.
Thus
\begin{align}
p^n_k(\ell, z) &= \textstyle\sum_{\eta > y_{n-k}} \Qpl_{n-k} \bP_{y_{n-k +1}} \Qpl_{(n-k, n-\ell]}(\eta, z) + \sum_{\eta > y_{n-k}} \Qpl_{n-k}\P_{y_{n-k+1}}\Qpl_{(n-k, n-\ell]}(\eta, z)\\
&\hspace{0.4in} - \textstyle\sum_{\eta > y_{n-k}} \sum_{\eta' \in \zz} \Qpl_{n-k} (\eta, \eta') \ee_{\Bpl_{n-k} = \eta'} \bigl[\Qpl_{(\taupl, n-\ell]}(\Bpl_{\taupl}, z) \uno{\taupl < n - \ell}\bigr]\\
&=\textstyle\sum_{\eta > y_{n-k}} \Qpl_{[n-k, n-\ell]}(\eta, z)  \\
&\hspace{0.4in}- \textstyle\sum_{\eta > y_{n-k}} \sum_{\eta' \in \zz} \Qpl_{n-k} (\eta, \eta') \ee_{\Bpl_{n-k} = \eta'} \bigl[\Qpl_{(\taupl, n-\ell]}(\Bpl_{\taupl}, z) \uno{\taupl < n - \ell}\bigr]\\
&=\textstyle\sum_{\eta > y_{n-k}} \barQpl_{[n-k, n-\ell]}(\eta, z)\label{eq:p-formula-pre}  \\
&\hspace{0.4in}- \textstyle\sum_{\eta > y_{n-k}} \sum_{\eta' \in \zz} \Qpl_{n-k} (\eta, \eta') \ee_{\Bpl_{n-k} = \eta'} \bigl[\barQpl_{(\taupl, n-\ell]}(\Bpl_{\taupl}, z) \uno{\taupl < n - \ell}\bigr],\label{eq:p-formula}
\end{align}
where the last equality follows as before from \eqref{eq:Qbar-equal}, because $\eta-z>y_{n-k}-y_{n-\ell}+\kappa_{n-\ell}\geq\sum_{i=n-k}^{n-\ell}\kappa_{i}$ for the first sum while inside the expectation in the second one we have $\Bpl_{\taupl}>y_{n-\taupl+1}$ so $\Bpl_{\taupl}-z\geq y_{n-\taupl+1}-y_{n-\ell}+\kappa_{n-\ell}\geq\sum_{i=n-\taupl+1}^{n-\ell}\kappa_{i}$.
This shows that $\bar p_k^n(\ell,z)=p_k^n(\ell,z)$ for $z\leq y_{n-\ell}-\kappa_{n-\ell}$.
To see that $\bar p_k^n(\ell,\cdot)\in\V{k-\ell+1}((\vec v)_{n-\ell}^{n-k},\theta)$ we proceed as in the case $\ell=k$, using for the first sum on the right hand side of \eqref{eq:bar-p-formula} that $\barQpl_{[n-k,n-\ell]}(\eta,\cdot)\in\V{k-\ell+1}((\vec v)^{n-\ell}_{n-k},\theta)$ while, for the second term, using that $\barQpl_{(\taupl,n-\ell]}(\eta,\cdot)\in\V{n-\ell-\taupl}((\vec v)^{n-\ell}_{\taupl},\theta)$, which is a subspace of $\V{k-\ell+1}((\vec v)^{n-\ell}_{n-k},\theta)$ for $n-k\leq\taupl<n-\ell$.
\end{proof}

Now we can show that the functions $\bar p^n_k(\ell, z)$ yield a solution to the system \eqref{bhe}.

\begin{lem}\label{lem:Phi-formula}
$y_{j} - y_{j+1}\geq\kappa_{j}$ for each $j\in\set{N-1}$ and let, for $0\leq\ell\leq k\leq n\leq N$ and $z\in\zz$,
\begin{align}
h^n_k(\ell, z) &= (A^{-1}_{n-\ell})^* \bar p^n_k(\ell, z) \\
&= \sum_{\eta > y_{n-k}} \barQpl_{[n-k, n-\ell]} A_{n-\ell}^{-1} (\eta, z) \label{eq:h-formula}\\
&\qquad - \uno{\ell < k} \sum_{\eta > y_{n-k}} \sum_{\eta' \in \zz} \Qpl_{n-k} (\eta, \eta') \ee_{\Bpl_{n-k} = \eta'} \bigl[\barQpl_{(\tau, n-\ell]}A_{n-\ell}^{-1} (\Bpl_{\taupl}, z) \uno{\taupl < n - \ell}\bigr].
\end{align}
Then $h^n_k(\ell,z)$ solves \eqref{bhe}.
In particular, the functions \eqref{eq:h_heat_Q} are given by
\begin{align}\label{eq:Phi-formula}
\Phi^n_{k}(x) &= \sum_{\eta > y_{n-k}} \barQpl_{[n-k, n]} A_{n}^{-1} \R^{-1} (\eta, x) \\
&\qquad - \uno{k > 0} \sum_{\eta > y_{n-k}} \sum_{\eta' \in \zz} \Qpl_{n-k} (\eta, \eta') \ee_{\Bpl_{n-k} = \eta'} \bigl[\barQpl_{(\taupl, n]} A_{n}^{-1}\R^{-1}(\Bpl_{\taupl}, x) \uno{\taupl < n}\bigr]. \nonumber
\end{align}
\end{lem}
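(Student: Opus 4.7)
The approach is to verify directly that the candidate $h^n_k(\ell,z) = (A^{-1}_{n-\ell})^*\bar p^n_k(\ell,z)$ satisfies all four conditions of the boundary value problem \eqref{bhe}; the formula \eqref{eq:Phi-formula} for $\Phi^n_k$ then drops out of Thm.~\ref{thm:h_heat_Q} upon setting $\ell=0$ and commuting the translation-invariant kernels $(A^{-1}_n)^*$ and $(\R^{-1})^*$.

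The easy conditions I would dispatch first. For \eqref{bhe4}, Lem.~\ref{lem:barp-formula} gives $\bar p^n_k(\ell,\cdot)\in\V{k-\ell+1}((\vec v)_{n-k}^{n-\ell},\theta)\subseteq\V{n-\ell}(\vec v,\theta)$, and $(A^{-1}_{n-\ell})^*$ preserves $\V{n-\ell}(\vec v,\theta)$ by the argument of Lem.~\ref{lem:R-image}, which applies because $a_{n-\ell}(\theta)/a_{n-\ell}(w)$ is analytic and non-vanishing on the relevant annulus by Assum.~\ref{assum:apsi}. For \eqref{bhe2}, when $\ell=k$ the second term in \eqref{eq:bar-p-formula} drops out, so $\bar p^n_k(k,z)=\sum_{\eta>y_{n-k}}\barQpl_{n-k}(\eta,z)$. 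Using $\barQpl_{n-k}A^{-1}_{n-k}=\bar\Qo_{n-k}$ and the rank-one identity $\bar\Qo_{n-k}(\eta,z)=\frac{v_{n-k}-\theta}{\theta}(\theta/v_{n-k})^{\eta-z}$ (a direct residue computation), the geometric sum over $\eta>y_{n-k}$ collapses to $(\theta/v_{n-k})^{y_{n-k}-z}$.

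For \eqref{bhe1}, I would decompose $(Q^*_{n-\ell})^{-1}=(\Qpl^{-1}_{n-\ell})^*(A^{-1}_{n-\ell-1})^*(A_{n-\ell})^*$ via \eqref{eq:different-Qs} and the commutativity of translation-invariant kernels, which reduces the required identity to $(\Qpl^{-1}_{n-\ell})^*\bar p^n_k(\ell,z)=\bar p^n_k(\ell+1,z)$. The key algebraic input is the right-composition rule $\barQpl_{(j,n-\ell]}\Qpl^{-1}_{n-\ell}=\barQpl_{(j,n-\ell-1]}$ for $j<n-\ell-1$ and $=0$ for $j=n-\ell-1$, which follows from \eqref{eq:QobarQoinv} after peeling off the commuting $A_i$'s. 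Applied to the two terms of $\bar p^n_k(\ell,z)$, this produces exactly $\bar p^n_k(\ell+1,z)$; the annihilation at $j=n-\ell-1$ converts the constraint $\uno{\tau^+<n-\ell}$ into $\uno{\tau^+<n-\ell-1}$, and the edge case $\ell+1=k$ works out because a walk started at time $n-k$ cannot satisfy $\tau^+<n-k$.

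The main obstacle is \eqref{bhe3}: $h^n_k(\ell,y_{n-\ell})=0$ for $\ell<k$. The difficulty is that Lem.~\ref{lem:barp-formula} only guarantees $\bar p^n_k(\ell,z)=p^n_k(\ell,z)$ for $z\leq y_{n-\ell}-\kappa_{n-\ell}$, so at $z=y_{n-\ell}$ we are working with the extension and cannot read off the boundary value from the probabilistic interpretation. My plan is to expand $(A^{-1}_{n-\ell})^*\bar p^n_k(\ell,y_{n-\ell})$ using the integral formula for $A^{-1}_{n-\ell}$, combine the two terms of \eqref{eq:bar-p-formula} into a single contour integral over $\Gamma_{\vec v}$, and show that after this recombination the integrand contains an extra factor $(v_{n-\ell}-w)/a_{n-\ell}(w)\cdot a_{n-\ell}(\theta)$ producing a pole-free integrand away from the $v_i$'s, together with a vanishing geometric sum at $z=y_{n-\ell}$ that uses the separation assumption $y_{n-k}-y_{n-\ell}\geq\sum_{j=n-k}^{n-\ell-1}\kappa_j$. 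If the direct computation becomes unwieldy, I would fall back on a backward induction in $\ell$ (starting from $\ell=k-1$, where $h^n_k(k-1,y_{n-k+1})$ can be evaluated explicitly via \eqref{bhe1} applied to the already computed $h^n_k(k,\cdot)=(\theta/v_{n-k})^{y_{n-k}-\cdot}$) combined with the commutation $A^{-1}_{n-\ell}\Qo^{-1}_{n-\ell+1}=\Qo^{-1}_{n-\ell+1}A^{-1}_{n-\ell}$ to propagate the vanishing boundary value.
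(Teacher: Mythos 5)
Your strategy is sound in outline, and your treatment of \eqref{bhe1}, \eqref{bhe2} and \eqref{bhe4} is essentially correct: the decomposition $(Q^*_{n-\ell})^{-1}=(A^{-1}_{n-\ell-1})^*(\Qpl^{-1}_{n-\ell})^*(A_{n-\ell})^*$ (all these kernels commute), the right-composition rule coming from \eqref{eq:QobarQoinv}, the cancellation that kills the second term when $\ell+1=k$, the residue at $v_{n-k}$ for \eqref{bhe2}, and the reduction of \eqref{bhe4} to Lem.~\ref{lem:barp-formula} plus the image-preserving argument for $(A^{-1}_{n-\ell})^*$ are all in line with what the paper does (your handling of \eqref{bhe4} is slightly slicker, as the paper instead recomputes the sum \eqref{eq:QA-sum} directly via a contour integral).

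The genuine gap is \eqref{bhe3}, and neither of your two plans closes it. The primary plan (``combine the two terms of \eqref{eq:bar-p-formula} into a single contour integral over $\Gamma_{\vec v}$'') cannot work as stated: the second term of $\bar p^n_k(\ell,z)$ is an expectation over the stopping time $\taupl$ whose value determines \emph{which} kernel $\barQpl_{(\taupl,n-\ell]}$ appears, so it is not a single translation-invariant kernel and hence not a single contour integral. (The ``extra factor $(v_{n-\ell}-w)/a_{n-\ell}(w)$'' you predict is also off: $A^{-1}_{n-\ell}$ contributes $a_{n-\ell}(\theta)/a_{n-\ell}(w)$, which cancels the $a_{n-\ell}(w)$ in $\barQpl_{[m,n-\ell]}$, but the $1/(v_{n-\ell}-w)$ pole stays.) The fallback plan is also broken: writing $h^n_k(k-1,z)=Q^*_{n-k+1}\hs h^n_k(k,z)=\sum_x Q_{n-k+1}(x,z)\hs(\theta/v_{n-k})^{y_{n-k}-x}$ gives a sum whose $x\to+\infty$ tail behaves like $\sum_x (v_{n-k}/v_{n-k+1})^x$, which diverges whenever $v_{n-k}\geq v_{n-k+1}$ — so $Q^*_{n-\ell}$ simply cannot be applied to $h^n_k(\ell+1,\cdot)$, only the \emph{inverse} $(Q^*_{n-\ell})^{-1}$ acting in the other direction makes sense, and the boundary value at $z=y_{n-\ell}$ does not propagate under convolution anyway.

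The missing idea is the paper's three-step argument: (i) for $z\leq y_{n-\ell}$, use the separation hypothesis $y_j-y_{j+1}\geq\kappa_j$ to show that all relevant arguments $x-y$ exceed $\sum_i\kappa_i$, which (by the vanishing residue at infinity, using \eqref{eq:def-a}) lets one replace $\barQpl_{[m,n-\ell]}A^{-1}_{n-\ell}$ by $\Qpl_{[m,n-\ell)}\Qo_{n-\ell}$ in \emph{both} terms of \eqref{eq:h-formula}; (ii) recognize the resulting expression as $\sum_{\eta>y_{n-k}}\pp_{(\Bpl)^{(n-\ell)}_{n-k-1}=\eta}(\cdots,(\Bpl)^{(n-\ell)}_{n-\ell}=z)$ for a walk $(\Bpl)^{(n-\ell)}$ identical to $\Bpl$ except that its \emph{last} step has distribution $\Qo_{n-\ell}$; and (iii) observe that $\Qo_{n-\ell}(\eta',y_{n-\ell})=0$ for $\eta'\leq y_{n-\ell}$, while the penultimate position $\eta'$ is forced to satisfy $\eta'\leq y_{n-\ell}$, so the probability at $z=y_{n-\ell}$ is zero. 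Without replacing the $A^{-1}_{n-\ell}$-modified last step by the strictly-right-moving $\Qo_{n-\ell}$, one cannot read off the vanishing boundary value probabilistically, and there is no algebraic identity that substitutes for it.
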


\begin{proof}
The second equality in \eqref{eq:h-formula} follows from \eqref{eq:bar-p-formula}. Now we show that $h^n_k(\ell,z)$ satisfies \eqref{bhe1}.
We have
\begin{align}\label{eq:p-bar-proof}
&\textstyle(Q_{n - \ell}^*)^{-1} h^n_k(\ell,z) = \sum_{\eta > y_{n-k}}  \barQpl_{[n-k, n-\ell]} A_{n-\ell}^{-1} Q^{-1}_{n - \ell}(\eta, z) \\
&\textstyle\qquad - \sum_{\eta > y_{n-k}} \sum_{\eta' \in \zz} \Qpl_{n-k} (\eta, \eta') \ee_{\Bpl_{n-k} = \eta'} \bigl[\barQpl_{(\tau, n-\ell]} A_{n-\ell}^{-1} Q^{-1}_{n - \ell}(\Bpl_{\taupl}, z) \uno{\taupl < n - \ell}\bigr]. \nonumber
\end{align}
By \eqref{eq:different-Qs}, \eqref{eq:bar-Qpl-many} and \eqref{eq:QobarQoinv} we have that, for $m < n-\ell$, 
\begin{align*}
\barQpl_{[m, n-\ell]} A_{n-\ell}^{-1} Q^{-1}_{n - \ell} &= (\bar \Qo_{[m, n-\ell]} A_{m} \cdots A_{n-\ell}) A_{n-\ell}^{-1}(\Qo^{-1}_{n - \ell} A^{-1}_{n - \ell - 1}) \\
& = \bar \Qo_{[m, n-\ell-1]} A_{m} \cdots A_{n-\ell-2} = \barQpl_{[m, n-\ell-1]} A^{-1}_{n - \ell - 1},
\end{align*}
while for $m = n - \ell$ this expression vanishes. Hence, 
\begin{multline*}
\textstyle(Q_{n - \ell}^*)^{-1} h^n_k(\ell,z) = \sum_{\eta > y_{n-k}} \bigl( \barQpl_{[n-k, n-\ell-1]} A_{n-\ell-1}^{-1} \bigr)(\eta, z) \\
\textstyle- \uno{\ell < k-1} \sum_{\eta > y_{n-k}} \sum_{\eta' \in \zz} \Qpl_{n-k} (\eta, \eta') \ee_{\Bpl_{n-k} = \eta'} \bigl[\barQpl_{(\tau, n-\ell-1]} A_{n-\ell-1}^{-1}(\Bpl_{\taupl}, z) \uno{\taupl < n - \ell-1}\bigr],
\end{multline*}
which is exactly $h^n_k(\ell+1,z)$, so \eqref{bhe1} holds.

To show \eqref{bhe2} we start by using \eqref{eq:bar-Qpl-many} and \eqref{eq:A-inverse} to write
\begin{equation}\label{eq:barQ-and-A}
\textstyle\barQpl_{[m, n-\ell]} A_{n-\ell}^{-1}(x, y) = -\frac{1}{2\pi\I}\oint_{\Gamma_{\vec v}} \d w\,\frac{\theta^{x-y}}{w^{x-y - n + \ell + m}} \prod_{i = m}^{n-\ell} \frac{\alphapl_i a_i(w)}{v_i-w} \frac{a_{n-\ell}(\theta)}{a_{n-\ell}(w)}.
\end{equation}
Since we assumed that $\theta$ is smaller than the $v_i's$, the above contour can be chosen such that $\theta < |w|$ and then using this formula in the case $m=n-k$, $\ell=k$ we get
\begin{align*}
h^n_k(k, z) &= \textstyle \sum_{\eta > y_{n-k}} \barQpl_{n-k} A_{n-k}^{-1} (\eta, z) = - \frac{1}{2\pi\I}\oint_{\Gamma_{v_{n-k}}}\d w\,\frac{\theta^{y_{n-k}-z}}{w^{y_{n-k}-z}} \frac{v_{n-k} - \theta}{(v_{n-k}-w) (w - \theta)}.
\end{align*}
The contour encloses only the simple pole at $v_{n-k}$, and evaluating the residue we get $h^n_k(k, z) = (\theta / v_{n-k})^{y_{n-k}-z}$, which is what we want.

Next we check \eqref{bhe3}.
From \eqref{eq:def-a} we have that that the coefficient of $w^k$ in the Laurent series for $\prod_{i=m}^{n-\ell-1}a_i(w)$ vanishes for all $k\geq\sum_{i=m}^{n-\ell-1}\kappa_i$, and as a consequence that the integrand in \eqref{eq:barQ-and-A} has a vanishing residue at infinity for $x - y > \sum_{i=m}^{n-\ell-1} \kappa_i$.
Cauchy's formula, \eqref{eq:Q-underline}, \eqref{eq:Q-many}, and \eqref{eq:barQ-and-A} then imply that for such $x,y$,
\begin{equation}\label{eq:QpQpQ}
\begin{split}
\textstyle\barQpl_{[m, n-\ell]} A_{n-\ell}^{-1}(x, y) &\textstyle= \frac{1}{2\pi\I}\oint_{\gamma_r}\,\frac{\theta^{x-y}}{w^{x-y - n + \ell + m}} \prod_{i = m}^{n-\ell} \frac{\alphapl_i a_i(w)}{v_i-w} \frac{a_{n-\ell}(\theta)}{a_{n-\ell}(w)} = \Qpl_{[m, n-\ell]} A_{n-\ell}^{-1}(x, y)\\
&\textstyle=\Qpl_{[m, n-\ell)} \Qo_{n-\ell}(x, y).
\end{split}
\end{equation}
Then for $z \leq y_{n-\ell}$ the first term in \eqref{eq:h-formula} equals 
$\sum_{\eta > y_{n-k}} \Qpl_{[n-k, n-\ell)} \Qo_{n-\ell}(\eta, z)$,
because $\eta - z > y_{n-k} - y_{n-\ell} \geq \sum_{i=n-k}^{n-\ell-1} \kappa_i$ by our assumption on the $y_i$'s.
Arguing similarly, since $\Bpl_{\taupl} > y_{\taupl + 1}$, for $z \leq y_{n-\ell}$ the expectation in \eqref{eq:h-formula} equals
$\ee_{\Bpl_{n-k} = \eta'} \bigl[\Qpl_{(\tau, n-\ell)}\Qo_{n-\ell}(\Bpl_{\taupl}, z) \uno{\taupl < n - \ell}\bigr]$.
Hence for such $z$ we have
\begin{multline}
\textstyle h^n_k(\ell, z) = \sum_{\eta > y_{n-k}} \Qpl_{[n-k, n-\ell)}\Qo_{n-\ell}(\eta, z) \\
\textstyle - \sum_{\eta > y_{n-k}} \sum_{\eta' \in \zz} \Qpl_{n-k} (\eta, \eta') \ee_{\Bpl_{n-k} = \eta'} \bigl[\Qpl_{(\tau, n-\ell)}\Qo_{n-\ell} (\Bpl_{\taupl}, z) \uno{\taupl < n - \ell}\bigr].
\end{multline}
Let now $(\Bpl)^{(n-\ell)}$ be the random walk defined like $\Bpl$ except that the step from time $n-\ell-1$ to time $n-\ell$ has distribution $\Qo_{n-\ell}$.
Then in the same way as we rewrote \eqref{eq:p-as-probability} in the form \eqref{eq:p-formula}, we can write the preceding expression as
\[\textstyle h^n_k(\ell, z) = \sum_{\eta > y_{n-k}} \pp_{(\Bpl)^{(n-\ell)}_{n-k-1} = \eta} ((\Bpl)^{(n-\ell)}_m \leq y_{m+1} \text{ for } n-k < m < n-\ell, (\Bpl)^{(n-\ell)}_{n-\ell} = z).\]
The last probability can be written as 
\[\textstyle\sum_{\eta' \leq y_{n-\ell}} \pp_{(\Bpl)^{(n-\ell)}_{n-k-1} = \eta} ((\Bpl)^{(n-\ell)}_m \leq y_{m+1} \text{ for } n-k < m < n-\ell, (\Bpl)^{(n-\ell)}_{n-\ell-1} = \eta')\Qo_{n-\ell}(\eta', z),\]
and setting $z=y_{n-\ell}$ we get the required identity $h^n_k(\ell, y_{n-\ell}) = 0$ because, from \eqref{eq:Q-ring}, $\Qo_{n-\ell}(\eta', y_{n-\ell}) = 0$ for $\eta'-y_{n-\ell}\leq0$.

It remains to prove \eqref{bhe4}, i.e., that for each $0 \leq \ell < n$ the functions $h^n_k(\ell,\cdot)$, $\ell \leq k < n$, span $\V{n-\ell}(\vec v,\theta)$. For this, we use the definitions \eqref{eq:A-def}, \eqref{eq:Q-ring-many-extension} and \eqref{eq:bar-Qpl-many} to compute
\begin{align}
\textstyle\sum_{\eta > y_{m}} \barQpl_{[m, n-\ell]} A_{n-\ell}^{-1} (\eta, z) &\textstyle= \sum_{\eta > y_{m}} \bar\Qo_{[m, n-\ell]} A_{m} \dotsm A_{n-\ell-1}(\eta, z) \label{eq:QA-sum}\\
&\textstyle= - \frac{1}{2\pi\I}\oint_{\Gamma_{\vec v}} \d w\,\frac{\theta^{y_{m}-z} (v_{n-\ell}-\theta)}{w^{y_{m}-z - n+\ell + m} (w - \theta)} \frac{\prod_{i = m}^{n-\ell-1} \alphapl_i a_i(w)}{\prod_{i = m}^{n-\ell} (v_i-w)},
\end{align}
where the contour $\Gamma_{\vec v}$ is such that $|w| > \theta$ (it can be taken such due to Assump.~\ref{assum:q}). Cauchy's formula implies that this is an element of $\V{n - \ell - m + 1}((\vec v)_m^{n-\ell}, \theta)$ as a function of $z$. The functions \eqref{eq:h-formula} can be written as linear combinations of \eqref{eq:QA-sum}, for $n-k \leq m \leq n - \ell$, and hence each function $h^n_k(\ell, \cdot)$ belongs to $\V{k - \ell + 1}((\vec v)_{n-k}^{n-\ell}, \theta)$. Since $\V{k - \ell + 1}((\vec v)_{n-k}^{n-\ell}, \theta) \subset \V{n-\ell}(\vec v, \theta)$, we get the required inclusion \eqref{bhe4}.
\end{proof}

\subsubsection{Proof of Thm.~\ref{thm:kernel-explicit}}
\label{sec:formula-proof}

Consider the one-point kernel $K^{(n)}(z_1,z_2)$
\begin{equation}\label{eq:Kn}
K^{(n)}(z_1,z_2) = K(n,z_1;n,z_2) = \sum_{k=1}^{n}\Psi^{n}_{n-k}(z_1)\Phi^{n}_{n-k}(z_2).
\end{equation}
From the definition \eqref{eq:defPsink}/\eqref{eq:defPsinkext} of the functions $\Psi^{n}_{n-k}$ we readily get 
\begin{equation}\label{eq:K-from-one-to-many}
K(n_i,x_i;n_j,x_j)=-Q_{(n_i, n_j]}(x_i,x_j)\uno{n_i<n_j}+ Q_{(n_i, n_j]} K^{(n_j)}(x_i, x_j),
\end{equation}
where we take $Q_{(m,n]}=Q^{-1}_{[n,m)}$ if $m>n$.
In view of this and the definition \eqref{eq:SM} of $\SM_{-n}$, \eqref{eq:kernel-explicit} will follow if we show that for any $n\in\set{N}$,
\begin{equation}\label{eq:toshow}
K^{(n)}=(\SM_{-n})^*\SN^{\epi(\vec y)}_n.
\end{equation}

Using \eqref{eq:defPsink} and \eqref{eq:h_heat_Q}, we rewrite the right hand side of \eqref{eq:Kn} as
\begin{equation}
\textstyle K^{(n)}(z_1,z_2)=\sum_{k=1}^{n}\Psi^{n}_{n - k}(z_1)\Phi^{n}_{n - k}(z_2)
= \sum_{k=1}^{n} \R Q^{-1}_{[1, n]} G^{(k)}_{0, n} \R^{-1} (z_1,z_2) \label{eq:kernel-second-part}
\end{equation}
with
\begin{equation}\label{eq:defGkn}
\textstyle G^{(k)}_{0, n} (z_1, z_2) = Q^{[1, k]} (z_1, y_k) h^n_{n-k}(0, z_2).
\end{equation}
Let also
\begin{equation}\label{eq:defhatGkn}
\hat{G}^{(k)}_{0, n}(z_1, z_2) = A_{k-1}^{-1} G^{(k)}_{0, n} A_{n}(z_1, z_2)= A_{k-1}^{-1}Q^{[1, k]} (z_1, y_k)\bar p^n_{n-k}(0, z_2),
\end{equation}
where we used \eqref{eq:h-formula}.
Using Lem.~\ref{lem:barp-formula} and \eqref{eq:alt-p-formula} 
together with \eqref{eq:different-Qs} we get, for $z_2\leq y_{n}- \kappa_{n}$, 
\begin{multline}
\textstyle\hat G^{(k)}_{0, n} (z_1, z_2) = Q_{[1, k)} \Qo_k (z_1, y_k) \sum_{\eta > y_{k}} \sum_{\eta' \leq y_{k+1}} \Qpl_k(\eta, \eta') \Qpl_{(k, n]} (\eta', z_2)\\
\textstyle- Q_{[1, k)} \Qo_k (z_1, y_k) (z_1, y_k) \sum_{\eta > y_{k}} \sum_{\eta' \leq y_{k + 1}} \Qpl_{k} (\eta, \eta') \ee_{\Bpl_{k} = \eta'} \bigl[ \Qpl_{(\taupl, n]} (\Bpl_{\taupl}, z_2) \uno{\taupl < n}\bigr]. \label{eq:G-k}
\end{multline}
Recall that $\Qo_{k}(z_1,z_2) = \frac{v_k - \theta}{\theta} (\theta / v_{k})^{z_1 - z_2} \uno{z_1 > z_2}$.
On the other hand, as in \eqref{eq:Q-simple} we have $\Qpl_{k}(z_1,z_2) = \alphapl_k(\theta / v_{k})^{z_1-z_2}$ for $z_1-z_2>\kappa_k$.
Thus, since $y_k-y_{k+1}\geq\kappa_k$ we have, for $\eta'\leq y_{k+1}$,
\begin{equation*}
\textstyle\Qo_{k}(z,y_{k}) \sum_{\eta > y_{k}} \Qpl_{k}(\eta, \eta') = \frac{v_k - \theta}{\theta} (\theta / v_{k})^{z - y_{k}}\tfrac{\theta}{v_{k} - \theta} \alphapl_{k} (\theta / v_{k})^{y_k - \eta'} \uno{z > y_{k}} = \Qpl_{k}(z, \eta') \uno{z > y_{k}}.
\end{equation*}
Using this identity in our last expresion for $\hat G^{(k)}_{0,n}$ we get, still for $z_2\leq y_n-\kappa_n$
\begin{align*}
&\textstyle\hat G^{(k)}_{0,n} (z_1,z_2) = \textstyle Q_{[1, k)} \P_{y_k} \Qpl_{k} \bP_{y_{k + 1}} \Qpl_{(k, n]} (z_1, z_2) \\
&\textstyle\hspace{1.4in}- \sum_{\eta' \leq y_{k + 1}} Q_{[1, k)} \P_{y_k} \Qpl_{k} (z_1, \eta') \ee_{\Bpl_{k} = \eta'} \bigl[ \Qpl_{(\taupl, n]} (\Bpl_{\taupl}, z_2) \uno{\taupl < n}\bigr] \\
&\qquad\textstyle= Q_{[1, k)} \P_{y_k} \Qpl_{[k, n]} (z_1, z_2) 
 - \sum_{\eta' \in \zz} Q_{[1, k)} \P_{y_k} \Qpl_{k} (z_1, \eta') \ee_{\Bpl_{k} = \eta'} \bigl[ \Qpl_{(\taupl, n]} (\Bpl_{\taupl}, z_2) \uno{\taupl < n}\bigr].
\end{align*}
By its definition \eqref{eq:defhatGkn} and using Lem.~\ref{lem:barp-formula}, the left hand side is in $\V{n-k+1}((\vec v)_k^n,\theta)$ as a function of $z_2$.
On the other hand, if we replace $\Qpl_{[k, n]}$ by $\barQpl_{[k, n]}$ and $\Qpl_{(\taupl, n]}$ by $\barQpl_{(\taupl, n]}$ in the last line then the result extends that expression to $\V{n-k+1}((\vec v)_k^n,\theta)$ as a function of $z_2$ (which follows from arguing as in the proof of Lem.~\ref{lem:barp-formula}).
$\V{n-k+1}((\vec v)_k^n,\theta)$ is a finite-dimensional vector space, whence it is easy to see that such an extension is unique, so we conclude that
\[\textstyle\hat G^{(k)}_{0,n} (z_1,z_2) =Q_{[1, k)} \P_{y_k} \barQpl_{[k, n]} (z_1, z_2) 
 - \sum_{\eta' \in \zz} Q_{[1, k)} \P_{y_k} \Qpl_{k} (z_1, \eta') \ee_{\Bpl_{k} = \eta'} \bigl[ \barQpl_{(\taupl, n]} (\Bpl_{\taupl}, z_2) \uno{\taupl < n}\bigr]\]
for all $z_1,z_2\in\zz$.
Using this and \eqref{eq:defhatGkn} in \eqref{eq:kernel-second-part} we get 
\begin{align*}
&K^{(n)}(z_1,z_2)=\textstyle\sum_{k=1}^{n} \R Q^{-1}_{[1, n]}A_{k-1} \hat G^{(k)}_{0, n} A_{n}^{-1}\R^{-1} (z_1,z_2) \\
&\quad\textstyle = \sum_{k=1}^{n} \R Q^{-1}_{[1, n]}A_{k-1} Q_{[1, k)} \P_{y_k} \barQpl_{[k, n]} A_{n}^{-1}\R^{-1} (z_1,z_2) \\
&\qquad\textstyle - \sum_{k=1}^{n}\sum_{\eta' \in \zz} \R Q^{-1}_{[1, n]} A_{k-1} Q_{[1, k)} \P_{y_k} \Qpl_{k} (z_1, \eta') \ee_{\Bpl_{k} = \eta'} \bigl[ \barQpl_{(\taupl, n]} A_{n}^{-1}\R^{-1} (\Bpl_{\taupl}, z_2) \uno{\taupl < n}\bigr].
\end{align*}
From \eqref{eq:Q-underline} we have $A_{k-1} Q_{[1, k)} = \Qpl_{[1, k)}$ (recall $A_0=I$), so using \eqref{eq:SM} and \eqref{eq:SN} we can write
\begin{multline}
K^{(n)}(z_1,z_2)=\textstyle\sum_{k=1}^{n} \SM_{-n} \Qpl_{[1, k)} \P_{y_k} \SN_{[k, n]}(z_1,z_2) \\
\textstyle- \sum_{k=1}^{n}\sum_{\eta' \in \zz} \SM_{-n} \Qpl_{[1, k)} \P_{y_k} \Qpl_{k} (z_1, \eta') \ee_{\Bpl_{k} = \eta'} \bigl[ \SN_{(\taupl, n]} (\Bpl_{\taupl}, z_2) \uno{\taupl < n}\bigr].
\end{multline}
Thus all that is left to prove \eqref{eq:toshow} is to show that $\SN^{\epi(\vec y)}_{n}(\eta,z)$ equals
\[\textstyle\sum_{k=1}^{n}\Qpl_{[1, k)} \P_{y_k} \SN_{[k, n]}(\eta,z)\\
\textstyle- \sum_{k=1}^{n}\sum_{\eta' \in \zz}\Qpl_{[1, k)} \P_{y_k} \Qpl_{k} (\eta, \eta') \ee_{\Bpl_{k} = \eta'} \bigl[ \SN_{(\taupl, n]} (\Bpl_{\taupl}, z) \uno{\taupl < n}\bigr]\]
or, after multiplying by $\R A_n$ on the right, that
\begin{multline}\label{eq:toshow2}
\ee_{\Bpl_0=\eta}\big[\barQpl_{(\taupl,n]}(\Bpl_{\taupl},z)\uno{\taupl<n}\big]=\textstyle\sum_{k=1}^{n}\Qpl_{[1, k)} \P_{y_k} \barQpl_{[k, n]}(\eta,z)\\
\textstyle- \sum_{k=1}^{n}\sum_{\eta'\in\zz}\Qpl_{[1, k)}\P_{y_k}\Qpl_{k} (\eta, \eta') \ee_{\Bpl_{k} = \eta'} \bigl[ \barQpl_{(\taupl, n]} (\Bpl_{\taupl}, z) \uno{\taupl < n}\bigr]
\end{multline}
for all $\eta,z\in\zz$.

Arguing as above, both sides of \eqref{eq:toshow2} are in $\V{n}(\vec v,\theta)$ as functions of $z$ so it is enough to prove the identity for all $\eta\in\zz$ and all $z\leq y_n-\kappa_n$ for which $\barQpl_{[k, n]}$ and $\barQpl_{(\taupl, n]}$ in \eqref{eq:toshow2} can be replaced by $\Qpl_{[k, n]}$ and $\Qpl_{(\taupl, n]}$.
The right hand side becomes, using the strong Markov property,
\begin{multline}
\textstyle\sum_{k=1}^{n}\left(\Qpl_{[1, k)}\P_{y_k} \Qpl_{[k, n]}(\eta,z)-\sum_{\eta'\in\zz}\Qpl_{[1, k)}\P_{y_k}\Qpl_{k} (\eta, \eta') \ee_{\Bpl_{k} = \eta'} \bigl[ \Qpl_{(\taupl, n]} (\Bpl_{\taupl}, z) \uno{\taupl < n}\bigr]\right)\\
\textstyle=\sum_{k=1}^{n}\left(\Qpl_{[1, k)}\P_{y_k} \Qpl_{[k, n]}(\eta,z)-\Qpl_{[1, k)}\P_{y_k}\Qpl_{k} (\eta, \eta') \pp_{\Bpl_{k} = \eta'} \bigl(\Bpl_{n}=z,\,\taupl < n\bigr)\right).
\end{multline}
The first term inside the parenthesis is the probability that the walk $\Bpl_m$ goes from $\eta$ at time $0$ to $z$ at time $n$ and that it is above $y_{k}$ at time $k-1$, while the second term is the probability that the same happens and that the walk hits goes above $\vec y$ again after time $k-1$.
The difference is thus the probability that the walk goes from $\eta$ to $z$, goes above $y_{k}$ at time $k-1$ and stays below $\vec y$ after that, so the sum in $k$ is nothing but last hitting time decomposition of $\pp_{B_0=\eta}(B_n=z,\,\taupl<n)$, which is exactly the left hand side of \eqref{eq:toshow2} with $\barQpl_{(\taupl,n]}$ replaced by $\Qpl_{(\taupl,n]}$.
This yields the desired identity.

\begin{rem}\label{rem:pfcat}
\leavevmode
\begin{enumerate}[label=(\alph*)]
\item The scheme which we have used to prove the theorem is more complicated than the one used for \cite[Thm.~5.15]{TASEPgeneral}, due to the inhomogeneous speeds $v_i$ and, particularly, the inhomogeneous values of $\kappa_i$.
Besides the necessary additional care needed when performing manipulations with the hitting probabilities due to the fact that the random walk is now time-inhomogeneous and the additional technical difficulties coming from the fact that the space $\V{n}(\vec v, \theta)$ is no longer essentially made out of polynomials, an important difficulty here is that the operators $A_k$ cannot be removed from the kernel \eqref{eq:kernel-second-part} by conjugation, which can be done, and simplifies the argument, in the case where they do not depend on $k$.
\item Our proof of Thm.~\ref{thm:kernel-explicit} is based on manipulating the kernel $K^{(n)}(z_1,z_2)$ under the  restriction $z_2\leq y_{n}- \kappa_{n}$ and then extending the resulting formula to all $z_2 \in \zz$.
An alternative derivation of the formula \eqref{eq:kernel-explicit} from \eqref{eq:K-schutz}, which avoids the need to work with a restricted variable and then extending, was developed in the proof of \cite[Prop.~4.6]{bisiLiaoSaenzZigouras} in the setting of right Bernoulli TASEP with inhomogeneous speeds.
The method, which should be applicable in our setting too, uses a clever double induction argument based on proving formulas of the type \eqref{eq:kernel-explicit} for some intermediate kernels. 
\end{enumerate}
\end{rem}

\section{Application to particle systems}
\label{sec:application}

As shown in Secs. 2 and 3 of \cite{TASEPgeneral}, the finite dimensional-distributions of the heads of systems of caterpillars for several TASEP-like dynamics can be written in the form \eqref{eq:mu_fixed} in the case of equal speeds and lengths.
The argument provided there extends without any difference to systems of caterpillars with general speeds and lengths.
A version of the Fredholm determinant formula \eqref{eq:caterpillars}/\eqref{eq:caterpillars-Kt} for these systems can then be obtained from the arguments of this article, by applying Thm.~\ref{thm:kernel-explicit} to the kernel \eqref{eq:KernelK-particular} defined via the functions \eqref{eq:phi-caterpillars}, \eqref{eq:Psi-caterpillars} and the corresponding biorthogonal functions, i.e., by taking $\psi(w) = \varphi(w)^{t}$, $a_i(w) = \varphi(w)^{L_i - 1}$ and $\kappa_i = L_i - 1$ in the setting of Thm.~\ref{thm:kernel-explicit} with the specific choice of $\varphi$ corresponding to each example.
This provides the proof of Props.~\ref{prop:TASEP}-\ref{prop:PushASEP}. 

In this section we study the explicit kernels $\K_t$ appearing in \eqref{eq:caterpillars} for a few particular examples.
In Sec.~\ref{sec:two-speeds} we consider two-speed TASEP with half-periodic initial condition, in continuous and discrete times and with both sequential and parallel updates.
In this case, there are two particle blocks within which the particles have equal speeds.
In particular, for two-speed TASEP in continuous time we recover the formula from \cite{TwoSpeed}.
In Sec.~\ref{sec:caterpillars-mix} we consider a version of discrete time TASEP where particles with sequential and parallel update are mixed, and show that it converges to the KPZ fixed point for general one-sided initial data (with arbitrary particle density).
Finally, in Sec.~\ref{sec:caterpillar-block} we study a variant of TASEP with sequential update where we modify the memory lengths of a macroscopic block of particles located in the bulk of the system.

\subsection{Two-speed variants of TASEP}\label{sec:two-speeds}

In \cite{TwoSpeed} a version of continuous time TASEP is considered where the first $M$ particles have jump rate $\alpha>0$ and the remaining ones have jump rate $1$.
The focus of that paper was the case of $2$-periodic initial state $X_0(i) = 2(M - i)$, $i \geq 1$ (which is chosen so that rate $1$ particles start on the negative even integers and the additional rate $\alpha$ particles are placed to the right of those), for which the associated biorthogonal functions $\Psi^{n}_{k}$ and $\Phi^{n}_{k}$ were computed explicitly, leading to a Fredholm determinant formula for the multipoint distribution of the process.
This formula was further used in that paper to compute the long time scaling limit of the process, which leads to an explicit ``process diagram'' for the model describing how its asymptotic fluctuations depend on the value of $\alpha$ and the characteristic direction used in the scaling.

In this section we will consider the two-speed setting for discrete and continuous time TASEP.
More precisely, we consider the kernel $K$ studied in Sec. \ref{sec:biorth} %
with $a_\ell(w) = (1+w)^\kappa$, $\kappa\in\{0,1\}$ and speeds given by
\begin{equation}\label{eq:TwoSpeeds_speeds-general}
v_i = 
\begin{cases}
\alpha, &1 \leq i \leq M,\\
\beta, &i > M,
\end{cases}
\end{equation}
for some $M\geq1$ and two real parameters $\alpha, \beta > 0$.
The function $\psi$ will initially remain general (subject to the assumptions of Sec. \ref{sec:biorth}).
In the case $\psi(w) = e^{t w}$ and $\kappa=0$, the kernel corresponds to the two-speed version of continuous time TASEP, with the values $\alpha$ and $\beta$ corresponding to the jump rates of the particles in the respective blocks.
In the case $\psi(w) = (1+w)^{t}$ the kernel describes two-speed right Bernoulli TASEP in discrete time, with either sequential ($\kappa =0$) or parallel ($\kappa=1$) update; in this last case, and recalling that the values $v_i$ are equal to $p_i/q_i$, where $p_i$ is the probability of the $i$-th particle making a jump, the choice \eqref{eq:TwoSpeeds_speeds-general} can be written as
\begin{equation}
p_i = 
\begin{cases}
\frac{\alpha}{1 + \alpha}, &1 \leq i \leq M,\\
\frac{\beta}{1 + \beta}, &i > M.
\end{cases}
\end{equation}
By employing other choices of $\psi$ and $a_\ell$ one recovers, in the same way, two-speed versions of the other TASEP variants, but the formulas which we will derive in what follows depend on the specific choice of $a_\ell$, so for simplicity we restrict to this setting.

Our main goal here will be to show how versions of the formulas from \cite{TwoSpeed} for the $2$-periodic initial state $y_i = 2(M - i)$, $i \geq 1$, can be derived in the current setting using our results.
With those formulas in hand, a similar analysis can be performed to recover their process diagrams for general two-speed TASEP variants.
More generally, one could attempt to use these formulas to study the process diagram in the case of general (right-finite) initial data.
We leave this for future work.

To simplify notation in this section, we are going to derive a formula only for the one-point kernel \eqref{eq:kernel-explicit-one-point}; the formula for the multipoint kernel then follows from \eqref{eq:kernel-explicit-2}.
The kernel can be written as
\begin{equation}\label{eq:Kn-RW-comp1}
K^{(n)}(x, x')=(\SM_{-n})^* \P_{y_1} \SN_{[1, n]}(x, x') + (\SM_{-n})^* \bP_{y_1} \SN^{\epi(\vec y)}_{n}(x, x'),
\end{equation}
where the functions \eqref{eq:SM} and \eqref{eq:SN} are given for $n > M$ by
\begin{align}
\textstyle (\SM_{-n})^*(z_1,z_2) &\textstyle = \bigl(\prod_{i = 1}^{n-1}\alphapl_i \bigr)^{-1} \frac{\theta^{z_1-z_2}}{2\pi\I}\oint_{\Gamma_0}\d u\,\frac{[\frac{u(\alpha-u)}{1 + \kappa u}]^M [\frac{u(\beta - u)}{1 + \kappa u}]^{n - M}}{u^{z_1-z_2+2n+1}} (1 + \kappa u) \psi(u), \label{eq:SM-two-speeds}\\
\textstyle \SN_{[1, n]}(z_1,z_2) &\textstyle = - \bigl(\prod_{i = 1}^{n-1}\alphapl_i \bigr) \frac{\theta^{z_1 - z_2}}{2\pi\I}\oint_{\Gamma_{\alpha, \beta}}\d w\,\frac{w^{z_2-z_1 + 2n - 1}}{[\frac{w(\alpha - w)}{1 + \kappa w}]^{M} [\frac{w(\beta - w)}{1 + \kappa w}]^{n-M}} (1 + \kappa w)^{-1} \psi(w)^{-1},\label{eq:SN-two-speeds}
\end{align}
where we used the trivial identity $(1 + u)^\kappa = 1 + \kappa u$ for $\kappa = 0$ or $1$.
Here and throughout the section we use the subscripts in the integration contours to indicate which poles they include.
The first part of the kernel \eqref{eq:Kn-RW-comp1} can be written as 
\begin{equation}\label{eq:K1-new}
\textstyle (\SM_{-n})^* \P_{y_1} \SN_{[1, n]}(x, x')=\frac{\theta^{x-x'}}{(2\pi\I)^2} \oint_{\Gamma_0}\d u \oint_{\Gamma_{\alpha, \beta}}\d w\, \frac{[\frac{u(\alpha-u)}{1 + \kappa u}]^M [\frac{u(\beta - u)}{1 + \kappa u}]^{n - M}}{[\frac{w(\alpha - w)}{1 + \kappa w}]^{M} [\frac{w(\beta - w)}{1 + \kappa w}]^{n-M}} \frac{w^{x'-y_1+2n}}{ u^{x-y_1+2n+1}} \frac{1}{u-w} \frac{(1 + \kappa u) \psi(u)}{(1 + \kappa w) \psi(w)}.
\end{equation}
For the other part it is convenient to decompose the product according to the two blocks:
\begin{equation}\label{eq:K-periodic-1}
\begin{split}
\textstyle(\SM_{-n})^* \bP_{y_1} \SN^{\epi(\vec y)}_{n}(x, x')&=\textstyle \sum_{z \leq y_1} (\SM_{-n})^*(x, z) \ee_{\Bpl_{0}=z}\bigl[\SN_{(\taupl, n]}(\Bpl_\taupl,x')\uno{\taupl<M}\bigr] \\
&\textstyle \qquad + \sum_{z \leq y_1} (\SM_{-n})^*(x, z) \ee_{\Bpl_{0}=z}\bigl[\SN_{(\taupl, n]}(\Bpl_\taupl,x')\uno{M \leq \taupl<n}\bigr],
\end{split}
\end{equation}
Next we will use this decomposition to compute the second term in \eqref{eq:Kn-RW-comp1} explicitly in the case of $2$-periodic initial data.

\subsubsection{$2$-periodic initial data}

Throughout the rest of this section, we fix the choice $y_i=2(M-i)$, $i\geq1$.
We will compute each of the two sums on the right hand side of \eqref{eq:K-periodic-1} separately.
We start by computing the hitting probabilities for the random walk $\Bpl$ introduced in Sec.~\ref{sec:rw} corresponding to our choices.
The distribution of the $\ell$-th step of this random walk is given by the kernel \eqref{eq:Q-underline}, which is equal in our case to
\begin{equation*}
\textstyle \Qpl_{\ell}(x,x') =\frac{\alphapl_\ell}{2\pi\I}\oint_{\Gamma_0}\d w\,\frac{\theta^{x-x'} (1+w)^\kappa}{w^{x-x'}(v_\ell - w)} = \alphapl_{\ell} (\theta / v_\ell)^{x-x'} (\uno{x - x' \geq 1} + \kappa v_\ell \uno{x - x' \geq 2}),
\end{equation*}
where $\alphapl_\ell=\frac{v_\ell-\theta}{(1+\theta)^\kappa\theta}$. For $\zeta > - \log (v_{i} / \theta)$ we define the functions 
\begin{equation*}
\textstyle r_{i}(\zeta) = \log \ee_{\Bpl_{i-1}=0} [e^{\zeta \Bpl_{i}}] = \log \Bigl(\frac{(v_{i} - \theta) (e^\zeta + \kappa \theta)}{(1+\theta)^\kappa (v_{i} e^{\zeta}-\theta) e^\zeta} \Bigr)
\end{equation*}
and $r_{k, \ell}(\zeta) = \sum_{i = k}^\ell r_i(\zeta)$. 
For $\Bpl_{0}=z \leq y_1$ the process $(e^{\zeta \Bpl_{m} - r_{1, m}(\zeta)})_{m \geq 0}$ is a martingale, where we postulate $r_{1, 0}(\zeta) = 0$.
Applying the optional stopping theorem, we get $\ee_{\Bpl_{0}=z} [e^{\zeta \Bpl_{\taupl}-r_{1, \taupl}(\zeta)}] = e^{\zeta z}$ for $\zeta > \max_i\{- \log (v_{i} / \theta)\}$, where the stopping time $\taupl$ is defined in \eqref{eq:deftau-ul}. 
The definition of the initial state $\vec y$ yields $\Bpl_{\taupl} = y_{\taupl + 1} + 1 = 2 (M - \taupl) - 1$, and hence $\ee_{\Bpl_{0}=z} [e^{-2 \zeta \taupl - r_{1, \taupl}(\zeta)}] = e^{\zeta (z - 2 M + 1)}$ for $\zeta > \max_i\{- \log (v_{i} / \theta)\}$. 
Introducing a new variable $u = e^{-\zeta}$, the preceding identity may be written as $\ee_{\Bpl_{0}=z} [u^{\taupl} \prod_{i = 1}^{\taupl} \frac{(1+\theta)^\kappa (v_{i} - \theta u)}{(v_{i} - \theta)(1 + \kappa \theta u)} ] = u^{2 M -z - 1}$.
Defining the functions $p(u) = \frac{(1+\theta)^\kappa u (\alpha - \theta u)}{(\alpha - \theta)(1 + \kappa\theta u)}$ and $q(u) = \frac{(1+\theta)^\kappa u (\beta - \theta u)}{(\beta - \theta)(1 + \kappa \theta u)}$, in the two-speed case \eqref{eq:TwoSpeeds_speeds-general} the preceding identity is equivalent to
\begin{align*} 
&\textstyle \ee_{\Bpl_{0}=z} \bigl[p(u)^{\taupl}\uno{\taupl < M} \bigr] + \ee_{\Bpl_{0}=z} \bigl[ p(u)^M q(u)^{\taupl - M} \uno{\taupl \geq M} \bigr] = u^{2 M -z - 1}.
\end{align*}
This formula can be analytically extended to all non-zero $u \in \cc$ in a neighborhood of the origin. From this identity we get 
\begin{align*}
&\ee_{\Bpl_{0}=z} \bigl[q(u)^{\taupl} \uno{\taupl \geq M} \bigr] = \bigl( \tfrac{q(u)}{p(u)} \bigr)^{M} \Bigl(u^{2 M - z - 1} - \ee_{\Bpl_{0}=z} \bigl[p(u)^\taupl \uno{\taupl < M} \bigr]\Bigr).
\end{align*}

The functions $p(u)$ and $q(u)$ are one-to-one in a neighborhood of $u = 0$, so 
\begin{align}
\label{eq:P1} \pp_{\Bpl_{0}=z} (\taupl = k) &= \textstyle \frac{1}{k!} \frac{\d^k}{\d p(u)^k} u^{2 M -z - 1} \Big|_{u = 0}, \\
\label{eq:P2} \pp_{\Bpl_{0}=z} (\taupl = \ell) &= \textstyle \frac{1}{\ell!} \frac{\d^{\ell}}{\d q(u)^{\ell}} \bigl( \tfrac{q(u)}{p(u)} \bigr)^{M} \Bigl(u^{2 M - z - 1} - \ee_{\Bpl_{0}=z} \bigl[p(u)^\taupl \uno{\taupl < M} \bigr] \Bigr) \Big|_{u = 0}.
\end{align}
From this and Cauchy's integral formula we get $\pp_{\Bpl_{0}=z}(\taupl = k) = \frac{1}{2\pi\I} \oint_{\Gamma_0} \d u \frac{p'(u)}{p(u)^{k+1}} u^{2 M -z - 1}$, so for $k < M$ and $z \leq y_1$ we have
\begin{align}
\pp_{\Bpl_{0}=z}(\taupl = k) &= \textstyle \Bigl(\frac{\alpha - \theta}{(1 + \theta)^\kappa}\Bigr)^{k} \frac{1}{2\pi\I} \oint_{\Gamma_0} \d u \frac{(1 + \kappa \theta u)^{k-1} u^{2M - z - 1}}{[u(\alpha - \theta u)]^{k+1}} ( \alpha - 2 \theta u - \kappa \theta^2 u^2 ) \nonumber \\
&= \textstyle \theta^{-2M + z + k + 1} \Bigl(\frac{\alpha - \theta}{(1 + \theta)^\kappa}\Bigr)^{k} \frac{1}{2\pi\I} \oint_{\Gamma_0} \d u \frac{(1 + \kappa u)^{k-1} u^{2M - z - 1}}{[u(\alpha - u)]^{k+1}} ( \alpha - 2 u - \kappa u^2 ),\qquad\label{eq:prob1}
\end{align}
where in the last identity we rescaled $u$ by $\theta^{-1}$. 
Similarly, for $k \geq M$ and $z \leq y_1$ we have
\begin{align*}
&\textstyle \pp_{\Bpl_{0}=z} (\taupl = k) = \frac{1}{2\pi\I} \oint_{\Gamma_0} \d u \frac{q'(u)}{q(u)^{k - M + 1}} u^{2 M -z - 1} p(u)^{-M}  \\
&\textstyle \hspace{2in} - \sum_{\ell = 0}^{M-1} \pp_{\Bpl_{0}=z} (\taupl = \ell) \frac{1}{2\pi\I} \oint_{\Gamma_0} \d u \frac{q'(u)}{q(u)^{k - M + 1}} p(u)^{\ell-M} \\
&\qquad\textstyle = \left(\frac{\alpha - \theta}{(1 + \theta)^\kappa}\right)^{M} \Bigl( \frac{\beta - \theta}{(1 + \theta)^\kappa} \Bigr)^{k-M} \frac{1}{2\pi\I} \oint_{\Gamma_0} \d u \frac{(1 + \kappa \theta u)^{k-1} u^{2M - z - 1}}{[u(\alpha - \theta u)]^{M} [u(\beta - \theta u)]^{k - M +1}} ( \beta - 2 \theta u - \kappa \theta^2 u^2 ) \\
&\textstyle\qquad\qquad\qquad - \sum_{\ell = 0}^{M-1} \pp_{\Bpl_{0}=z} (\taupl = \ell) \left(\frac{\alpha - \theta}{(1 + \theta)^\kappa}\right)^{M-\ell} \Bigl( \frac{\beta - \theta}{(1 + \theta)^\kappa} \Bigr)^{k-M} \\
&\textstyle\hspace{5cm} \times \frac{1}{2\pi\I} \oint_{\Gamma_0} \d u \frac{(1 + \kappa \theta u)^{k - \ell-1}}{[u(\alpha - \theta u)]^{M - \ell} [u(\beta - \theta u)]^{k - M +1}} ( \beta - 2 \theta u - \kappa \theta^2 u^2 ).
\end{align*}
Using \eqref{eq:prob1}, the sum in $\ell$ in the last term equals $\sum_{\ell = 0}^{M-1} \frac{1}{2\pi\I} \oint_{\Gamma_0} \d v \frac{(1 + \kappa \theta v)^{\ell-1} v^{2M - z - 1}}{[v(\alpha - \theta v)]^{\ell+1}} ( \alpha - 2 \theta v - \kappa \theta^2 v^2 ) \frac{1}{2\pi\I} \oint_{\Gamma_0} \d u \frac{(1 + \kappa \theta u)^{k - \ell-1}}{[u(\alpha - \theta u)]^{M - \ell} [u(\beta - \theta u)]^{k - M +1}} ( \beta - 2 \theta u - \kappa \theta^2 u^2 )$.
Since we are taking $z\leq y_1$, the integral with respect to $v$ vanishes for $\ell < 0$, and thus the sum in the last term can be extended to all $\ell < M$. 
Choosing the integration contour for $v$ so that $|\frac{v(\alpha - \theta v)}{1 + \kappa \theta v}| < |\frac{u(\alpha - \theta u)}{1 + \kappa \theta u}|$, the sum can be computed explicitly and yields
$ \frac{1}{(2\pi\I)^2} \oint_{\Gamma_0} \d u \oint_{\Gamma_{0}} \d v\, \frac{(1 + \kappa \theta v)^{M-1} v^{2M - z - 1}}{[v(\alpha - \theta v)]^{M}} \frac{(1 + \kappa \theta u)^{k - M}}{[u(\beta - \theta u)]^{k - M +1}} \frac{( \alpha - 2 \theta v - \kappa \theta^2 v^2 ) ( \beta - 2 \theta u - \kappa \theta^2 u^2)}{(u-v) (\alpha - \theta u - \theta v - \kappa \theta^2 uv)}$.
Enlarging the $v$ contour so that it now encloses $u$ we pick up a residue from the simple pole at that point, which cancels exactly the first integral on the right hand side above.
Thus we obtain for $k \geq M$ and $z \leq y_1$, and after 
rescaling the integration variables by $\theta^{-1}$,
\begin{align}\label{eq:prob2}
 \pp_{\Bpl_{0}=z} (\taupl = k) &= \textstyle \theta^{-2M + z + k + 1} \left(\frac{\alpha - \theta}{(1 + \theta)^\kappa}\right)^{M} \Bigl( \frac{\beta - \theta}{(1 + \theta)^\kappa} \Bigr)^{k-M}\\
&\textstyle\hspace{-1cm} \times \frac{1}{(2\pi\I)^2} \oint_{\Gamma_0} \d u \oint_{\Gamma_{0,u}} \d v\, \frac{(1 + \kappa v)^{M-1} v^{2M - z - 1}}{[v(\alpha - v)]^{M}} \frac{(1 + \kappa u)^{k - M}}{[u(\beta - u)]^{k - M +1}} \frac{( \alpha - 2 v - \kappa v^2 ) ( \beta - 2 u - \kappa u^2)}{(v - u) (\alpha - u - v - \kappa uv)}.
\end{align}

We can now use the formulas \eqref{eq:prob1} and \eqref{eq:prob2} for the hitting probabilities to compute the two sums in \eqref{eq:K-periodic-1}. 
We start with the first sum. If $\taupl = k$, then the definition of $\vec y$ implies $\Bpl_\taupl = 2(M - k) - 1$ and then for $k < M$,
\eqref{eq:SN-two-speeds} and \eqref{eq:prob1} imply that $\ee_{\Bpl_{0}=z}\bigl[\SN_{(\taupl, n]}(\Bpl_\taupl,x')\uno{\taupl<M}\bigr]$ equals
\[\textstyle-\sum_{k = 0}^{M-1} \bigl(\prod_{i = 1}^{n-1}\alphapl_i \bigr) \frac{\theta^{z -x'}}{(2\pi\I)^2} \oint_{\Gamma_0} \d u \oint_{\Gamma_{\alpha, \beta}}\d w\, \frac{u^{2M - z - 1}}{[\frac{u(\alpha - u)}{1 + \kappa u}]^{k+1}} \frac{\alpha - 2 u - \kappa u^2}{(1 + \kappa u)^{2}} \frac{w^{x' - 2 (M-n)}(1 + \kappa w)^{-1} \psi(w)^{-1}}{[\frac{w(\alpha-w)}{1 + \kappa w}]^{M-k} [\frac{w(\beta-w)}{(1 + \kappa w)}]^{n-M}}.\]
The sum can be extended to all $k < M$ (because for $k < 0$ the integrand does not have a pole at $u = 0$), and after choosing the contours so that $|\frac{u(\alpha - u)}{1 + \kappa u}| < |\frac{w(\alpha - w)}{1+ \kappa w}|$ it can be computed to give
\begin{align*}
&\textstyle \ee_{\Bpl_{0}=z}\bigl[\SN_{(\taupl, n]}(\Bpl_\taupl,x')\uno{\taupl<M}\bigr] \\
&\textstyle \quad = \bigl(\prod_{i = 1}^{n-1}\alphapl_i \bigr) \frac{\theta^{z -x'}}{(2\pi\I)^2} \oint_{\Gamma_{\alpha, \beta}}\d w \oint_{\Gamma_0} \d u \, \frac{u^{2M - z - 1}}{[\frac{u(\alpha - u)}{1 + \kappa u}]^{M}} \frac{w^{x' - 2 (M-n)}}{[\frac{w(\beta-w)}{1+ \kappa w}]^{n-M}} \frac{\alpha - 2 u - \kappa u^2}{(u - w) (\alpha - u - w - \kappa u w)} (1 + \kappa u)^{-1} \psi(w)^{-1}.
\end{align*}
Using this and \eqref{eq:SM-two-speeds}, the first kernel in \eqref{eq:K-periodic-1} is given by an explicit sum, which is computed to be
\begin{equation}
\textstyle\frac{\theta^{x -x'}}{(2\pi\I)^3} \oint_{\Gamma_0} \d u \oint_{\Gamma_{\alpha, \beta}}\d w \oint_{\Gamma_{0,u}} \d v\, \frac{[\frac{v(\alpha-v)}{1 + \kappa v}]^M [\frac{v(\beta - v)}{1 + \kappa v}]^{n - M}}{[\frac{u(\alpha - u)}{1 + \kappa u}]^{M} [\frac{w(\beta-w)}{1+ \kappa w}]^{n-M}} \frac{u w^{x' - 2 (M-n)}}{v^{x-2(M - n)+2}} \frac{\alpha - 2 u - \kappa u^2}{(v-u) (u - w) (\alpha - u - w - \kappa u w)} \frac{(1 + \kappa v) \psi(v)}{(1 + \kappa u) \psi(w)}.\label{eq:K21-new}
\end{equation}

The second term in \eqref{eq:K-periodic-1} can be computed similarly.
For $k \geq M$ one uses \eqref{eq:SN-two-speeds} and \eqref{eq:prob2}, and then extends the resulting finite sum into an infinite one and computes the resulting geometric series to get a formula for $\ee_{\Bpl_{0}=z}\bigl[\SN_{(\taupl, n]}(\Bpl_\taupl,x')\uno{M \leq \taupl <n}\bigr]$.
Then one uses this formula together with \eqref{eq:SM-two-speeds} to compute the sum in $z$ defining that term, which yields a quadruple contour integral which, in turn, after evaluation of the residue of a simple pole yields a triple contour integral,
\[\textstyle\frac{\theta^{x-x'}}{(2\pi\I)^3}\oint_{\Gamma_0}\!\d \eta \oint_{\Gamma_0}\! \d v \oint_{\Gamma_0}\! \d u\,\frac{[\frac{\eta(\alpha-\eta)}{1 + \kappa \eta}]^M [\frac{\eta(\beta - \eta)}{1 + \kappa \eta}]^{n - M}}{[\frac{v(\alpha - v)}{1 + \kappa v}]^{M} [\frac{u(\beta - u)}{1 + \kappa u}]^{n - M}} \frac{v [\frac{\beta-u}{1+\kappa u}]^{x' - 2 (M-n)}}{\eta^{x- 2(M -n) +2}} \frac{( \alpha - 2 v - \kappa v^2 )(1+\kappa u)(1 + \kappa \eta) \psi(\eta)}{(v - u) (\eta - v) (\alpha - u - v - \kappa uv)(1 + \kappa v) \psi(\frac{\beta-u}{1+\kappa u})}.\]
We omit the details of this computation.
The end result, after the change of variables $u\longmapsto\frac{\beta-u}{1+\kappa u}$, is that the second term in \eqref{eq:K-periodic-1} equals
\begin{align}
&\textstyle (1+\kappa \beta)^2 \frac{\theta^{x-x'}}{(2\pi\I)^3}\oint_{\Gamma_0}\d \eta \oint_{\Gamma_0} \d v \oint_{\Gamma_\beta} \d u\,\frac{[\frac{\eta(\alpha-\eta)}{1 + \kappa \eta}]^M [\frac{\eta(\beta - \eta)}{1 + \kappa \eta}]^{n - M}}{[\frac{v(\alpha - v)}{1 + \kappa v}]^{M} [\frac{u(\beta - u)}{1 + \kappa u}]^{n - M}} \frac{v u^{x' - 2 (M-n)}}{\eta^{x- 2(M -n) +2}} \\
& \textstyle \hspace{4cm} \times \frac{( \alpha - 2 v - \kappa v^2 )}{(v-\eta) (\beta - u- v - \kappa u v) (\beta - \alpha - \alpha \kappa u + \beta \kappa v - u + v)} \frac{(1 + \kappa \eta) \psi(\eta)}{(1+\kappa u)(1 + \kappa v) \psi(u)}. \label{eq:K22-new}
\end{align}

We have computed all parts of the kernel \eqref{eq:Kn-RW-comp1} in \eqref{eq:K1-new}, \eqref{eq:K21-new} and \eqref{eq:K22-new}.
The final result is
\begin{align}
&\textstyle K^{(n)}(x, x') = \frac{\theta^{x-x'}}{(2\pi\I)^2} \oint_{\Gamma_0}\d u \oint_{\Gamma_{\alpha, \beta}}\d w\, \frac{(\alpha-u)^M (\beta - u)^{n - M} (1 + \kappa w)^{n}}{(\alpha - w)^{M} (\beta - w)^{n-M} (1 + \kappa u)^n} \frac{w^{x'-2 M + n+1}}{ u^{x-2M + n+2}} \frac{1}{u-w} \frac{(1 + \kappa u) \psi(u)}{(1 + \kappa w) \psi(w)} \\
& \textstyle \hspace{1cm} + \frac{\theta^{x -x'}}{(2\pi\I)^3} \oint_{\Gamma_0} \d v \oint_{\Gamma_{\alpha, \beta}}\d w \oint_{\Gamma_0} \d u\, \frac{(\alpha-v)^M (\beta - v)^{n - M} (1 + \kappa u)^M (1 + \kappa w)^{n-M}}{(\alpha - u)^{M} (\beta - w)^{n-M} (1 + \kappa v)^n} \frac{u^{1-M} w^{x' - M + n}}{v^{x-2 M + n+2}} \\
&\textstyle \hspace{7cm} \times \frac{\alpha - 2 u - \kappa u^2}{(v-u) (u - w) (\alpha - u - w - \kappa u w)} \frac{(1 + \kappa v) \psi(v)}{(1 + \kappa u) \psi(w)} \\
&\textstyle \hspace{1cm} + (1+\kappa \beta)^2 \frac{\theta^{x-x'}}{(2\pi\I)^3}\oint_{\Gamma_0}\d v \oint_{\Gamma_0} \d u \oint_{\Gamma_\beta} \d w\,\frac{(\alpha-v)^M (\beta - v)^{n - M} (1 + \kappa u)^M (1 + \kappa w)^{n-M}}{(\alpha - u)^{M} (\beta - w)^{n-M} (1 + \kappa v)^n} \frac{u^{1-M} w^{x' - M + n}}{v^{x-2 M + n+2}} \\
& \textstyle \hspace{4cm} \times \frac{\alpha - 2 u - \kappa u^2}{(u-v) (\beta - u - w - \kappa u w) (\beta - \alpha + \beta \kappa u - \alpha \kappa w + u - w)} \frac{(1 + \kappa v) \psi(v)}{(1 + \kappa u) (1+\kappa w) \psi(w)} \label{eq:kernel-two-speed}
\end{align}
(where we changed the names of the variables in the last integral).

In the case where a single particle with a different speed is placed to the right of the system, i.e., the case $M=1$, the second term in the above expression vanishes because there is no pole at $u=0$, and the formula simplifies to 
\begin{align}
&\textstyle K^{(n)}(x, x') = \frac{\theta^{x-x'}}{(2\pi\I)^2} \oint_{\Gamma_0}\d u \oint_{\Gamma_{\alpha, \beta}}\d w\, \frac{(\alpha-u) (\beta - u)^{n - 1} (1 + \kappa w)^{n}}{(\alpha - w) (\beta - w)^{n-1} (1 + \kappa u)^n} \frac{w^{x' + n - 1}}{u^{x + n}} \frac{1}{u-w} \frac{(1 + \kappa u) \psi(u)}{(1 + \kappa w) \psi(w)} \\
&\textstyle \hspace{1cm} + (1+\kappa \beta)^2 \frac{\theta^{x-x'}}{(2\pi\I)^3}\oint_{\Gamma_0}\d v \oint_{\Gamma_0} \d u \oint_{\Gamma_\beta} \d w\,\frac{(\alpha-v) (\beta - v)^{n - 1} (1 + \kappa u) (1 + \kappa w)^{n-1}}{(\alpha - u) (\beta - w)^{n-1} (1 + \kappa v)^n} \frac{w^{x' + n - 1}}{v^{x + n}} \\
& \textstyle \hspace{4cm} \times \frac{\alpha - 2 u - \kappa u^2}{(u-v) (\beta - u - w - \kappa u w) (\beta - \alpha + \beta \kappa u - \alpha \kappa w + u - w)} \frac{(1 + \kappa v) \psi(v)}{(1 + \kappa u) (1+\kappa w) \psi(w)}. \label{eq:kernel-two-speed-M1}
\end{align}

\subsubsection{The continuous time case}

For two-speed continuous time TASEP, corresponding to the choice $\psi(w) = e^{t w}$ and $\kappa=0$, it was shown in \cite[Prop.~6]{TwoSpeed} that for $n \geq M+1$ and in the case $\beta=1$, one has $\pp(X_t(n)>a)=\det\bigl(\Id- \bP_{a}  \K_{\BFS}^{(n)}\bP_{a}  \bigr)_{\ell^2(\zz)}$ with the one point kernel $\K_{\BFS}^{(n)}$ given as 
\begin{align}
 K_{\BFS}^{(n)}(x,x') &= \textstyle \frac{1}{(2\pi\I)^2}\oint_{\Gamma_1} \d v \oint_{\Gamma_{0, 1-v}} \d w\; \frac{(w-1)^{n-M} v^{x'+ n- M}}{(v-1)^{n-M} w^{x+ n - M + 1}} \frac{(2v-1) e^{t (w - v)}}{(w-v)(w+v - 1)} \\
 &\qquad + \textstyle \frac{1}{(2\pi\I)^3}\oint_{\Gamma_{\alpha}} \d v \oint_{\Gamma_{1,v}} \d z \oint_{\Gamma_{0, \alpha - v}} \d w\; \frac{(w-1)^{n-M} (w-\alpha)^M}{(z-1)^{n-M} (v-\alpha)^M} \frac{z^{x'+ n - M}}{v^M w^{x+ n- 2M + 1}} \nonumber\\
 &\textstyle \hspace{8cm} \times \frac{(2z-1)(2v -\alpha) e^{t (w - z)}}{(z-v)(z+v-1)(w-v)(w + v -\alpha)} \nonumber 
\end{align}
(here we have shifted the variables $v$ and $z$ by $1$ compared to their formulas).
This formula was derived in that paper slightly differently, by finding explicitly the biorthogonal functions $\Phi^n_k$ and computing $K_{\BFS}^{(n)}(x,x')=\sum_{k=0}^{n-1}\Psi^n_k(x)\Phi^n_k(x')$.

We are going to show now how the formula for the kernel \eqref{eq:kernel-two-speed} which we derived in the preceding subsection can be written in the same way up to conjugation by $\theta^{x}$ (i.e., we will show that $K^{(n)}(x,x') = \theta^{x - x'} K_{\BFS}^{(n)}(x,x')$). 
For brevity, since in the general case the computations are more involved, we will only demonstrate this in the case $M=1$.
In this case computing the residue at the simple pole $z = v$ and changing the order of integration, the preceding formula turns into 
\begin{align}
 &K_{\BFS}^{(n)}(x,x') = \textstyle \frac{1}{(2\pi\I)^2} \oint_{\Gamma_{0}} \d w \oint_{\Gamma_1} \d v\; \frac{(w-1)^{n-1} v^{x'+ n - 1}}{(v-1)^{n-1} w^{x+ n}} \frac{(2v-1) e^{t (w - v)}}{(w-v)(w+v - 1)} \\
 &\hspace{2.2cm} + \textstyle \frac{1}{(2\pi\I)^2} \oint_{\Gamma_{0}} \d w \oint_{\Gamma_{\alpha}} \d v\; \frac{(w-1)^{n-1} (w-\alpha)}{(v-1)^{n-1} (v-\alpha)} \frac{v^{x'+n - 2}}{w^{x+n- 1}} \frac{(2v -\alpha) e^{t (w - v)}}{(w-v)(w + v -\alpha)} \nonumber \\
 &\hspace{2.2cm} + \textstyle \frac{1}{(2\pi\I)^3} \oint_{\Gamma_{0}} \d w \oint_{\Gamma_{1}} \d z \oint_{\Gamma_{\alpha}} \d v\; \frac{(w-1)^{n-1} (w-\alpha)}{(z-1)^{n-1} (v-\alpha)} \frac{z^{x' + n - 1}}{v w^{x+ n - 1}} \frac{(2z-1)(2v -\alpha) e^{t (w - z)}}{(z-v)(z+v-1)(w-v)(w + v -\alpha)}. \nonumber
\end{align}
Computing the residues at the simple poles $v = \alpha$, changing the variable $z$ to $v$ and summing the two double integrals we get
\begin{multline}\label{eq:TwoSpeedKernel}
 K_{\BFS}^{(n)}(x,x') = \textstyle \frac{1}{2\pi\I} \oint_{\Gamma_{0}} \d w\, \frac{(w-1)^{n-1}}{(\alpha-1)^{n-1}} \frac{\alpha^{x'+n - 1}}{w^{x+n}} e^{t (w - \alpha)} \\
  + \textstyle \frac{1}{(2\pi\I)^2} \oint_{\Gamma_{0}} \d w \oint_{\Gamma_{1}} \d v\; \frac{(w-1)^{n-1}}{(v-1)^{n-1}} \frac{v^{x'+n - 1}}{w^{x+ n}} \frac{(2v-1) (w - \alpha) (w + \alpha - 1) e^{t (w - v)}}{(v-\alpha)(v+\alpha-1) (w-v)(w+v - 1)}.
\end{multline}

Next consider our formula \eqref{eq:kernel-two-speed-M1} in this setting,
\begin{align}
&\textstyle K^{(n)}(x, x') = \frac{\theta^{x-x'}}{(2\pi\I)^2} \oint_{\Gamma_0}\d u \oint_{\Gamma_{\alpha, 1}}\d w\, \frac{(1 - u)^{n - 1}}{(1 - w)^{n-1}} \frac{w^{x' + n -1}}{ u^{x + n}} \frac{(u-\alpha) e^{t(u - w)}}{(w-\alpha) (u-w)} \\
&\textstyle \qquad + \frac{\theta^{x-x'}}{(2\pi\I)^3}\oint_{\Gamma_0}\d v \oint_{\Gamma_0} \d u \oint_{\Gamma_1} \d w\,\frac{(1 - v)^{n - 1}}{(1 - w)^{n - 1}} \frac{w^{x' +n - 1}}{v^{x + n}} \frac{(v-\alpha) (2 u-\alpha) e^{t(v - w)}}{(u-\alpha) (u - w + 1 - \alpha) (u-v) ( u + w-1)}.
\end{align}
In the first integral we evaluate the residue at the simple pole $w = \alpha$.
In the second term we swap the integrals with respect to $u$ and $w$ and then evaluate the residue at the simple pole $u = 1-w$. 
This yields a single contour integral plus two double integrals, and after adding those two we get
\begin{multline}
K^{(n)}(x, x') = \textstyle \frac{\theta^{x-x'}}{2\pi\I} \oint_{\Gamma_0}\d u\, \frac{(1 - u)^{n - 1}}{(1 - \alpha)^{n-1}} \frac{\alpha^{x' + n -1}}{ u^{x + n}} e^{t(u - \alpha)} \\
\textstyle \qquad + \frac{\theta^{x-x'}}{(2\pi\I)^2}\oint_{\Gamma_0}\d v \oint_{\Gamma_1} \d w\,\frac{(1 - v)^{n - 1}}{(1 - w)^{n - 1}} \frac{w^{x' + n - 1}}{v^{x + n}} \frac{(2 w-1) (v-\alpha) (v + \alpha - 1) e^{t(v - w)}}{(w-\alpha) (w + \alpha - 1) (v-w) (w+v-1)}.
\end{multline}
Up to conjugation by $\theta^x$, and changing the names of the variables, this is exactly \eqref{eq:TwoSpeedKernel}.

\subsection{\for{toc}{Mixed sequential and parallel TASEP with general density and KPZ fixed point limit}\except{toc}{Mixed sequential and parallel TASEP with general particle density and KPZ fixed point limit}}\label{sec:caterpillars-mix}

Next we consider a situation where all speeds are equal but the $\kappa_i$'s are not homogeneous.
For concreteness, we focus on the case of discrete time TASEP with right Bernoulli jumps with parameter $p\in(0,1)$ and consider a situation where particles are arranged periodically in blocks of length $a\geq0$ and $b\geq0$, with particles in the first type of block updating sequentially and particles in the second type of block updating in parallel.
In other words, we are considering the system of caterpillars from Sec.~\ref{sec:caterpillars-rB} with lengths 
\[L_i=\begin{dcases*}1 & if $(i-1)\mod (a+b)\in\{0,\dotsc,a-1\}$\\2 & if $(i-1)\mod (a+b)\in\{a,\dotsc,b-1\}$\end{dcases*}\]
for each $i\geq1$.
In the notation of Sec.~\ref{sec:examples} (see in particular Sec.~\ref{sec:caterpillars-rB}) this means we need to take $\varphi(w)=1+w$, $v_i = p/q$ and $L_i$ as above, and then the multipoint distribution of this particle system is given by \eqref{eq:caterpillars} and \eqref{eq:caterpillars-Kt}.
Taking $a=1$ and $b=0$ we recover right Bernoulli TASEP with sequential update, while taking $a=0$ and $b=1$ we recover right Bernoulli TASEP with parallel update.

\subsubsection{KPZ universality}

Both sequential and parallel TASEP are known to converge, after proper rescaling, to the KPZ fixed point,  the conjectured universal scaling limit of all models in the KPZ universality class.
The KPZ fixed point was first constructed in \cite{fixedpt} as the scaling limit of continuous time TASEP, and is by now known to be the scaling limit of several TASEP-like particle systems (see \cite{arai,nqr-RBM,TASEPgeneral}, although in several cases the necessary asymptotics have not been performed in full detail), last passage percolation models (in this case using a different method, see \cite{DOV,DV}), as well as the semi-discrete polymer and the KPZ equation \cite{virag}, and ASEP and the six-vertex model \cite{aggarwalCorwinHegde2}.

To be more precise about the convergence of sequential and parallel TASEP to the KPZ fixed point, let $\UC$ be the space of upper semi-continuous functions $\fh\!:\rr\longrightarrow\rr\cup\{-\infty\}$ satisfying $\fh(x)\leq c_1|x|+c_2$ for some $c_1,c_2>0$ and $\fh\not\equiv-\infty$, endowed with the local Hausdorff topology (see \cite[Sec.~3]{fixedpt} for more details) and fix a parameter $\rho\in(0,1)$ which plays the role of the average particle density.
Then one expects that there are explicit constants $\alpha,\beta,\gamma,\sigma>0$ (which are not universal, in particular they differ between the sequential and parallel cases, see the end of this subsection for their explicit values) so that following holds: if the TASEP initial data $(X^\ep_0(i))_{i\geq1}$ is such that for some $\fh_0\in\UC$ satisfying $\fh_0(\fx)=-\infty$ for $\fx>0$, 
\begin{equation}\label{eq:TASEPini}
-\gamma^{-1}\ep^{1/2}\Big(X^\ep_0\big(\ep^{-1}\fx\big)+\rho^{-1}\ep^{-1}\fx\Big)\xrightarrow[\ep\to0]{}\fh_0(-\fx)
\end{equation}
in $\UC$, then one has
\begin{equation}
-\gamma^{-1}\sigma^{-1}\ep^{1/2}\Big(X_{\ep^{-3/2}\ft}\big(\alpha\ep^{-3/2}\ft-\sigma^2\ep^{-1}\fx\big)-\beta\ep^{-3/2}\ft-\rho^{-1}\sigma^2\ep^{-1}\fx\Big)\longrightarrow\fh(\ft,\fx;\fh_0)\label{eq:fxpt-conv}
\end{equation}
as a process in $\ft>0$ and $\fx\in\rr$, in distribution in $\UC$;
the limiting process $\fh(\ft,\fx)$ is the KPZ fixed point, which is a $\UC$-valued Markov process with initial data $\fh_0$ (which is indicated in the notation $\fh(\ft,\fx;\fh_0)$). 
The KPZ fixed point has explicit transition probabilities, which we will introduce shortly.
A full proof of \eqref{eq:fxpt-conv} involves some heavy asymptotic analysis, and has actually not appeared in detail in the literature for these models (restricting to convergence of finite dimensional distributions, in the sequential case, \cite{bfp} proved it for fixed time marginals and periodic initial data, and \cite{arai} gave a proof at the level of critical point computations for the general result; while in the parallel case a proof for fixed time marginals and periodic initial data appears in \cite{borodFerSas}), but the result can be established by starting from the Fredholm determinant formulas derived in \cite{TASEPgeneral} and suitably adapting the arguments given in \cite{fixedpt} for continuous time TASEP.

By KPZ universality, one expects that \eqref{eq:fxpt-conv} should also hold for the mixed sequential/parallel version of TASEP (for different choices of $\alpha,\beta,\gamma,\sigma$).
In this section we will work out the correct scaling and provide a sketch of the proof of the convergence for fixed $\ft>0$, although for brevity, we restrict ourselves to establishing pointwise convergence of the kernels appearing inside the Fredholm determinants.
A full proof of convergence to the KPZ fixed point would require upgrading this to convergence of the kernel in trace norm, which can be achieved by adapting the argument in \cite[Appdx.~B]{fixedpt} to the scaling analysis we will present here (although the extension in principle requires a considerable amount of work).





It is worth stressing that this analysis could also be performed, in an analogous way, for mixed sequential/parallel versions of other TASEP variants, such as those described in Sec.~2 of \cite{TASEPgeneral}, as well as for more general mixtures of caterpillars with different lengths.

Additionally, we note that in the setting of \eqref{eq:fxpt-conv} we are working with a general choice of particle density $\rho\in(0,1)$.
In most works (including \cite{fixedpt}) the density is chosen to be $1/2$ for simplicity, but the same limit is expected to hold for any density $\rho$ under the right scaling.
Since we will already be giving a detailed derivation of the scaling and a careful verification of the asymptotics at the level of pointwise convergence of the kernels, and since the extension to general $\rho$ for models such as sequential and parallel TASEP is important but mostly missing from the literature (a very recent exception is \cite{ferrariNejjar}, where the authors provide an argument for continuous time TASEP with periodic initial condition at a general density), we will provide here the arguments for the scaling and asymptotics at this level of generality.

\subsubsection{KPZ fixed point}\label{eq:kpzfixedpt}

Before getting started with the derivation, let us introduce the explicit formula for the KPZ fixed point transition probabilities.
We restrict the discussion to one-sided initial data $\fh_0$, which are such that $\fh_0(x)=-\infty$ for all $x>0$ (which is the class arising naturally from the class of TASEP initial conditions being considered in this paper where there is a rightmost particle); for the extension to two-sided data see \cite{fixedpt}.
Introduce the kernels
\begin{equation}\label{eq:fT}
\fT_{\ft,\fx}(u,v)=\ft^{-1/3}e^{\frac{2\fx^3}{3\ft^2}-\frac{(u-v)\fx}{\ft}}\Ai(\ft^{-1/3}(v-u)+\ft^{-4/3}\fx^2)
\end{equation}
for $\ft\neq0$, where $\Ai$ is the Airy function.
Note that
\begin{equation}
\fT_{-\ft,\fx}=\fT_{\ft,\fx}^*.\label{eq:fTadjoint}
\end{equation}
For $\ft>0$ one has the contour integral representation 
\begin{equation}
\fT_{\ft,\fx}(u,v)=\frac1{2\pi\I}\int_{\langle}\d z\,e^{\frac{\ft}3z^3+\fx z^2+(u-v)z}\label{eq:FT2}
\end{equation}
with $\langle$ the (positively oriented) contour made of two rays departing the origin at angles $\pm\pi/3$.
Then for any $\ft>0$ and any $\fx_1,\dotsc,\fx_m$ one has \cite{fixedpt}
\begin{equation}\label{eq:fixedptdet}
\pp\big(\fh(\ft,\fx_i)\leq\fa_i,\,i\in\set{m}\big)
=\det\!\left(\fI-\P_{\fa}\fK^{\hypo(\fh_0)}_{\ft,\uptext{ext}}\P_{\fa}\right)_{L^2(\{\fx_1,\dotsc,\fx_m\}\times\rr)}
\end{equation}
where (here $e^{\fy\p^2}(u,v)$, $\fy>0$, is the heat kernel, corresponding to the transition density of a Brownian motion with diffusivity $2$)
\begin{equation}
\fK^{\hypo(\fh_0)}_{\ft,\uptext{ext}}(\fx_i,\cdot;\fx_j,\cdot)=-e^{(\fx_j-\fx_i)\partial^2}\uno{\fx_i<\fx_j}+(\fT^{\hypo(\fh_0^-)}_{\ft,-\fx_i})^*\fT_{\ft,\fx_j}\label{eq:fixedptker}
\end{equation}
with $\fh_0^-(\fy)=\fh_0(-\fy)$ and
\begin{equation}\label{eq:defShypo}
\fT^{\hypo(\fh)}_{\ft,\fx}(v,u)=\ee_{\fB(0)=v}\big[\fT_{\ft,\fx-\ftau}(\fB(\ftau),u)\uno{\ftau<\infty}\big],
\end{equation}
and where $\fB$ is a Brownian motion with diffusivity $2$ and $\ftau$ is the hitting time of $\fB$ to the epigraph of $\fh$.

\subsubsection{Scaling}\label{sec:scaling-mixed}

In order to derive the desired convergence \eqref{eq:fxpt-conv} we need to fix $\ft>0$ and $\fx,\fa\in\rr^m$ and study the quantity 
\begin{multline}\label{eq:prob-det-ep}
\pp\Big(X_{\ep^{-3/2}\ft}(\alpha\tts\ep^{-3/2}\ft-\sigma^2\ep^{-1}\fx_i)>\tts\beta\tts\ep^{-3/2}\ft+\rho^{-1}\sigma^2\ep^{-1}\fx_i-\gamma\sigma\ep^{-1/2}\fa_i,\,i\in\set{m}\Big)\\
=\det(I-\bar\P_{r}K_t\bar\P_{r})_{\ell^2(\{n_1,\dotsc,n_m\}\times\zz)},
\end{multline}
where the kernel $\K_t$ on the right hand side is the one given in \eqref{eq:caterpillars-Kt} with the choices specified above and with
\begin{equation}\label{eq:tn-scaling}
t=\ep^{-3/2}\ft,\quad n_i=\alpha\tts\ep^{-3/2}\ft-\sigma^2\ep^{-1}\fx_i, \quad r_i= \beta\tts\ep^{-3/2}\ft+\rho^{-1}\sigma^2\ep^{-1}\fx_i-\gamma\sigma\ep^{-1/2}\fa_i
\end{equation}
for some parameters $\alpha$, $\beta$, $\gamma$, $\rho$ and $\sigma$ which we will specify shortly.

We introduce the following change of variables in the kernel $K_t(n_i,x_i;n_j,x_j)$:
\begin{equation}\label{eq:xi-scaling}
x_i=\beta\tts\ep^{-3/2}\ft+\rho^{-1}\sigma^2\ep^{-1}\fx_i+\gamma \sigma\ep^{-1/2}u_i.
\end{equation}
More properly, $x_i$, $n_i$, and $r_i$ should be taken to be the integer part of the right hand side, but we ignore this here and below.
After the change of variables in the Fredholm determinant we are on a lattice of size $\ep^{1/2}$ and the kernel $K_t$ gets multiplied by $\gamma\sigma\ep^{-1/2}$, while the limiting Fredholm determinant will be computed on $L^2(\{x_1,\dotsc,x_m\}\times\rr)$ and the projection $\bar\P_{r}$ will become $\bar\P_{-\fa}$ .
Our goal then is to compute, under this scaling
\begin{equation}\label{eq:barKlim}
\bar\fK(\fx_i,u_i;\fx_j,u_j)\coloneqq\lim_{\ep\to0}\gamma\sigma\ep^{-1/2}K_t(n_i,x_i;n_j,x_j),
\end{equation}
which will identify the limit of the scaled multipoint distributions of the left hand side of \eqref{eq:fxpt-conv} as $\det(\fI-\bar\P_{-\fa}\bar\fK\bar\P_{-\fa})_{L^2(\{\fx_1,\dotsc,\fx_m\}\times\rr)}$.
Note that we are interpreting the right hand side of \eqref{eq:barKlim} as a kernel acting on $L^2(\{\fx_1,\dotsc,\fx_m\}\times\rr)$

The parameters appearing in the scaling have to be chosen as follows (recall that $q=1-p$):
\begin{equation}\label{eq:paramset}
\begin{gathered}
\alpha = \frac{(p-q\theta)^2}{pq(1+\theta)^2 + \lambda(p-q\theta)^2}, 
\qquad\beta = \frac{p (q (1+\theta)^2 - 1)}{pq(1 + \theta)^2 + \lambda (p-q\theta)^2}\qquad\;\\
\gamma = \bigg(\frac{pq\theta}{2(p-q\theta)^2}+\frac{\theta\lambda}{2(1+\theta)^2}\bigg)^{1/2},\qquad
\sigma = \left(\frac{2 p q \theta (1+\theta) (p - q \theta)}{\gamma (pq(1 + \theta)^2 + \lambda (p-q\theta)^2)^2}\right)^{1/3},
\end{gathered}
\end{equation}
with
\begin{equation}\label{eq:thetaset}
\lambda=\frac{b}{a+b},\qquad
\theta = \frac{p-q-p(1+\lambda)\rho+\sqrt{(p^2 (1-\lambda)^2 + 4p\lambda)\rho^2-2p(1+\lambda)\rho+1}}{2q(1 - \lambda\rho)}.
\end{equation}
This last choice sets the value of the free parameter $\theta$ in the definition of $K_t$, and can be checked to be in $(0,p/q)$ as needed\footnote{Different choices of $\theta$ in that definition would require an additional conjugation on the right hand side of \eqref{eq:barKlim}, which anyway would not change the value of the associated Fredholm determinants; see also \cite[Rem.~5.16(b)]{TASEPgeneral}.} (in fact, one actually has
\begin{equation}\label{eq:theta-bd}
0<\theta<\tfrac{p}{q}(1-\rho),
\end{equation}
which will be useful later on), while $\lambda$ is simply the macroscopic proportion of parallel particles.
We will explain shortly where the key choice of $\theta$, as well as that of $\gamma$, come from; $\alpha$, $\beta$ and $\sigma$ could in principle be derived from KPZ scaling theory by studying the invariant measure of the process \cite{spohn-KPZscaling}, but we will not attempt that here and instead choose these parameters based directly on the asymptotics of our formulas.

\subsubsection{Asymptotics}

With the scaling \eqref{eq:tn-scaling}--\eqref{eq:thetaset} introduced in the previous section, the kernel 
\begin{equation}\label{eq:Ktep-def}
  \fK^\ep_{\ft}(\fx_i,u_i;\fx_j,u_j)=\gamma\sigma\ep^{-1/2}K_t(n_i,x_i;n_j,x_j)
\end{equation} 
appearing on the right hand side of \eqref{eq:barKlim} can be written as
\begin{equation}\label{eq:Ktep}
  \fK^\ep_{\ft}(\fx_i,\cdot;\fx_j,\cdot)=-\fQ^\eps_{\fx_i-\fx_j}\uno{\fx_i>\fx_j}+(\fT^\ep_{-\ft,\fx_i})^*\bar \fT^{\ep,\epi(X_0)}_{-\ft, -\fx_j}
\end{equation}
with, writing $y=\gamma\sigma\ep^{-1/2}\zeta$,
\begin{align}
\fQ^\eps_{\fx_i,\fx_j}(u_i,u_j) &= \gamma\sigma\ep^{-1/2}Q_{(n_i, n_j]}(x_i,x_j),\label{eq:Qep}\\
\fT^\ep_{-\ft,\fx_i}(\zeta,u_i) &= \gamma \sigma \ep^{-1/2} (1+\theta)^{-t}\SM_{-n_i}(y,x_i),\label{eq:S-rescaled}\\
\bar \fT^\ep_{-\ft, -\fx_j}(\zeta,u_j) &= \gamma \sigma \ep^{-1/2} (1+\theta)^{t} \SN_{(0, n_j]}(y,x_j),\label{eq:barS-rescaled}\\
\shortintertext{and}
\bar \fT^{\ep,\epi(X_0)}_{-\ft, -\fx_j}(\zeta,u_j) &= \gamma \sigma \ep^{-1/2} (1+\theta)^{-t}\SN^{\epi(X_0)}_{n_j}(y,x_j)\label{eq:barS-epi-rescaled}
\end{align}
(the factors in front of the kernels have been introduced because they are needed in the asymptotic analysis, while choice of subscript with $-\ft$ instead of $\ft$ is to coordinate notation with \cite{fixedpt}).\noeqref{eq:Ktep-def}

In order to compute the limit of $\fK^\ep_\ft$ we need to study the limits of each of the four kernels.
The third one does not appear explicitly in \eqref{eq:Ktep}, but it is convenient to study it separately, as the limit for the fourth kernel will then follow from that analysis and an analysis of the random walk hitting problem.

In the rest of this section we will provide a preliminary analysis which will allow us to identify the limits of the kernels.
The proof of the pointwise convergence of the kernels to their limit is deferred to the appendix.

For notational simplicity, in what follows we will omit the subscripts in $n_i$, $n_j$, $x_i$, $x_j$, $\fx_i$, $\fx_j$, $u_i$ and $u_j$ by $u'$, and replace them in the obvious way by $n$, $n'$ and so on.

\vskip6pt
\paragraph{\emph{Asymptotics for $\fQ^\ep_{\fx,\fx'}(u,u')$}}
\enspace
We begin by explaining how $Q_{(n,n']}(x,x')$, with $n<n'$ and scaled according to \eqref{eq:tn-scaling}, \eqref{eq:xi-scaling} and \eqref{eq:Qep}, leads to a heat kernel.
The kernel corresponds to the transition matrix of the random walk $B_m$ so, using the scaled variables, $Q_{(n,n']}(x,x')$ is the probability that $B_m-B_0=\rho^{-1}\sigma^2\ep^{-1}(\fx'-\fx)+\gamma\sigma\ep^{-1/2}(u'-u)$ with $m=\sigma^2\ep^{-1}(\fx-\fx')$ (note $n<n'$ implies $\fx>\fx'$).
We want to use the central limit theorem to argue that this converges to a Gaussian density.
For this we need the mean of the random walk to be $-\rho^{-1}$ (this choice in our scaling comes from the choice of average density $\rho$, as implied in \eqref{eq:TASEPini}).
Given $\theta\in(0,1)$, let 
\[\hat\theta=\tfrac{q}{p}\theta.\]
Then the mean of the jump distribution $Q_\ell$ corresponding to sequential particles is $-\frac{1}{1-\hat\theta}$ while the one corresponding to parallel particles is $-\frac{1-p(1-\hat\theta)^2}{(q+p\hat\theta))(1-\hat\theta)}$, as can be computed directly from their definition \eqref{eq:Q} (or \eqref{eq:Q-alt}) with the current choices.
Recalling that $\lambda$ denotes the density of parallel particles, the average mean of the jump distribution of the (inhomogeneous) random walk for the mixed case is
\[\textstyle-(1-\lambda)\frac{1}{1-\hat\theta}-\lambda\frac{1-p(1-\hat\theta)^2}{(q+p\hat\theta)(1-\hat\theta)}=-\frac{1}{1-\hat\theta}-\frac{p\tts\hat\theta\lambda}{q+p\hat\theta}.\]
Hence we need to choose $\hat\theta$ to be the solution of $\frac{1}{1-\hat\theta}+\frac{p\tts\hat\theta\lambda}{q+p\hat\theta}=\rho^{-1}$, which is explicitly given by $\hat\theta=\tfrac{q}{p}\theta$ with $\theta$ as chosen in \eqref{eq:thetaset}.
With this choice of the parameter $\theta$ and the above scaling, the central limit theorem implies that $\fQ^\eps_{\fx,\fx'}(u,u') \xrightarrow[\ep\to0]{}e^{\upsilon(\fx-\fx')/(2\gamma^2)\p^2}(u,u')$,
where $\upsilon$ is the average variance of the random walk jump distribution. 
This variance can be computed similarly, and equals $(1-\lambda)\frac{\hat\theta}{(1-\hat\theta)^2}+\lambda\frac{q\hat\theta+p\hat\theta^3}{(q+p\hat\theta)(1-\hat\theta)^2}=\frac{pq\theta}{(p-q\theta)^2}+\frac{\theta\lambda}{(1+\theta)^2}$.
The above choice of $\gamma$ implies that $\upsilon/\gamma^2=2$ and hence that, pointwise,
\begin{equation}
\label{eq:Qtep-conv}
\fQ^\eps_{\fx,\fx'}(u,u') \xrightarrow[\ep\to0]{}e^{(\fx-\fx')\p^2}\!(u,u').
\end{equation}

\vskip6pt
\paragraph{\emph{Formula and asymptotic expansion for $\fT^\ep_{-\ft,\fx}(\zeta,u)$}}
\enspace Using \eqref{eq:SM} (and recalling also that $v_\ell=p/q$ for all $\ell$ in this case while $\kappa_\ell$ equals $0$ for sequential particles and $1$ parallel ones) we get
\begin{align}
\SM_{-n}(y, x) &\textstyle= \frac{1}{2\pi\I}\oint_{\gamma_\rrin}\d w\,\frac{\theta^{x-y}}{w^{x-y + n + 1}} (1+w)^t \prod_{\ell = 1}^{n} \frac{p/q-w}{(1 + w)^{\kappa_\ell}} \frac{(1 + \theta)^{\kappa_\ell}\theta}{p/q-\theta}\\
&\textstyle=\left(\frac{\theta}{p/q-\theta}\right)^{n} \frac{p/q}{2\pi\I}\oint_{\gamma_{q \rrin / p}}\d w\,\frac{\theta^{x-y} (p/q)^{n} (1-w)^{n}}{(pw/q)^{x-y + n + 1}}(1+pw/q)^t \prod_{\ell = 1}^{n} \left( \frac{1 + \theta}{1 + pw/q}\right)^{\kappa_\ell}\\
&\textstyle=\left(\frac{\hat\theta}{1- \hat\theta}\right)^{n} \frac{1}{2\pi\I}\oint_{\gamma_{\hat r}}\d w\,\frac{\hat\theta^{x - y} (1-w)^{n}}{w^{x - y + n + 1}} (1+pw/q)^t \left( \frac{q + p \hat\theta}{q + pw}\right)^{\kappa(n_i)},\label{eq:Snihats}
\end{align}
with $\hat\theta=q\theta/p$ and $\hat r=qr/p$, and where we write, with $\lambda$ as defined in \eqref{eq:thetaset},
\begin{equation}
\textstyle\kappa(n)=\sum_{\ell=1}^{n}\kappa_\ell=\lambda n+\mathcal{O}(1)\label{eq:kappa_ni}
\end{equation}
(in fact, the $\mathcal{O}(1)$ term is at most $a+b$).
From our assumptions, $r$ here is bounded above by $\theta$, but at this stage we may clearly replace it by any radius in $(0,1)$, which means that we may replace $\hat r$ by any radius $r_0\in(0,q/p)$.
Moreover, note that the integrand in \eqref{eq:Snihats} only has a pole at $w = 0$; there is no singularity at $-q/p$ because $t > \kappa(n)\approx\lambda n$ for sufficiently small $\eps$, as follows from \eqref{eq:tn-scaling} (note that $\alpha\lambda<1$, we will use this often below).

With this we get, after a bit of algebra,
\begin{equation}
\textstyle\fT^\ep_{-\ft,\fx}(\zeta, u) = \gamma\sigma\tts\ep^{-1/2} \frac{1}{2\pi\I}\oint_{\gamma_{\hat r}}\d w\, \frac1{\hat\theta}\tts e^{F_\eps(w)} = \gamma\sigma\ep^{-1/2}\frac{1}{2\pi\I}\oint_{\Gamma}\d v\, e^{F_\eps(\hat\theta(1 - v))},\label{eq:123-a}
\end{equation}
where $\Gamma$ is a circle of radius $\hat r/\hat\theta$ centered at $1$, which we orient negatively (thus absorbing the minus sign from the change of variables) and where
$F_\eps(w) = n \log(\tfrac{\hat\theta}{1-\hat\theta}) + (x-y + 1) \log\hat\theta + n \log (1-w)
- (x-y + n + 1) \log w + (t - \kappa(n)) \log(\tfrac{q + pw}{q + p\hat\theta})$,
which gives
\begin{equation}
F_\eps(\hat\theta(1 - v)) = - (x-y + n + 1) \log(1 - v) + n \log(1 + A v) + (t - \kappa(n)) \log( 1 - B v), \label{eq:F_eps-a}
\end{equation}
with 
\begin{equation}
\textstyle A = \frac{\hat\theta}{1 - \hat\theta}=\frac{q\theta}{p-q\theta}\qqand B = \frac{p \hat\theta}{q+p\hat\theta}=\frac{\theta}{1+\theta}.\label{eq:AB-def-a}
\end{equation}
After the change of variables, the integrand in \eqref{eq:123-a} only has a singularity at $v=1$ inside the contour (there is no singularity at $(q+p\hat\theta)/(p\hat\theta)$ because $t>\lambda n_i$).

Using this and the choice of scaled variables \eqref{eq:tn-scaling} and \eqref{eq:xi-scaling} under the replacement $u_i=u$, and letting $y=\gamma\sigma\ep^{-1/2}\zeta$, we get
\begin{equation}\label{eq:F-eps-expansion-a}
F_\eps(\hat\theta(1 - v)) = \ep^{-3/2}\ft f_3(v) + \ep^{-1}\fx f_2(v) + \ep^{-1/2}(u- \zeta) f_1(v) + f_0(v),
\end{equation}
where
\begin{equation}\label{eq:functions-f-a}
\begin{aligned}
&f_3(v) = - (\alpha + \beta)\log(1 - v) + \alpha \log(1 + A v) + (1 - \alpha \lambda) \log( 1 - B v), \\
&f_2(v) = \sigma^2 \Bigl( - \tfrac{1-\rho}{\rho}\log(1 - v) - \log(1 + A v) + \lambda \log( 1 - B v) \Bigr), \\
&f_1(v) = \gamma \sigma\log(1 - v), \qquad\qquad f_0(v) = -(1+\CO(1))\log(1 - v),
\end{aligned}
\end{equation}
where the $\CO(1)$ term comes from \eqref{eq:kappa_ni} (and only depends on $a$ and $b$).

In order to analyze the exponent in \eqref{eq:123-a} we will use the expansion 
\begin{equation}
  \textstyle \log (1 + c\tts v) = - \sum_{k = 1}^n \frac{1}{k}(-c\tts v)^k + \CO(v^{n+1}),\label{eq:log-expansion}
\end{equation}
which is valid for fixed $c\in\rr$ and integer $n \geq 0$, with an error term which is uniform in $v$ in any compact set contained on the right half-plane.
Then we get
\begin{align}
&f_3(v) = \bigl( \alpha + \beta + \alpha A - (1 - \alpha \lambda) B \bigr) v + \bigl( \alpha + \beta - \alpha A^2 - (1 - \alpha \lambda) B^2 \bigr) v^2 \\
&\hspace{7cm} + \bigl(\alpha + \beta + \alpha A^3 - (1 - \alpha \lambda) B^3 \bigr) v^3 + E_3(v), \\
&f_2(v) = \sigma^2 \bigl( \tfrac{1-\rho}{\rho} - A - \lambda B\bigr) v + \tfrac{1}{2} \sigma^2 \bigl( \tfrac{1-\rho}{\rho} + A^2 - \lambda B^2\bigr) v^2 + E_2(v), \label{eq:f-asymptotics-0-a} \\ 
&f_1(v) = -\gamma\sigma v + E_1(v), \qquad\qquad f_0(v) = E_0(v),
\end{align}
where the $E_i$'s are error terms which satisfy
\begin{equation}
|E_i(v)|\leq c|v|^{i+1}\label{eq:Ei-errors-a}
\end{equation}
for some constant $c>0$ which can be chosen uniformly in $v$ in a fixed compact set.
Our choices of $\alpha$ and $\beta$ are exactly so that 
\begin{equation}\label{eq:ABeqns-a}
\alpha + \beta + \alpha A - (1 - \alpha \lambda) B = \alpha + \beta - \alpha A^2 - (1 - \alpha \lambda) B^2 = \tfrac{1-\rho}{\rho} - A - \lambda B = 0,
\end{equation}
where for the last one we also used that our choice of $\hat\theta$ satisfies $\frac{1}{1-\hat\theta}+\frac{p\hat\theta\lambda}{q+p\hat\theta}=\rho^{-1}$ (which is equivalent to it), as well as 
\begin{equation}\label{eq:moreABeqns}
\tfrac{1-\rho}{\rho} + A^2 - \lambda B^2=2\gamma^2,\qquad
\alpha + \beta + \alpha A^3 - (1 - \alpha \lambda) B^3=(\gamma\sigma)^3,
\end{equation}
which can be checked after some additional computations.
Then the formulas for $f_2$ and $f_3$ simplify to
\begin{equation}\label{eq:f-asymptotics-a}
f_3(v) = \tfrac{(\gamma\sigma)^3}{3}v^3+E_3(v),\qquad
f_2(v) = (\gamma\sigma)^2v^2+E_2(v).
\end{equation}
In particular, this means that $f_3$ has a critical point at the origin, and also (from \eqref{eq:F-eps-expansion-a} and together with \eqref{eq:f-asymptotics-0-a}) that
\begin{equation}
F_\ep(\hat\theta(1-v))\approx\ep^{-3/2}\ft\tfrac{(\gamma\sigma)^3}{3}v^3+\ep^{-1}\fx(\gamma\sigma)^2v^2+\ep^{-1/2}\gamma\sigma (\zeta-u)v.\label{eq:Feps-approx-a}
\end{equation}
After a further change of variables $v\longmapsto(\gamma\sigma)^{-1}\ep^{1/2}v$, this becomes exactly the exponent in the integral defining $\fT_{-\ft,\fx}(\zeta,u)$ in according to \eqref{eq:fTadjoint} and \eqref{eq:FT2}, and suggests that the following limit should hold:

\begin{prop}\label{prop:S-eps-converges}
For any $M>0$ one has
\begin{equation}\label{eq:S-eps-converges}
\fT^\ep_{-\ft,\fx}(\zeta,u) \xrightarrow[\ep\to0]{} \fT_{-\ft,\fx}(\zeta,u)
\end{equation}
uniformly over $|\zeta-u|\leq M$.
\end{prop}

We will prove this result in Appdx. \ref{sec:trcl-estim}.
Note that we have stated the limit in a stronger form than the pointwise convergence which we have been discussing; we do this because the proof is not any harder.

\vskip6pt
\paragraph{\emph{Formula and asymptotic expansion for $\bar\fT^\ep_{-\ft,-\fx}(\zeta,u)$}}
\enspace 
Proceeding similarly for the operator in \eqref{eq:barS-rescaled} we may write, based on \eqref{eq:SN},
\[\textstyle\SN_{(0, n]}(y,x) %
= - \left(\frac{1-\hat\theta}{\hat\theta}\right)^{n_j} \frac{1}{2\pi\I}\oint_{\gamma_{1}} \d w\,\frac{w^{x-y + n - 1}}{\hat\theta^{x-y} (1-w)^{n}}(1+p w /q)^{-t} \left( \frac{q + p w}{q + p \hat\theta}\right)^{\kappa(n)},\]
where $\gamma_1$ is any contour including only the pole at $1$ (and not the one at $-q/p$; this choice of contour coincides with the one in \eqref{eq:SN} after a having changed variables $w\longmapsto pw/q$), so that
\begin{equation}\label{eq:barSetp-exp-a}
\textstyle\bar \fT^\ep_{-\ft, -\fx}(u',u) = -\gamma\sigma\tts\ep^{-1/2} \frac{1}{2\pi\I}\oint_{\gamma_{1}}\d w\, \frac{1}{\hat\theta}\tts e^{\bar F_\eps(w)} = \gamma\sigma\ep^{-1/2}\frac{1}{2\pi\I}\oint_{\Gamma_{1 / \hat\theta - 1}}\d v\, e^{\bar F_\eps(\hat\theta(1 + v))},
\end{equation}
where $\Gamma_{1 / \hat\theta - 1}$ includes only the pole at $1 / \hat\theta - 1 > 0$ (and not the one at $-1-q/(p\theta)$) and is oriented negatively (thus absorbing the minus sign in the middle expression), and where
\begin{equation}
\bar F_\eps(\hat\theta(1 + v)) = (x-y + n + 1) \log(1 + v) - n \log(1 - A v) - (t - \kappa(n)) \log( 1 + B v), 
\end{equation}
Observe that here we changed variables $w\longmapsto\hat\theta(1+v)$ instead of $w\longmapsto\hat\theta(1-v)$ as for $\fT^\ep_{-\ft, \fx}$, which ensures that the contour can be taken to be contained in the right half plane.
But note $\bar F_\eps(\hat\theta(1 + v))$ equals $- F_\eps(\hat\theta(1 - v))$ from \eqref{eq:123-a} but defined via $x_j$ and $n_j$, so from \eqref{eq:F-eps-expansion-a} we get
\begin{equation}\label{eq:bar-Feps-expansion-a}
\bar F_\eps(\hat\theta(1 + v)) = \ep^{-3/2}\ft \bar f_3(v) + \ep^{-1}\fx_j \bar f_2(v) + \ep^{-1/2}(u'- u) \bar f_1(v) + \bar f_0(v)
\end{equation}
with $\bar f_i(v) = - f_i(-v)$.
In particular this means that, as in the previous case, $\bar f_3$ has a critical point at the origin, and also, as in \eqref{eq:Feps-approx-a}, that $\bar F_\ep(\hat\theta(1+v))\approx\ep^{-3/2}\ft\tfrac{(\gamma\sigma)^3}{3}v^3-\ep^{-1}\fx(\gamma\sigma)^2v^2+\ep^{-1/2}\gamma\sigma (\zeta-u)v$, which after changing variables $v\longmapsto(\gamma\sigma)^{-1}\ep^{1/2}v$ coincides with the exponent in the integral defining $\fT_{-\ft,-\fx}(\zeta,u)$.
This suggests that the following limit should hold:

\begin{prop}\label{prop:barS-eps-converges}
For any $M>0$ one has
\begin{equation}\label{eq:barS-eps-converges}
\bar\fT^\ep_{-\ft,\fx}(\zeta,u) \xrightarrow[\ep\to0]{} \fT_{-\ft,\fx}(\zeta,u).
\end{equation}
uniformly over $|\zeta-u|\leq M$.
\end{prop}

We will prove this result in Appdx. \ref{sec:trcl-estim}.

\vskip6pt
\paragraph{\emph{Limit of $\bar\fT^{\ep,\epi(X_0)}_{\ft,-\fx}(\zeta,u)$}}
\enspace 
We end this part by explaining how the limit of $\bar\fT^{\ep,\epi(X_0)}_{\ft,-\fx}(\zeta,u)$ arises.
We have
\begin{align}\label{eq:hitrescaled}
&\bar\fT^{\ep,\epi(X_0)}_{\ft,-\fx}(\zeta,u)=\gamma\sigma\ep^{-1/2}(1+\theta)^{t} \SN_{n}^{\epi(X_0)}(y, x)\\
&\qquad=\gamma\sigma\ep^{-1/2}(1+\theta)^{t}\ee_{\Bpl_0=y}\!\left[\SN_{(\taupl,n_j]}(\Bpl_\taupl, x_j)\uno{\tau<n_j}\right]\\
&\qquad=\ee_{\fB^\ep_0=\zeta}\!\left[\gamma\sigma\ep^{-1/2}(1+\theta)^{t}\SN_{(\taupl,n_j]}(\gamma\sigma\ep^{-1/2}\fB^\ep_{\sigma^{-2}\ep\taupl}, x_j+\rho^{-1}\taupl)\uno{\taupl<n_j}\right]
\end{align}
where the scaled random walk $(\fB^\ep_{\fy})_{\fy\geq0}$ is given by
\begin{equation}
\fB^\ep_{\fy}\coloneqq\gamma^{-1}\sigma^{-1}\ep^{1/2}(\Bpl_{\sigma^2\ep^{-1}\fy}+\rho^{-1}\sigma^2\ep^{-1}\fy).\label{eq:scaledBpl}
\end{equation}
This walk converges to a Brownian motion $\fB(\fx)$ with diffusivity $2$; this is obtained by studying the associated transition probabilities $\Qpl_{(n,n']}(x,x')$ using the same argument we used for the term $Q_{(n,n']}(x,x')$ (the only difference between $Q$ and $\Qpl$ in this context is that the arrangement of sequential and parallel particles is shifted by $1$, but this makes no difference in the argument).
On the other hand, the scaled hitting time $\sigma^2\ep^{-1}\taupl$ is nothing but the hitting time by $\fB^\ep_\fy$ of the scaled curve $\gamma^{-1}\sigma^{-1}\ep^{1/2}(X_0(\sigma^{2}\ep^{-1}\fy)+\rho^{-1}\sigma^{2}\ep^{-1}\fy)$ which, by \eqref{eq:TASEPini}, converges to $-\fh^-_0$.
This means that, as $\ep\to0$, this hitting time becomes the hitting time $\ftau^{\epi}$ by $\fB$ to the epigraph of $-\fh^-_0$.
All of this together with \eqref{eq:barS-eps-converges} leads to
\begin{equation}
\bar\fT^{\ep,\epi(X_0)}_{\ft,-\fx}(\zeta,u)
\xrightarrow[\ep\to0]{}\fT^{\epi(-\fh_0^{-})}_{-\ft,-\fx}(\zeta,u)
\coloneqq\ee_{\fB_0=\zeta}\!\left[\fT_{-\ft,-\fx-\ftau^{\epi}}(\fB_{\ftau^{\epi}},u)\right],\label{eq:346}
\end{equation}
where $\ftau^{\epi}$ is now the hitting time of the epigraph of $-\fh_0^-$.
Of course, a full proof of this limit, even at the pointwise limit level, necessitates some additional estimates on $\SN_{(0,n_j]}^{\epi(X_0^\ep)}$ in order to justify the joint convergence of this kernel and of the random walk and hitting time after rescaling.
This can be implemented as in \cite[Appdx. B.2]{fixedpt} once the necessary control on $\SN_{(0,n_j]}$ has been obtained, but is beyond the level of detail which we are providing here.

\subsubsection{Conclusion}

Putting the above computations together we deduce that the limiting kernel $\bar\fK$ defined in \eqref{eq:barKlim} is given by
\begin{equation}
\bar\fK(\fx_i,u_i;\fx_j,u_j)=-e^{(\fx_i-\fx_j)\partial^2}\uno{\fx_i>\fx_j}+(\fT_{-\ft,\fx_i})^*\fT^{\epi(-\fh_0^-)}_{-\ft,-\fx_j}.%
\end{equation}
The right hand side is an ``upside down'' version of $\fK^{\hypo(\fh_0)}_{\ft,\uptext{ext}}$: one has $\fK^{\hypo(\fh_0)}_{\ft,\uptext{ext}}(\fx_i,u_i;\fx_j,u_j)=\bar\fK^*(\fx_i,-u_i;\fx_j,-u_j)$, which also implies 
\begin{equation}\label{eq:reflect}
 \det(\fI-\bar\P_{-\fa}\bar\fK\bar\P_{-\fa})=\det(\fI-\P_{\fa}\fK^{\hypo(\fh_0)}_{\ft,\uptext{ext}}\P_{\fa}),
\end{equation}
see \cite[Sec. 3.3]{fixedpt} for the details.
In view of \eqref{eq:fixedptdet}, this leads to the following statement:

\vs
\begin{quote}\it 
  For $X_t$ the mixed sequential/parallel version of right Bernoulli TASEP, with blocks of $a$ sequential particles followed by blocks of $b$ parallel particles, one has
  \[-\gamma^{-1}\sigma^{-1}\ep^{1/2}\Big(X_{\ep^{-3/2}\ft}\big(\alpha\ep^{-3/2}\ft-\sigma^2\ep^{-1}\fx\big)-\beta\ep^{-3/2}\ft-\rho^{-1}\sigma^2\ep^{-1}\fx\Big)\longrightarrow\fh(\ft,\fx;\fh_0)\]
  (with $\fh(\ft,\fx;\fh_0)$ the KPZ fixed point started at $\fh_0$) for fixed $\ft>0$ and in the sense of finite dimensional distributions, with the parameter choices specified in \eqref{eq:paramset} and \eqref{eq:thetaset} and initial data with particle density $\rho$ and satisfying \eqref{eq:TASEPini}.
\end{quote}
\vs

The arguments presented here provide strong support for the validity of this statement.
As explained above, a full proof of the convergence would entail a non-trivial upgrade of Propositions \ref{prop:S-eps-converges} and \ref{prop:barS-eps-converges} to establish the limit \eqref{eq:barKlim} in trace norm.

Setting $\lambda=0$ and $\lambda=1$, the above setting recovers the sequential and parallel cases.
In the case of sequential TASEP the scaling parameters simplify somewhat: one has 
\begin{equation}\label{eq:seq-params}
  \textstyle\theta=\frac{p(1-\rho)}{q},\quad\alpha=\frac{pq\rho^2}{(1-p\rho)^2},\quad\beta=\frac{p(1-2\rho+p\rho^2)}{(1-p\rho)^2},\quad\gamma=\sqrt{\frac{1-\rho}{2\rho^2}}\qand \sigma=\frac{\sqrt{2}(pq)^{1/3}\rho^{2/3}(1-\rho)^{1/6}}{1-p\rho}.
\end{equation}
For parallel TASEP the parameters still look rather complicated, but do simplify in the case of $\rho=1/2$: 
\[\textstyle\theta=\frac{1-\sqrt{q}}{\sqrt{q}},\quad\alpha=\frac{1-\sqrt{q}}{2},\quad\beta=0,\quad\gamma=q^{1/4}\qand\sigma=2^{-1/3}p^{1/3}q^{-1/12}.\]
Both cases match the scaling that has previously appeared in the literature for the standard case $\rho=1/2$.

\subsection{TASEP with a block of caterpillars at the bulk of the system}\label{sec:caterpillar-block}

As a final example, we consider a situation where we modify the memory length of a macroscopic block of particles.
Here we will restrict to the simplest possible version of this setting, where the block of caterpillars is placed at the bulk of the system, that is, far from both the first particles and those lying near the characteristic line for the system.
We describe the situation next; for some comments on other possibilities see the discussion at the end of this section.

The system will be based on discrete time TASEP with sequential update and a fixed jump parameter $p$, at a fixed choice of density $\rho\in(0,1)$.
Given these two values, we fix the choices of parameters $\alpha$, $\beta$, $\gamma$, $\sigma$ and $\theta$ as in the previous section with $\lambda=0$, i.e., through \eqref{eq:seq-params}.
We also assume that the initial data $X_0$ satisfies \eqref{eq:TASEPini}.
The analysis from the previous section then tells us that we should look at the system at time $t=\ep^{-3/2}\ft$ and focus on particles with labels near $\alpha\ep^{-3/2}\ft$.

Now we fix an $\alpha'\in(0,\alpha)$ and a $b>0$, and we place a block of 
\[M=b\ep^{-1}\]
caterpillars with length $L=\kappa+1$ starting at label $\alpha'\ep^{-3/2}\ft$ (with the same jump parameter $p$).
In other words, we choose the parameters of the system to be $v_i=p/q$ for all $i$ and
\[\kappa_i=1+(\kappa-1)\uno{\alpha'\ep^{-3/2}\leq i<\alpha'\ep^{-3/2}+M}.\]
Recall that in order to apply our results, $X_0$ has to satisfy $X_0(i)-X_0(i+1)\geq\kappa_i$, which in our case imposes the condition 
\begin{equation}\label{eq:X0blockcondition}
  X_0(i)-X_0(i+1)\geq\kappa\qquad\uptext{for }\alpha'\ep^{-3/2}\leq i<\alpha'\ep^{-3/2}+b\ep^{-1}.
\end{equation}
We will simply assume that our initial data satisfies this assumption.
As a simple example, one could consider $d$-periodic initial data $X_0(i)=-di$, $i\geq1$, (corresponding to density $\rho=1/d$), which satisfies this condition for any $\kappa\leq d$.

The introduction of particles with longer memory introduces a delay in the system (since the tail of a caterpillar is located at the position where the head was $L-1$ units of time in the past), which by the TASEP dynamics should induce a similar delay on the particles to the right of the perturbation.
Hence if we observe the system at time $t=\ep^{-3/2}\ft$ and focus on particles with indices $n_i=\alpha\ep^{-3/2}\ft-\sigma^2\ep^{-1}\fx_i$ as in \eqref{eq:tn-scaling}, we should expect that the effective time during which these particles evolve is reduced by an amount of order $\ep^{-1}$, in view of our choice of $M$.
If this is the case, then we should expect particle $n_i$ to lie slightly to the left of the position $\beta\ep^{-3/2}\ft+\rho^{-1}\sigma^2\ep^{-1}$ predicted (up to order $\ep^{-1/2}$) by the scaling \eqref{eq:tn-scaling}, and the displacement should be of order $\ep^{-1}$.
But by KPZ universality it is natural to expect that the system will still converge to the KPZ fixed point, if we modify the scaling slightly in order to take the perturbation into account.
Our goal is to work this out.
To this end, we modify the scaling from \eqref{eq:tn-scaling} as follows: for fixed $\ft>0$ and $\fx_i,\fa_i\in\rr$, $i\in\set{m}$, we use
\begin{equation}\label{eq:sc-long1}
t=\ep^{-3/2}\ft,\qquad n_i=\alpha\ep^{-3/2}\ft-\sigma^2\ep^{-1}\fx_i, \qquad r_i= \beta\ep^{-3/2}\ft+\ep^{-1}(\rho^{-1}\sigma^2\fx_i-\mu)-\gamma\sigma\ep^{-1/2}\fa_i
\end{equation}
for the parameters and
\begin{equation}
x_i=\beta \ep^{-3/2}\ft+\ep^{-1}(\rho^{-1}\sigma^2\fx_i-\mu)+\gamma\sigma\ep^{-1/2}u_i\label{eq:xi-scaling-long1}
\end{equation}
for the kernel variables, with $\mu$ a parameter to be determined.

Consider first the term $Q_{(n_i, n_j]}(x_i,x_j)$ appearing in \eqref{eq:kernel-explicit} for $n_i<n_j$.
In view of our choices, $Q_{(n_i, n_j]}(x_i,x_j)$ only uses the parameters $v_i$ and $\kappa_i$ associated to labels $i$ at distance of order $\ep^{-1}$ from $\alpha\ep^{-3/2}\ft$, so since $\alpha'<\alpha$, for small $\ep$ it simply corresponds to the same kernel as for the sequential case.
Therefore, as in the previous example, the conclusion is that $\gamma\sigma\ep^{-1/2}Q_{(n_i,n_j]}(x_i,x_j)$ converges as $\ep\to0$ to $e^{(\fx_i-\fx_j)\p^2}\tsm(u_i,u_j)$ by the central limit theorem.

Next we need to study the scaled kernel $\gamma\sigma\ep^{-1/2}(\SM_{-n_i})^*\SN^{\epi(X_0^\ep)}_{(0,n_j]}(x_1,x_2)$.
Proceeding as before, we focus first on the limit of $\gamma\sigma\ep^{-1/2}(\SM_{-n_i})^*(x_i,y)$ with $y=\gamma\sigma\ep^{-1/2}u$, for which we get as in \eqref{eq:Snihats}
\[\textstyle\SM_{-n_i}(y, x_i) = \left(\frac{\hat\theta}{1-\hat\theta}\right)^{n_i}\frac{1}{2\pi\I}\oint_{\gamma_{\hat r}}\d w\,\frac{\hat\theta^{x_i - y}(1-w)^{n_i}}{w^{x_i - y + n_i + 1}} (1+pw/q)^t \left( \frac{q + p\hat\theta}{q + pw}\right)^{M\kappa}\]
(where we used the fact that $n_i>\alpha'\ep^{-3/2}\ft+M$ for small $\ep$).
Continuing the argument in the same way leads to
\[\textstyle\gamma\sigma \ep^{-1/2} (1+\theta)^{-t} \SM_{-n_i}(y, x_i) 
= \gamma\sigma\ep^{-1/2} \frac{1}{2\pi\I}\oint_{\Gamma}\d v\, e^{F_\eps(\hat\theta(1 - v))},\]
where $\Gamma$ is a negatively oriented circle of radius $\hat\rrin/\hat\theta$ centered at $1$ and
\begin{equation}%
F_\eps(\hat\theta(1 - v)) = \ep^{-3/2}\ft f_3(v) + \ep^{-1}\fx_i f_2(v) +\ep^{-1}\tilde f_2(v)+ \ep^{-1/2}(u_i-u) f_1(v) + f_0(v),
\end{equation}
with
\begin{equation}%
\begin{aligned}
&f_3(v) = - (\alpha + \beta)\log(1 - v) + \alpha \log(1 + A v) + \log( 1 - B v), \\
&f_2(v) = - \tfrac{1-\rho}{\rho}\sigma^2\log(1 - v) - \sigma^2\log(1 + A v),\quad\tilde f_2(v)=-\mu\log(1-v)-b\kappa\log(1-Bv),\\
&f_1(v) = \gamma \sigma\log(1 - v), \qquad\qquad f_0(v) = -1,
\end{aligned}
\end{equation}
where $A$ and $B$ are defined as above (with $\lambda=0$).
Except for $\tilde f_2$, which is new, these are the same functions as in the sequential (i.e., $\lambda=0$) case of the previous example, and so they satisfy the same expansions as those in \eqref{eq:f-asymptotics-0-a}/\eqref{eq:f-asymptotics-a}.
Proceeding in a similar way for $\tilde f_2$ (using \eqref{eq:log-expansion}) gives
\[\tilde f_2(v)=-(\mu-b\kappa B)v-\tfrac12(\mu-b\kappa B^2)v^2+E_2(v).\]
Hence we need to choose
\begin{equation}
  \label{eq:mu-choice}
  \mu=b\kappa B=\tfrac{p(1-\rho)}{1-p\rho}b\kappa
\end{equation}
Under this choice we get $\tilde f_2(v)=-\fb(\gamma\sigma)^2v^2+E_2(v)$ with
\begin{equation}
  \fb=\tfrac1{2(\gamma\sigma)^2}(\mu-b\kappa B^2)=\tfrac{pq(1-\rho)}{2(\gamma\sigma)^2(1-p\rho)^2}b\kappa=\tfrac{(pq)^{1/3}}{2\rho^{1/3}(1-\rho)^{1/3}}b\kappa.\label{eq:fb-choice}
\end{equation}

Continuing as in the previous example (see \eqref{eq:Feps-approx-a}), the (formal) conclusion of all this is that
\[\gamma\sigma \ep^{-1/2} (1+\theta)^{-t} \SM_{-n_i}(y, x_i)
\xrightarrow[\ep\to0]{}\fT_{-\ft,\fx_i-\fb}(u_i,u).\]
The same argument gives
\[\gamma\sigma \ep^{-1/2} (1+\theta)^{t} \SN_{n_j}(y, x_j)
\xrightarrow[\ep\to0]{}\fT_{-\ft,-\fx_j+\fb}(u,u_i).\]
Now consider $\SN_{n_j}^{\epi(X_0^\ep)}(y, x_j)=\ee_{\Bpl_0=y}[\SN_{(\taupl,n_j]}(\Bpl_{\taupl},x_j)\uno{\taupl<n_j}]$.
Note that in the hitting problem defining this kernel in \eqref{eq:SN-epi}, the walk $B^+$ starts at the origin.
Moreover, starting at the origin, and for the first $\alpha'\ep^{-3/2}\ft$ number of steps, the walk has homogeneous jumps coinciding with those of the sequential case.
But the walk $B^+$ and the curve $X_0$ it has to hit are being rescaled diffusively at an $\ep^{-1}$ timescale, so the whole hitting problem is effectively a hitting problem for the homogenous walk\footnote{Here the condition $\fh_0(x)\leq c_1|x|+c_2$ for the $\UC$ initial condition $\fh_0$ and the fact that $X_0$ converges in $\UC$ to $\fh_0$, together with the definition of convergence appearing in \cite[Sec. 3]{fixedpt}, ensure that the walk actually hits the epigraph of $X_0$ at a time of order $\ep^{-1}$ with high probability so that the the remaining part of the hitting problem does not contribute in the limit. This is actually contained in the arguments appearing in \cite{fixedpt}, and we omit the details.}.
In any case, the conclusion is that we are then in essentially the same setting as in the sequential TASEP case corresponding to Sec.~\ref{sec:caterpillars-mix} with $\lambda=0$ and then, proceeding exactly as in \eqref{eq:hitrescaled}, we get
\[\gamma\sigma_\ep \ep^{-1/2}(1+\theta)^{t} \SN_{(0,n_j]}^{\epi(X_0^\ep)}(y, x_j)
\xrightarrow[\ep\to0]{}\fT^{\epi(-\fh_0^{-})}_{-\ft,-\fx_j+\fb}(u,u_j)
.\]

Putting everything together we see that the rescaled kernel $K_t(n_i,x_i;n_j,x_j)$ converges to
\begin{equation}
\bar\fK(\fx_i,\cdot;\fx_j,\cdot)\coloneqq-e^{(\fx_i-\fx_j)\p^2}\uno{\fx_i>\fx_j}+\fT_{-\ft,\fx_i-\fb}^*\fT^{\epi(-\fh_0^-)}_{-\ft,-\fx_j+\fb}.\label{eq:barKlong1-a}
\end{equation}
Thus, as in the previous example, this provides strong support for the following statement (a full proof of which would require again a non-trivial upgrade to trace class convergence of the kernels):

\vs
\begin{quote}\it
For $X_t$ the version of right Bernoulli TASEP with sequential update where particles with labels between $\alpha'\ep^{-3/2}\ft$ and $\alpha'\ep^{-3/2}\ft+\kappa b\ep^{-1}-1$ are have memory $\kappa$, and assuming that the initial condition $X_0$ satisfies \eqref{eq:TASEPini} and \eqref{eq:X0blockcondition}, we have
\begin{equation}\label{eq:longscalinglimit}
-\sigma^{-1}\ep^{1/2}\Big(X_{\ep^{-3/2}\ft}(\alpha\ep^{-3/2}\ft-\sigma^2\ep^{-1}\fx)-\beta\ep^{-3/2}\ft-(\rho^{-1}\sigma^2\fx-\mu)\ep^{-1}\Big)\longrightarrow\fh(\ft,\fx-\fb;\fh_0)
\end{equation}
for fixed $\ft>0$, in the sense of finite dimensional distributions, with the parameter choices specified in \eqref{eq:seq-params}, \eqref{eq:mu-choice} and \eqref{eq:fb-choice}, and where the limit on the right hand side is the KPZ fixed point with initial condition $\fh_0$.
\end{quote}
\vs

In words, a block of $b\ep^{-1}$ particles with memory $\kappa$ placed in the bulk has the effect of shifting the expected position of particles along the characteristic direction by $-\frac{p(1-\rho)}{1-p\rho}b\kappa\ep^{-1}$, and correspondingly, the process describing the limiting fluctuations experiences a spatial shift of $-\frac{(pq)^{1/3}}{2\rho^{1/3}(1-\rho)^{1/3}}b\kappa$.

If instead the block of caterpillars has size of order $\ep^{-1/2}$, a similar analysis shows that the particle positions are now shifted by an amount of that order, while the limiting process has no spatial shift.
If the block of caterpillars is placed along the characteristic direction defined by $n_i=\alpha\ep^{-3/2}\ft$ (i.e., $\alpha'=\alpha$ in the notation above) the situation is similar: the same parts of the kernel are affected in the limit, but the overall effect depends now on the values of $\fx_i$, which determine how much of the perturbed block particle $n_i$ sees.
This is why we chose $\alpha'<\alpha$, since $\alpha'=\alpha$ leads to the same kind of shift, but with additional, unnecessary complications.
Finally, if the block is placed at the beginning of the system, the situation is different, as the perturbation now affects the distribution of the random walk $\Bpl$ involved in the hitting problem.
We leave the analysis of this last case for future work.

\appendix

\section{Convergence of the mixed TASEP kernels}\label{sec:trcl-estim}

The goal of this section is to prove Props. \ref{prop:S-eps-converges} and \ref{prop:barS-eps-converges}.
The basic method which we are going to use is a steep descent analysis.
The idea is that, given a complex integral $\int_\gamma \d v\, e^{f(v)}$ with $f$ having a critical point at $v_0$, one deforms the original contour $\gamma$ to \emph{steep descent contour}, which we will still call $\gamma$, which is so that $\gamma$ passes through $v_0$ and $\Re f(v)$ decreases along the contour as $v$ goes away from $v_0$ (up to some other point $v_1\neq v_0$ where the contour is closed), so that the integral can be estimated based on the size of $|e^{f(v_0)}|$.
Throughout this part we will use the notation and parameters choices specified in Sec. \ref{sec:caterpillars-mix}.

We begin by specifying the basic contour which we will employ in our steep descent analysis of the kernels.
The contour will depend on some parameters,
\[\phi\in(\tfrac{\pi}6,\tfrac{\pi}2),\qquad\ell>0,\qqand\psi\in(0,\pi-\phi),\]
which will be chosen differently depending on which kernel we are studying and depending on the parameters of the model.
The basic contour which we will use is given as 
\[\mathcal{C}=\mathcal{C}_1\cup \mathcal{C}_2\]
with $\mathcal{C}_1$ and $\mathcal{C}_2$ defined as follows (we omit their dependence on $v_0$, $\ell$, and $\phi$).
First, we set
\begin{equation}\label{eq:C-circ-contours}
\mathcal{C}_1 = \textstyle \{r e^{-\I \phi}\!: r \in [0,\ell]\} \cup \{r e^{\I \phi}\!: r \in [0,\ell]\},
\end{equation}
oriented with increasing imnaginary part.
Next we let $\tilde\Gamma$ be a circle centered at some $R_0>0$ and going through the endpoints $\ell\tts e^{\pm\I \phi}$ of $\mathcal{C}_1$. 
We denote the radius of this circle by $R$.
Then we let $\mathcal{C}_2$ be the part of $\tilde\Gamma$ lying to the right of $\mathcal{C}_1$, oriented clockwise (see Fig.~\ref{fig:contours}).
The choice of $R_0$ is not free, instead we want it to be so that the angle between the rays connecting the center of $\tilde\Gamma$ with the origin and with $\ell\tts e^{\I \phi}$ is precisely the parameter $\psi$;
the law of sines, applied to the triangle built out of those three points, yields 
\[R = \tfrac{\sin\phi}{\sin \psi}\ell\qqand R_0 = \tfrac{\sin (\pi-\phi-\psi)}{\sin\psi}\ell,\]
while $\mathcal{C}_2$ can be described parametrically as
\begin{equation}\label{eq:C2}
\mathcal{C}_2 = \{R_0 + R e^{-\I \vartheta}\!: \vartheta \in [\psi-\pi, \pi-\psi]\}.
\end{equation}

In order to deform the contours in \eqref{eq:123-a} and \eqref{eq:barSetp-exp-a} we need to make sure that no singularities of the integrand are crossed:
\begin{itemize}
\item For \eqref{eq:123-a} there is only a pole at $v=1$, which will remain inside the contour if 
\begin{equation}\label{eq:contour-condition}
R_0+R>1.
\end{equation}
\item The case of \eqref{eq:barSetp-exp-a} needs a bit more care, because the contour now needs to enclose the singularity at $1/\hat\theta-1=1/A$, which can be arbitrarily large depending on the choice of parameters.
In any case, what we need is to ensure that
\begin{equation}\label{eq:contour-bar-condition}
\textstyle R_0+R=(\frac{\sin\phi}{\sin{\psi}}+\frac{\sin(\pi-\phi-\psi)}{\sin\psi})\ell>A^{-1}
\end{equation}
which, given $\phi$ and $\psi$, will always hold if $\ell$ is large enough.
\end{itemize}

\begin{figure}[t]
\centering
\includegraphics[width=0.4\textwidth]{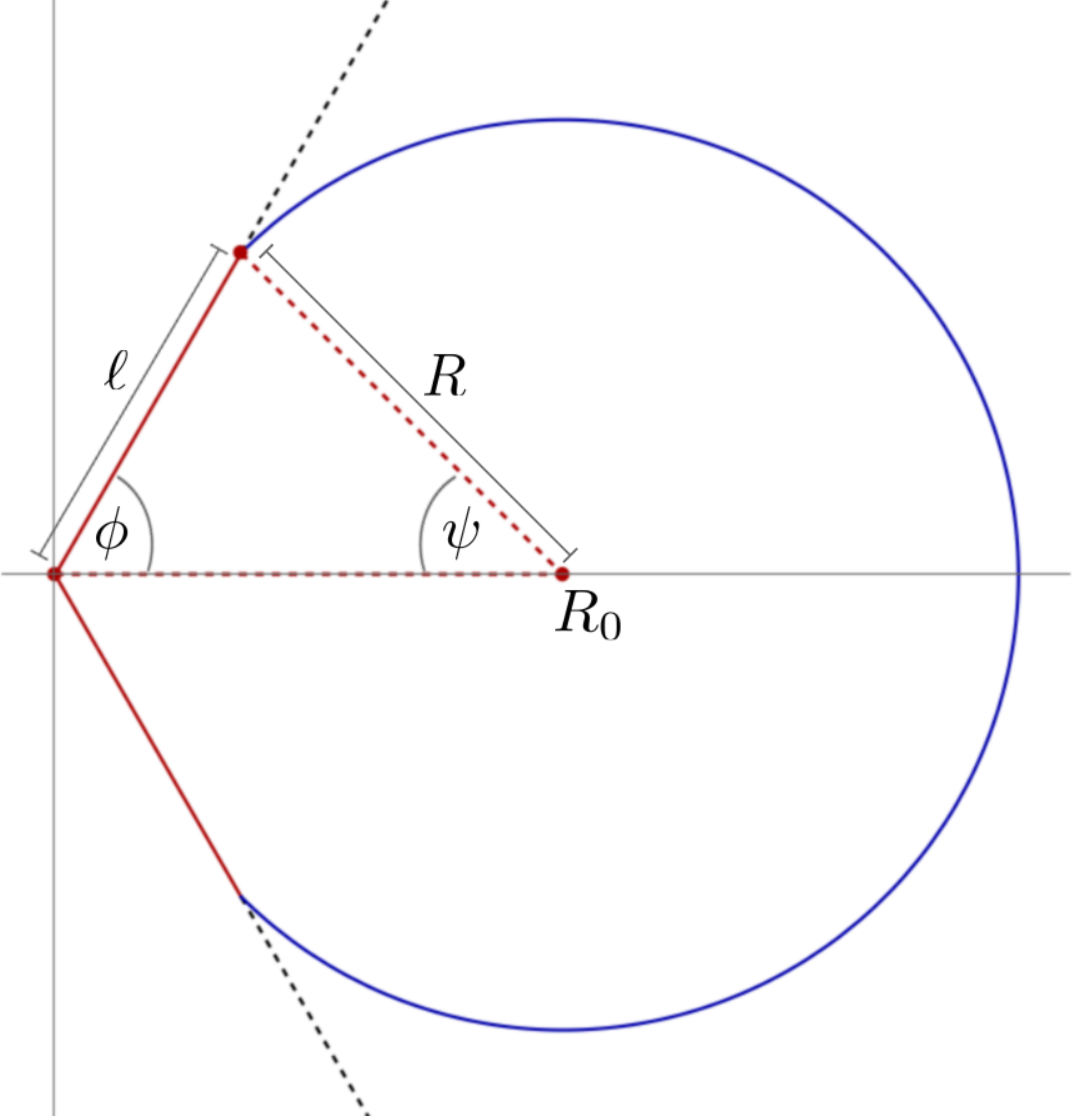}
\caption{The contour $\mathcal{C}=\mathcal{C}_1 \cup \mathcal{C}_2$ chosen for the steep descent analysis. $\mathcal{C}_1$ is the red part of the two rays departing off the origin, while $\mathcal{C}_2$ is the blue arc of the circle $\tilde\Gamma$.
The radius $R$ of the circle and the coordinate $R_0$ of its center can be computed based on the length $\ell$ of the left side of the triangle and the values of the two bottom angles $\phi$ and $\psi$.}
\label{fig:contours}
\end{figure}

We turn now to the proof of the two results, which we will handle together.

\begin{proof}[Proof of Props.  \ref{prop:S-eps-converges} and \ref{prop:barS-eps-converges}.]
Recall that the functions $F_\ep(\hat\theta(1-v))$ and $\bar F_\ep(\hat\theta(1+v))$ appearing in \eqref{eq:123-a} and \eqref{eq:barSetp-exp-a} are given in \eqref{eq:F-eps-expansion-a} and \eqref{eq:bar-Feps-expansion-a}, and that their leading order terms $\ep^{-3/2}\ft f_3(v)$ and $\ep^{-3/2}\ft\bar f_3(v)$ have a critical point at the origin.
Throughout the proof we assume that $|\zeta-u|\leq M$ for a fixed choice of $M>0$.

In order to use the steep descent method we need to show that $\Re f_3(v)$ and $\Re\bar f_3(v)$ have a maximum at $v = 0$ and decay strictly as $v$ departs from the origin along the contour $\mathcal{C}_1 \cup \mathcal{C}_2$ in both the upper and lower half-planes.
We will do this separately for the two cases, and after that conclude the proof.
For clarity, we organize the proof in four steps.

\vskip2pt

\noindent\underline{Step 1: analysis of $\Re f_3(v)$.}
\enspace
In this part we choose $\phi=\pi/4$ and $\ell=\sqrt{2}/(1+\sqrt{3})$.
This and the relation between $R_0$ and $\psi$ yield $R_0+R=\frac{\sqrt2}{1+\sqrt{3}}(\frac{\sqrt{2}}{2\sin{\psi}}+\frac{\sin(3\pi/4-\psi)}{\sin\psi})$, which is easily checked to be larger than $1$ (so that \eqref{eq:contour-condition} holds) for $\psi<\pi/3$.

Let $\mathcal{C}_i^+$ be the part of $\mathcal{C}_i$ lying on the upper half-plane. 
Then by symmetry it is enough to check the decay only on the contour $\mathcal{C}^+_1 \cup \mathcal{C}^+_2$.
On $\mathcal{C}^+_1$ we use the identity $\Re \log (1 + c e^{\I \phi}) = \log |1 + c e^{\I \phi}| = \frac{1}{2} \log (1 + 2 c \cos \phi + c^2)$ to write  
\begin{align}\label{eq:Re-analysis}
\tfrac{\d}{\d r} \Re f_3(r e^{\I \pi / 4}) = (\alpha + \beta) g(- r) + \alpha A\ts g(A r) - (1 - \alpha \lambda) B\tts g(-B r)
\end{align}
with $g(x) = \tfrac{\sqrt{2} + 2 x}{2(1 + \sqrt{2}x + x^2)}$, which yields, after some simplification,
\begin{multline}\label{eq:Re-analysis2}
\tfrac{\d}{\d r} \Re f_3(r e^{\I \pi / 4}) = \tfrac{p q \theta^2 r^2}{\sqrt{2}(1+\theta)^2 (p - q \theta)^2 (q(p + q \lambda) \theta^2 + 2 p q (1 - \lambda) \theta + p(q + p \lambda))} \\
\times \tfrac{(1+\theta) (q \theta - p) - \theta (q - p + 3q \theta) r^2 + \sqrt2 q\theta^2 r^3}{(1-\sqrt{2}r+r^2) (1+\sqrt{2}Ar+A^2r^2) (1- \sqrt{2}B r+ B^2 r^2)}.
\end{multline}
All the factors, except the numerator of the second fraction, are clearly strictly positive.
The latter may be written as $(\sqrt2 r^3 - 3 r^2 + 1) q \theta^2 + (r^2 - 1) (p-q) \theta - p$. 
For any $r\in[0,\ell]$ this parabola with respect to $\theta$ is convex, and thus it is strictly negative for $\theta \in [0, \frac{p}{q}(1-\rho)]$ (which is enough for us thanks to \eqref{eq:theta-bd}), as it is so at $\theta = 0$ and $\theta = \frac{p}{q}(1-\rho)$.
This shows that $\tfrac{\d}{\d r} \Re f_3(r e^{\I \pi / 4}) < 0$ for $r \in (0, \ell\tts]$ and it vanishes at $r = 0$.

Similarly, on $\mathcal{C}^+_2$ we have 
\begin{align}
\tfrac{\d}{\d \vartheta} \Re f_3(R_0 + R e^{-\I \vartheta}) &= - \Bigl( \tfrac{(\alpha + \beta) a_1}{1 - 2 a_1 \cos \vartheta + a_1^2} + \tfrac{\alpha a_2}{1 + 2 a_2 \cos \vartheta + a_2^2} - \tfrac{(1 - \alpha \lambda) a_3}{1 - 2 a_3 \cos \vartheta + a_3^2} \Bigr)\tsm\sin\vartheta\\
&= - \tfrac{\eta(\cos \vartheta)}{(1 - 2 a_1 \cos \vartheta + a_1^2) (1 + 2 a_2 \cos \vartheta + a_2^2) (1 - 2 a_3 \cos \vartheta + a_3^2)} \sin\vartheta,\label{eq:steep}
\end{align}
with $a_1 = \frac{R}{1-R_0}$, $a_2 = \frac{A R}{1+A R_0}$, $a_3 = \frac{B R}{1-B R_0}$ and 
\begin{multline}
\eta(x) = (\alpha + \beta) a_1 (1 + 2 a_2 x + a_2^2) (1 - 2 a_3 x + a_3^2) + \alpha (1 - 2 a_1 x + a_1^2) a_2 (1 - 2 a_3 x + a_3^2) \\
- (1 - \alpha \lambda) (1 - 2 a_1 x + a_1^2) (1 + 2 a_2 x + a_2^2) a_3. \label{eq:function-eta}
\end{multline}
The denominator in \eqref{eq:steep} is strictly positive, so since we need the expression to be strictly negative for $\vartheta \in [\psi - \pi, 0)$ and since $\sin(\vartheta)<0$ there, it is enough to show that the parabola $\eta(x)$ satisfies $\eta(x) < 0$ for $x\in[-1,1  ]$.
From the definition of the $a_i$'s and the scaling parameters, the coefficient of $x^2$ in $\eta(x)$ is $4 a_1 a_2 a_3(1-\alpha\lambda - \beta)$, which is strictly positive for $R_0\in(0,1)$ (because so are the $a_i$'s and since clearly $\alpha\lambda+\beta<1$), 
so the parabola is convex in this region.
Recall that, with our choice of contour, $R_0 = \frac{\sin(\frac{3\pi}{4}-\psi)}{\sin\psi}\frac{\sqrt{2}}{1+\sqrt{3}}$, so $R_0 < 1$ is equivalent to $\psi > \frac{\pi}{6}$. 
Hence, since the parabola is strictly convex for $R_0<1$, $\eta(x) < 0$ will follow for $x\in[-1,1]$ if we are able to choose the angle $\psi \in (\frac{\pi}{6}, \frac{\pi}{2})$ so that $\eta(-1),\eta(1)\leq0$.
For this we write
\begin{equation}
\eta(1) = \tfrac{\xi(R, R_0)}{(1 - R_0)^2 (1 + (1 - R_0) \theta)^2 (p - q (1 - R_0) \theta)^2 (p q (1 + \theta)^2 + (p - q \theta)^2 \lambda)},
\end{equation}
where the denominator is strictly positive and $\xi(R, R_0)$ is a polynomial with respect to $R$ and $R_0$ satisfying 
\begin{equation}
\textstyle
\xi(\frac2{1+{\sqrt 3}}, 1) = -\frac {16pq\theta^2 ((6 + 4\sqrt {3})p + (p-q)\sqrt {3}\theta + 2(3+\sqrt {3})\theta^2 q)} {(1+\sqrt {3})^5}<0
\end{equation}
(the last inequality following from the strict positivity of the expression in parentheses for $\theta \in [0, \frac{p}{q}]$, which can be easily checked).
As a consequence, for $R_0$ sufficiently close to $1$ and $R$ sufficiently close to $\frac{2}{1+\sqrt 3}$, we have $\xi(R, R_0) < 0$. Now $R_0=\frac{\sin(\frac{3\pi}{4}-\psi)}{\sin\psi}\frac{\sqrt{2}}{1+\sqrt{3}}$ equals $1$ at $\psi=\frac{\pi}{6}$, so $R_0$ can be taken to be as close to $1$ as we need by taking $\psi$ to be larger than but close to $\frac{\pi}{6}$.
This also makes $R$ arbitrarily close to $\frac{2}{1+\sqrt 3}$, and we deduce then that $\eta(1)<0$ with this choice of $\psi$.
A very similar computation shows that we also have $\eta(-1)<0$ for $\psi$ sufficiently close to $\frac{\pi}{6}$.
We have shown thus that, with these choices, $\eta(x) < 0$ for $x \in [-1,1]$, which implies $\tfrac{\d}{\d \vartheta} \Re f_3(R_0 + Re^{-\I \vartheta}) < 0$.
This finishes proving the desired decay of $\Re f_3(v)$ along $\mathcal{C}_1\cup \mathcal{C}_2$.

\vskip2pt

\noindent\underline{Step 2: analysis of $\Re \bar f_3(v)$.}\enspace
We turn now to studying $\bar f_3(v)$.
We will specify the choice of contour parameters $\phi$, $\psi$ and $\ell$ a bit later, but we point out that in the argument it will be possible to take $\ell$ as large as we want, so that condition \eqref{eq:contour-bar-condition} will be met.

Consider first $\bar f_3(v)$ on the contour $\mathcal{C}_1^+$.
Proceeding as in \eqref{eq:Re-analysis2} we get
\begin{multline}\label{eq:Re-analysis3}
\tfrac{\d}{\d r} \Re \bar f_3(r e^{\I \phi}) = -\tfrac{p q \theta^2 r^2}{(1+2\cos(\phi)r+r^2) (1-2A\cos(\phi)r+A^2r^2) (1 + 2B\cos(\phi)r+ B^2 r^2)}\\
\times\tfrac{\zeta(r,\phi,\theta)}{(1+\theta)^2(p-q\theta)^2(pq(1+\theta)^2+(p-q\theta)^2\lambda)},
\end{multline}
with
\begin{multline}
\zeta(r,\phi,\theta) = (p+(q-p)\theta+q\theta^2)\cos(3\phi)
   +(-p+2(q-p)\theta+3q\theta^2)\cos(2\phi)r\\
   +((q-p)\theta+3q\theta^2)\cos(\phi)r^2+ qr^3.
\end{multline}
All factors in \eqref{eq:Re-analysis3} other than $\zeta(r,\phi,\theta)$ are clearly positive, so we want to show that $\zeta(r,\phi,\theta)\geq0$ for all $r\geq0$ and a suitable choice of $\phi$.
Note first that $\zeta(0,\phi,\theta)=-(p-q\theta)(1+\theta)\cos(3\phi)$, which is positive for $\phi\in(\pi/6,\pi/2)$, so it is enough to show that $\zeta(r,\phi,\theta)$ is non-decreasing in $r\geq0$.
Now $\tfrac{\d}{\d r}\zeta(r,\phi,\theta)$ is a parabola in $r$ 
with roots at 
\[\textstyle r_{\pm}=\frac {-((2 + 3\theta) q-1)\cos (\phi) \pm \sqrt {((2+3\theta)q-1)^2\cos(\phi)^2 - 3 q ((1 + \theta) (1 + 3\theta) q - 1 - 2 \theta)\cos (2\phi)}} {3\theta q}.\]
Assume first that $\theta<\frac{1-2q}{3q}$.
We claim that in this case we may choose $\phi$ so that the roots $r_\pm$ are not real.
If this is the case then $\zeta(r,\phi,\theta)$ has no critical points in $r$, and since its leading coefficient in $r$ is positive, it has to be increasing (in $r$, everywhere).
Writing $x=\cos(\phi)^2$ and using $\cos(2\phi)=2\cos(\phi)^2-1$, the roots are not real when 
\[((2+3\theta)q-1)^2x -3 q ((1 + \theta) (1 + 3\theta) q - 1 - 2 \theta)(2x-1)<0.\]
At $x=0$, the left hand side equals $3 q ((1 + \theta) (1 + 3\theta) q - 1 - 2 \theta)$, which is a convex parabola in $\theta$ with roots at $\theta_{\pm}=\frac{1-2q\pm\sqrt{1-q+q^2}}{2q}$; $\theta_-<0$ while $\theta_+>\theta$ by our assumption on $\theta$, so the left hand side is negative at $x=0$, and thus it is also negative for $x$ close enough to $0$, which means $\phi$ close enough to $\pi/2$.
This gives the desired decay for the exponent when $\theta<\frac{1-2q}{3q}$.

Next consider the case $\theta\geq\frac{1-2q}{3q}$.
If the roots $r_\pm$ are not real, we have the desired decay as above.
On the other hand, if the roots are real, we claim that we may choose $\phi$ so that $r_+\leq0$, in which case $\zeta(r,\phi,\theta)$ has no critical points for $r\geq0$ and then, as above, it is increasing there.
Our condition on $\theta$ yields $(2 + 3\theta) q-1\geq0$, so $r_+\leq0$ will hold provided that $((1 + \theta) (1 + 3\theta) q - 1 - 2 \theta)\cos (2\phi)>0$.
And the latter condition can be ensured by choosing $\phi\in(\pi/6,\pi/4)$ when $(1 + \theta) (1 + 3\theta) q \geq 1 + 2 \theta$ and $\phi\in(\pi/4,\pi/2)$ otherwise.
This choice of $\phi$ then ensures the desired decay whenever $\theta\geq\frac{1-2q}{3q}$.

We have shown thus that, with the specified choice of $\phi$, $\tfrac{\d}{\d r} \Re \bar f_3(r e^{\I \phi})\leq0$ for $r\geq0$ as needed.
Note that this argument does not place any restriction on $\ell$, which can thus be taken to be as large as we need.

Next we study $\bar f_3(v)$ on the contour $\mathcal C_2^+$.
Similarly to \eqref{eq:steep}, we have
\begin{equation}
\tfrac{\d}{\d \vartheta} \Re \bar f_3(R_0 + R e^{-\I \vartheta}) = \tfrac{\bar\eta(\cos \vartheta)}{(1 - 2 \bar a_1 \cos \vartheta + \bar a_1^2) (1 + 2 \bar a_2 \cos \vartheta + \bar a_2^2) (1 - 2 \bar a_3 \cos \vartheta + \bar a_3^2)} \sin\vartheta\label{eq:steep2}
\end{equation}
with $\bar a_1 = -\frac{R}{1+R_0}$, $\bar a_2 = \frac{A R}{A R_0-1}$, $\bar a_3 = -\frac{B R}{1+B R_0}$, and with $\bar\eta(x)$ given by the parabola $\eta$ defined in \eqref{eq:function-eta}, but with the $a_i$'s replaced by the new $\bar a_i$'s (this is just minus the right hand side of \eqref{eq:steep}, with $R_0$ and $R$ replaced by $-R_0$ and $-R$).
Proceeding as above, we need to show that $\bar\eta(x)>0$ for $x\in[-1,1]$.
From the above discussion, we may assume that $\phi$ is either close to $\pi/4$ or close to $\pi/2$.
We will also assume that $\psi$ is close to $\pi/6$.

The contour for the case $\phi$ close to $\pi/4$ is just a scaled version of the contour which we employed in the analysis of \eqref{eq:function-eta}.
As in that case (but now with a general choice of $\ell$), $\phi$ close to $\pi/4$ implies that we can take $R_0$ and $R$ to be as close as needed, respectively, to $R_0=\frac{1+\sqrt{3}}{\sqrt{2}}\ell$ and $R=\ell$.
Evaluating at these points, a computation shows that
\[\bar\eta(x)=pq^2\theta^4\big(\tfrac1{8}(43+27\sqrt3)+(11+6\sqrt3)x+(5+3\sqrt3)x^2\big)\ell^6+\CO(\ell^5),\]
and since the expression in parenthesis is positive for all $x\in[-1,1]$, the right hand side is bounded below by a positive constant, for all such $x$, if $\ell$ is large enough.
So we choose, as we may, $\ell$ to be as large as needed for this to hold, and then choose $(\phi,\psi)$ to be as close to $(\pi/4,\pi/6)$ as necessary so that a slightly worse (but still positive) lower bound holds.
Similarly, if $\phi$ is close to $\pi/2$ then $R_0$ and $R$ are close, respectively, to $R_0=\sqrt{3}\ell$ and $R=2\ell$.
Evaluating at these points, another computation shows that
\[\bar\eta(x)=pq^2\theta^4\big(49\sqrt3+168x+48\sqrt3x^2\big)\ell^6+\CO(\ell^5),\]
and the same argument allows us to obtain the desired decay.
We deduce that, with these choices, $\tfrac{\d}{\d \vartheta} \Re \bar f_3(R_0 + Re^{-\I \vartheta}) < 0$.
This finishes proving the required decay of $\Re \bar f_3(v)$ along $\mathcal C_1\cup \mathcal C_2$.

\vskip2pt

\noindent\underline{Step 3: bound and convergence for $\fT^{\ep}_{-\ft,\fx}$.}
\enspace
For $\delta \in (0,\ell)$, we define 
\begin{equation}
\mathcal{C}_1^\delta=\{v \in \mathcal C_1\!:|v| \leq \delta\}\label{eq:C1-delta}
\end{equation}
and write \eqref{eq:123-a} (with $\Gamma$ replaced by $\mathcal C_1\cup\mathcal C_2$) as
\begin{equation}\label{eq:Airy-prelimit-1}
\textstyle\fT^\ep_{\ft,\fx}(\zeta,u) = \textstyle \gamma\sigma\ep^{-1/2} \frac{1}{2\pi\I}\int_{\mathcal C^\delta_1}\d v\ts e^{F_\eps(\hat\theta(1 - v))} + \gamma\sigma\ep^{-1/2} \frac{1}{2\pi\I}\int_{(\mathcal C_1 \cup \mathcal C_2) \setminus \mathcal C^\delta_1}\d v\ts e^{F_\eps(\hat\theta(1 - v))}.
\end{equation}
The second integrand in \eqref{eq:Airy-prelimit-1} is bounded in absolute value by a constant (which is independent of $\delta$, $\eps$, $u$ and $\zeta$) times 
\begin{equation}\label{eq:Airy-prelimit-1-bound}
\textstyle \ep^{-1/2} \sup_{v \in (\mathcal C_1 \cup \mathcal C_2) \setminus C^\delta_1} e^{\Re F_\eps(\hat\theta(1 - v))}.
\end{equation}
We recall \eqref{eq:F-eps-expansion-a} and the fact that $\Re f_3(v)$ decays along $\mathcal C_1\cup \mathcal C_2$. 
It means that in the exponent in the preceding expression, $\Re f_3(v)$, attains its maximum at the boundary points of the contour, which are $\delta e^{\pm\I \pi / 4}$. 
Moreover, \eqref{eq:f-asymptotics-a} yields $\Re f_3(\delta e^{\pm\I \pi / 4}) \leq - c_1 \delta^3$ for some $c_1 > 0$ and $\delta$ sufficiently small. The real parts of the functions $f_2$ and $f_0$ can be bounded by a constant $c_2 > 0$. Then the preceding expression is estimated by 
\begin{equation}\label{eq:Airy-prelimit-1-bound-1}
\textstyle \ep^{-1/2} e^{- c_1 \ep^{-3/2}\ft \delta^3} e^{c_2 (\ep^{-1}|\fx| + \ep^{-1/2} M + 1)}
\end{equation}
(recall that we are assuming $|u- \zeta| \leq M$), which vanishes as $\eps \to 0$. %

Using \eqref{eq:F-eps-expansion-a} and the asymptotics \eqref{eq:f-asymptotics-0-a} and \eqref{eq:f-asymptotics-a}, the first integral in \eqref{eq:Airy-prelimit-1} becomes
\begin{equation}\label{eq:S-convergence-0}
\textstyle \gamma\sigma\eps^{-1/2} \frac{1}{2\pi\I}\int_{\mathcal C^\delta_1}\d v\, e^{\eps^{-3/2} \frac{(\gamma\sigma)^3}{3}\ft v^3 + \eps^{-1}(\gamma\sigma)^2\fx v^2 + \eps^{-1/2}\gamma\sigma(u - \zeta   ) v + g_\eps(v)},
\end{equation}
with 
\begin{equation}
g_\eps(v) = \eps^{-3/2}\tfrac{\ft}{3}E_3(v) + \eps^{-1}\fx E_2(v) + (u-\zeta)\eps^{-1/2}E_1(v) + E_0(v).\label{eq:g-eps}
\end{equation}
Rescaling the variable $v$ by $\eps^{1/2}/(\gamma\sigma)$, we can further rewrite this integral as
\begin{equation}\label{eq:S-convergence-2}
\textstyle \frac{1}{2\pi\I}\int_{\eps^{-1/2}\gamma\sigma\mathcal C^\delta_1}\d v\, e^{\frac{\ft}{3}v^3 + \fx v^2 - (u - \zeta)v} + \frac{1}{2\pi\I}\int_{\eps^{-1/2}\gamma\sigma\mathcal C^\delta_1}\d v\, e^{\frac{\ft}{3}v^3 + \fx v^2 + (u - \zeta)v} \bigl(e^{g_\eps(\eps^{1/2} v/(\gamma\sigma))} - 1\bigr).
\end{equation}
We focus now on the second integral in \eqref{eq:S-convergence-2}.
Note that for $v\in\eps^{-1/2}\gamma\sigma\mathcal  C^\delta_1$ we have by definition of the error terms $E_i$ and the assumption $|u-\zeta|\leq M$ that $|g_\eps(\eps^{1/2}v/(\gamma\sigma))|\leq c_3\tts\ep^{1/2}(1+|v|^4)$ for some suitable $c_3>0$.
On the other hand, for $w = x + y \I$ with $x, y \in \rr$ one has
\begin{equation}\label{eq:exponential-bound}
|e^w - 1| \leq |e^{y \I} (e^x - 1)| + |e^{y \I} - 1| \leq |x| e^{|x|} + |y| \leq (|x| + |y|) e^{|x|} \leq \sqrt 2 |w| e^{|w|}.
\end{equation}
Thus the second integral in \eqref{eq:S-convergence-2} is bounded in absolute value by a constant multiple of
\begin{equation}
\textstyle\ep^{1/2}\int_{\eps^{-1/2}\gamma\sigma\mathcal C^\delta_1}\d v\, (1+|v|^4) e^{\Re(\frac{\ft}{3}v^3 + \fx v^2 + (u - \zeta )v) + \ep^{1/2}c_3(1+|v|^4)}.\label{eq:S-convergence-3}
\end{equation}
Since $\ep^{1/2}|v|\leq\delta$ on the contour $\eps^{-1/2}\mathcal C^\delta_1$, the second term in the exponent in the integrand is bounded by $c_3(1+\delta|v|^3)$.
If we parametrize now the contour as $v = re^{\pm \I \pi / 4}$ for $r\in[0,\ep^{-1/2}\gamma\sigma\delta]$, then in terms of $r$ the integrand is bounded by 
\[(1+r^4)e^{-\frac{\ft}{3\sqrt{2}}r^3 + \frac{1}{\sqrt2} (u - \zeta) r + c_3(1+\frac{\delta}{\sqrt2}r^3)}.\]
Taking $\delta$ small enough so that $-\frac{\ft}{3} + c_3\delta<0$ ensures that this expression is integrable on $r \in [0, \infty)$. 
Then the integral in \eqref{eq:S-convergence-3} is absolutely bounded by a constant which is independent of $\eps$, and thus the whole expression vanishes as $\eps \to 0$ thanks to the prefactor $\eps^{1/2}$.

It remains to analyze the first integral in \eqref{eq:S-convergence-2}. 
Letting $\langle$ denote a contour formed by rays going off the origin at angles $\pm\pi/4$, we can write this integral as
\begin{equation}
\textstyle \frac{1}{2\pi\I}\int_{\langle}\d v\, e^{\frac{\ft}{3}v^3 + \fx v^2 + (u - \zeta )v} - \frac{1}{2\pi\I}\int_{(\langle) \setminus (\eps^{-1/2}\gamma\sigma C^\delta_1)}\d v\, e^{\frac{\ft}{3}v^3 + \fx v^2 + (u - \zeta )v}.
\end{equation}
The first term equals $\fT_{-\ft,\fx}(\zeta,u)$ (see \eqref{eq:fTadjoint} and \eqref{eq:FT2}) and the second one is bounded in absolute value by
\begin{equation}
\textstyle \frac{1}{2\pi}\int_{(\langle) \setminus (\eps^{-1/2}\gamma\sigma C^\delta_1)}\d v\, e^{\Re (\frac{\ft}{3}v^3 + \fx v^2 + (u - \zeta)v)}.
\end{equation}
We note that $\Re (\frac{\ft}{3}v^3)$ is strictly negative and decaying as $v \to \infty$ along the contour. Then there are $\eps_0 > 0$ and $c_4 > 0$ such that for any $\eps \in (0, \eps_0)$ the preceding integral is bounded by $e^{- c_4 \eps^{-3/2}}$.

Summarizing the preceding computations, we get 
\begin{equation}
\textstyle \left| \fT^\ep_{-\ft,\fx}(\zeta,u) - \fT_{-\ft,\fx}(\zeta,u)\right| \leq C_\ep,
\end{equation}
for $|u-\zeta| \leq M$, with $C_\ep$ depending on $\ft$, $M$, and $\delta$, and vanishing as $\eps \to 0$.
This yields the desired limit \eqref{eq:S-eps-converges}. 

\vskip2pt

\noindent\underline{Step 4: bound and convergence for $\bar\fT^{\ep}_{-\ft,-\fx}$.}\enspace
The argument is completely analogous to the one we used for the previous case.
The main source of decay in that argument comes from the term $f_3(v)=\frac{(\gamma\sigma)^3}3v^3+E_3(v)$ in the exponent evaluated at $v$ of the form $v=r e^{\I\pi/4}$ and the fact that $\Re((r e^{\I\pi/4})^3)=-\frac1{\sqrt2}r^3$.
For $\bar\fT^{\ep}_{\ft,-\fx}$ the term $f_3(v)$ is replaced by $\bar f_3(v)=-f_3(-v)=\frac{(\gamma\sigma)^3}3v^3-E_3(-v)$, so the same decay is available to us.
The contour now has $\phi$ either close to $\pi/4$ or $\pi/2$, but in both cases we have $\Re((r e^{\I\pi/4})^3)=-c\tts r^3$ for some $c>0$, so the argument goes through with no essential change.
We omit the remaining details.
\end{proof}

\vs

\noindent{\bf Acknowledgements.}
The authors thank Yuchen Liao and Nikos Zygouras for discussions related to our results and the article \cite{bisiLiaoSaenzZigouras}.
KM was partially supported by NSF grant DMS-1953859 (transferred to DMS-2321493). 
DR was supported by Centro de Modelamiento Matem\'{a}tico Basal Funds FB210005 from ANID-Chile and by Fondecyt Grants 1201914 and 1241974.

\vs

\printbibliography[heading=apa]

\end{document}